\newtheorem{thm}{Theorem}[section]
\newtheorem{lemma}[thm]{Lemma}
\newtheorem{prop}[thm]{Proposition}
\theoremstyle{definition}
\theoremstyle{remark}
\newtheorem{rmk}[thm]{Remark}
\newcommand{\N}{\mathbb{N}}
\newcommand{\Z}{\mathbb{Z}}
\newcommand{\bdry}{\partial}
\title{Incompressible surfaces in handlebodies and boundary reducible 3-manifolds}
\author{Jo\~{a}o Miguel Nogueira\footnote{Author partially supported by a Fulbright PhD grant and Calouste Gulbenkian Foundation PhD grant 79084.}\; and Henry Segerman\footnote{Author partially supported by an NSF RTG postdoc.}} 
\date{}
\begin{document}

\maketitle

\begin{abstract}
We study the existence of incompressible embeddings of surfaces into the genus two handlebody. We show that for every compact surface with boundary, orientable or not, there is an incompressible embedding of the surface into the genus two handlebody. In the orientable case the embedding can be either separating or non-separating. We also consider the case in which the genus two handlebody is replaced by an orientable $3$-manifold with a compressible boundary component of genus greater than or equal to two. 
\end{abstract}

\section{Introduction}

In this paper we study the existence of incompressible, but generally boundary compressible, embeddings of compact surfaces with boundary in a boundary reducible 3-manifold\footnote{A boundary reducible 3-manifold is a 3-manifold with a compressible boundary component.} with a compressible boundary component of genus greater than or equal to two. In particular we study the case in which the 3-manifold is a handlebody of genus two, which we denote by $H_2$.\footnote{Note that the disk is the only surface with an incompressible and boundary incompressible embedding in a handlebody.} This naturally extends to the case of a handlebody of genus greater than two.\\

This type of question was first studied by Jaco \cite{Jaco} who proved that there is a non-separating, incompressible embedding of an orientable surface of any genus (and one or two boundary components) in $H_2$. Jaco then asked whether there exist separating, incompressible embeddings of surfaces with arbitrarily high genus in $H_2$. This question was answered in the affirmative independently by Eudave-Mu\~noz \cite{Eudave}, Howards \cite{Howards} and  Qiu \cite{Qiu 1}.\\                

We extend Jaco's, Qiu's, Howards' and Eudave-Mu\~noz's results by proving that for any given compact surface with boundary there is an incompressible embedding in a handlebody of genus greater than or equal to two. (That is, for a compact surface of any genus and any number of boundary components greater than or equal to one there is such an embedding.) If the surface is orientable we can make this embedding either non-separating or separating. If the surface is non-orientable we can only expect to get a non-separating embedding as the handlebody is orientable. Some examples of the incompressible embeddings that we construct can be seen in figures  \ref{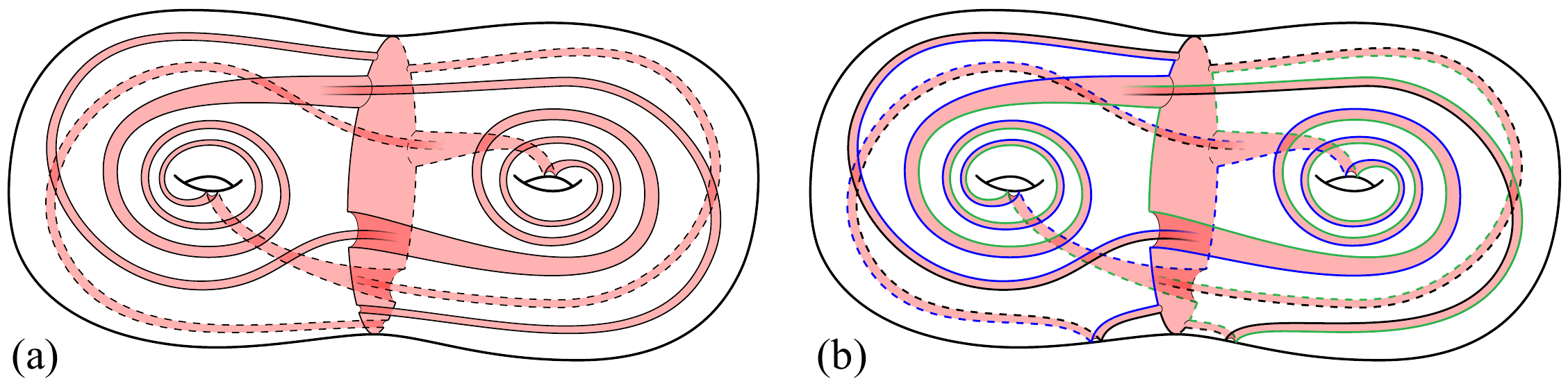} and \ref{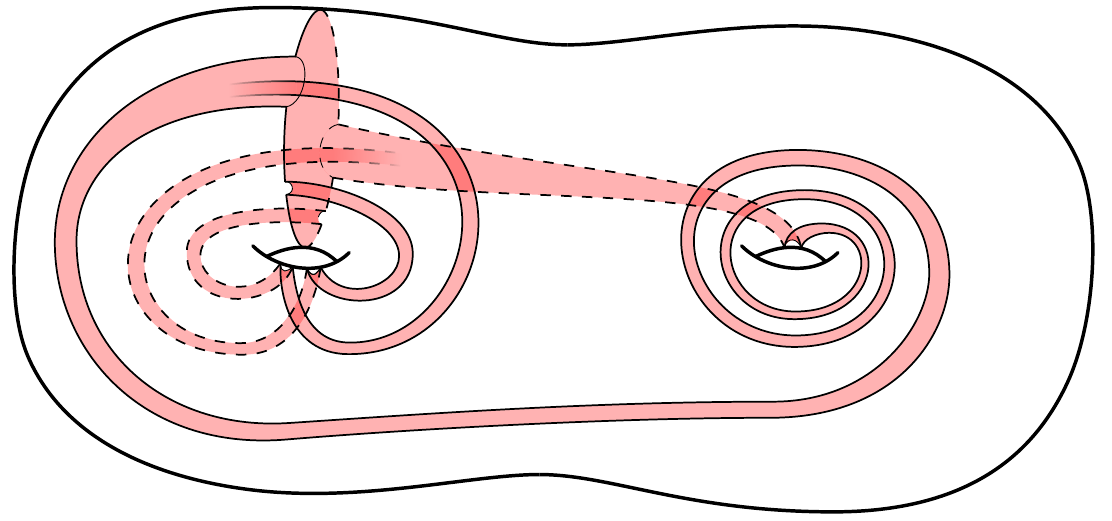}.

\begin{figure}[htb]
\centering
\includegraphics[width=\textwidth]{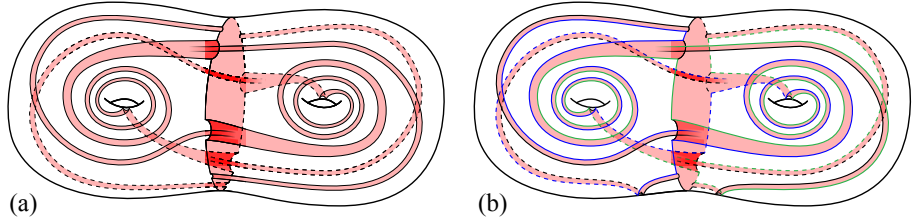}
\caption{Separating incompressible embeddings in $H_2$ of (a) a surface with genus two and one boundary component and (b) a torus with three boundary components. For clarity, when one ``band'' of the surface is nested within another we only draw the inner band as it enters and exits the tube formed between the outer band and $\bdry H_2$.} 
\label{sep_examples.pdf}
\end{figure}

\begin{figure}[htb]
\centering
\includegraphics[width=0.5\textwidth]{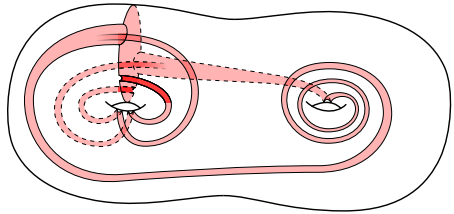}
\caption{Klein bottle with one boundary component incompressibly embedded in $H_2$.} 
\label{non_or_2_1.pdf}
\end{figure}

We make a further extension in section \ref{b-reducible manifold}, proving that we have similar incompressible embeddings in a 3-manifold with a compressible boundary component of genus two or greater. With the same conditions on the 3-manifold, unpublished work of Qiu \cite{Qiu 2} proves the existence of separating, incompressible embeddings of compact surfaces with any genus (and one or two boundary components).\\

In this paper we study the property of a surface having a $\pi_1$-injective embedding, which implies incompressibility.\\

In section \ref{non-separating case} we prove the following result:

\begin{thm}\label{all surfaces in H2}
For each compact surface $S$ with boundary there is a non-separating $\pi_1$-injective embedding of $S$ in $H_2$.
\end{thm}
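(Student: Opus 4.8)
The plan is to realize $H_2$ concretely, build $S$ from a single disk together with $n=\operatorname{rk}\pi_1(S)$ embedded bands, and choose the band pattern so carefully that on the one hand the bands can be routed disjointly and embeddedly through $H_2$, while on the other hand the loops of $S$ running over the bands are sent to a free generating set of a subgroup of $\pi_1(H_2)\cong F_2$. Concretely, first I would fix a model of $H_2$ as a $0$-handle $B$ with two $1$-handles attached, so that $\pi_1(H_2)=\langle x,y\rangle$ is free of rank two with $x,y$ the cores of the $1$-handles, and fix a handle decomposition of $S=S_{g,b}$ with one $0$-handle $\Delta$ and $n=2g+b-1$ bands $\beta_1,\dots,\beta_n$. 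I would embed $\Delta$ as a small disk near $\bdry H_2$ and then route each band $\beta_i$ through $H_2$, crossing the co-core disks of the two $1$-handles in a prescribed pattern, so that the based loop $\ell_i\subset S$ crossing $\beta_i$ is sent to a prescribed word $w_i\in F_2$.

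The delicate point — and this is where the nested bands of Figure \ref{sep_examples.pdf} come in — is that the bands must be pairwise disjoint, which severely constrains how much each band can wind around the handles of a \emph{fixed} genus two handlebody. I would handle this by letting a band that must wind many times run inside the thin tube lying between an already placed ``outer'' band and $\bdry H_2$, so that windings belonging to different bands never collide; one then has to check that this nesting can be organised consistently for every pair $(g,b)$. I would choose the words $w_i$ to be long, suitably interleaved words in $x$ and $y$ for which no cyclic reduction produces cancellation between adjacent $w_i^{\pm1}$, so that $\{w_1,\dots,w_n\}$ is a free generating set of the subgroup it generates; this is a Stallings-folding (or ping-pong) verification on the explicit words.

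Given the construction, $\pi_1$-injectivity follows by passing to spines: $S$ deformation retracts onto a wedge of $n$ circles $\Gamma_S$ (one per band), $H_2$ deformation retracts onto the rose $R_2$ on $x,y$, and composing the embedding with the retraction sends $\Gamma_S$ to a graph map $\Gamma_S\to R_2$ reading off the $w_i$; since a homomorphism of free groups is injective exactly when it carries a free basis to a free generating set, the choice of the $w_i$ makes $i_*\colon\pi_1(S)\to\pi_1(H_2)$ injective, hence $S$ incompressible. Non-separation I would get either from a homological computation of $[S]\in H_2(H_2,\bdry H_2;\Z/2)$ (arranging the routing so $S$ meets one $1$-handle core an odd number of times) or, equivalently, by exhibiting a properly embedded arc meeting $S$ transversally in one point, which the explicit routing makes easy to ensure. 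For non-orientable $S$ I would run the same construction with one band attached with a half-twist (or use the standard band model of the non-orientable surface): this alters neither the disjointness of the bands nor the induced map on $\pi_1$, and such a surface is automatically $1$-sided and hence non-separating, so only the $\pi_1$-injectivity statement, already established, is needed.

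The main obstacle is the balancing act in the middle step. It is easy to pick words that freely generate, and easy to embed bands realising short words, but realising a whole freely-generating family of words by pairwise disjoint embedded bands in a handlebody whose genus does not grow with $n$ is precisely what forces the nesting trick; making that trick work uniformly in $(g,b)$, while simultaneously keeping track of how the fattening of the spine closes up to a surface with exactly the right genus, number of boundary components, and orientability type, is where the real work lies.
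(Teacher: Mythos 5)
Your proposal is essentially the paper's own approach: a single non-separating disk with nested bands in a collar of $\bdry H_2$, chosen so the band loops map to words in $F_2=\langle x,y\rangle$ that freely generate (verified by a cancellation argument on explicit words), with non-separation coming from the non-separating starting disk and one-sidedness in the non-orientable case. The ``balancing act'' you correctly identify as the real work --- organising the nesting uniformly in $(g,b)$ via a ``changeover'' pattern, computing genus and boundary count band by band, and writing down the words so the cancellation lemma applies --- is precisely the content the paper supplies in sections 2.1--2.2.
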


We prove in section \ref{separating case} that when $S$ is orientable there is also a separating embedding:
 
 \begin{thm}\label{sep orientable in H2}
 For each compact, orientable surface $S$ with boundary there is a separating $\pi_1$-injective embedding of $S$ in $H_2$. 
 \end{thm}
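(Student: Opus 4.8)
The plan is to reduce $\pi_1$-injectivity to a cut-and-paste argument and then to exhibit an explicit family of separating surfaces. For the reduction: a two-sided properly embedded surface which is not a disk is $\pi_1$-injective if and only if it admits no compressing disk (loop theorem), and a disk is $\pi_1$-injective trivially. If $S$ separates $H_2$ into pieces $A$ and $B$ with $A\cap B=S$, then any compressing disk for $S$ lies entirely in $A$ or in $B$, so $S$ is incompressible in $H_2$ iff $S$, viewed as a subsurface of $\partial A$ and of $\partial B$, is incompressible there, iff $\pi_1 S\to\pi_1 A$ and $\pi_1 S\to\pi_1 B$ are both injective; this is also visible through $\pi_1 H_2=\pi_1 A\ast_{\pi_1 S}\pi_1 B$, since edge groups inject into amalgams. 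One might first hope to derive Theorem~\ref{sep orientable in H2} from Theorem~\ref{all surfaces in H2} by doubling: take a non-separating $\pi_1$-injective surface and a parallel copy (their union separates $H_2$) and tube the two copies together. This runs into two problems: a parallel double has even genus, while we must also realize odd genus, and, more seriously, a tube run through the product region between the two copies creates a compressing disk on that side. So I would instead build $S$ directly.

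For the construction I would fix a complete meridian disk system $E_1,E_2$ of $H_2$ (cutting $H_2$ into a ball) and assemble $S$ from ``bands'' running over the two one-handles of $H_2$, nested inside one another as in Figure~\ref{sep_examples.pdf}. The numbers of bands over each handle, together with the nesting pattern, are free parameters that control the genus and the number of boundary components of $S$, and varying them should realize every orientable type with at least one boundary component. To force $S$ to be separating I would include a ``mirrored'' sub-family of bands, arranged so that the multicurve $\partial S$ is null-homologous mod $2$ in $\partial H_2$; by the long exact homology sequence of $(H_2,\partial H_2)$ with $\Z/2$ coefficients this is exactly the condition that $S$ separate $H_2$, and it is also the reason odd genus presents no obstruction here.

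It then remains to show $S$ has no compressing disk on either side, which is the technical heart. I would isotope $S$ so that $S\cap(E_1\cup E_2)$ consists of arcs and circles, minimal in number within the isotopy class of $S$; remove closed intersection curves by an innermost-disk argument using incompressibility of the $E_i$; and then argue by contradiction. Given a compressing disk $D$, take an arc of $D\cap(E_1\cup E_2)$ outermost in $D$, cutting off a subdisk $D'$ bounded by one arc on $S$ and one on some $E_i$; analyzing how $D'$ sits in the ball $H_2\setminus N(E_1\cup E_2)$ with respect to the band structure of $S$ there, one shows such a $D'$ either yields an isotopy reducing $|S\cap(E_1\cup E_2)|$ or forces $\partial D$ to be inessential in $S$, a contradiction. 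The main obstacle is precisely this last point: the nesting of the bands (and of the mirrored family) must be designed so that no nontrivial outermost subdisk of this kind can occur, which is a delicate combinatorial match between the band pattern and the possible arcs of intersection. Finally I would handle the low-complexity base cases (genus $0$, and one or two boundary components) by hand, some of which overlap with the separating examples of Eudave-Mu\~noz, Howards and Qiu.
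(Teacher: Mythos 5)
There is a genuine gap: you have written a plan whose two essential steps are both left undone, and you say so yourself. First, the construction is underspecified. ``Numbers of bands over each handle, together with the nesting pattern, are free parameters that \emph{should} realize every orientable type'' is not a proof; realizing an arbitrary pair $(g,b)$ with $b\geq 1$ requires a concrete band pattern together with a count of how each added band changes $|\bdry S|$ (this depends on whether the two ends of a band land on the same or different boundary components, cf.\ Lemma \ref{adding bands}), and the paper has to engineer a specific ``changeover'' in the nesting precisely to switch from adding genus to adding boundary components. Second, and more seriously, the incompressibility argument is not carried out. The outermost-arc analysis against a meridian system $E_1\cup E_2$ does not work for an arbitrary nested band pattern --- a band whose core is nullhomotopic in $H_2$ produces a compressible surface, so the claim ``such a $D'$ either reduces $|S\cap(E_1\cup E_2)|$ or forces $\bdry D$ to be inessential in $S$'' can only be true for a carefully designed pattern, and verifying it is exactly the content of the theorem. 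You name this as ``the main obstacle'' and ``a delicate combinatorial match'' without resolving it, so the proposal does not constitute a proof.

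For comparison, the paper avoids the cut-and-paste analysis entirely: it writes down the images $i_*(\alpha_i)$, $i_*(\beta_i)$ of the band generators as explicit words in the free group $\pi_1(H_2)=\langle x,y\rangle$, each of the form $v\,x^3\,v'$ or $u\,y^3\,u'$ with controlled prefixes and suffixes of strictly increasing length, and proves a cancellation lemma (Lemma \ref{word lemma 2}) showing that the central cube can never be fully cancelled in any reduced product of these words and their inverses; injectivity of $i_*$ follows directly, with no appeal to the loop theorem. Your preliminary reductions (loop theorem for two-sided surfaces, incompressibility on each side, the $\Z/2$-homology criterion for separation, and the observation that naively doubling a non-separating surface fails) are all correct and sensible, but they only set the stage for the part of the argument that is missing.
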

 
 In section \ref{b-reducible manifold}, we extend these results to embeddings in an arbitrary boundary reducible $3$-manifold $M$ with a compressible boundary component of genus greater than or equal to two. We say that such a boundary component $B$ is of {\bf type \emph{ns}} if there is a compression disk $D$ for $B$ such that $\bdry D$  does not separate $B$. We obtain the following general theorems:
 
\begin{thm}\label{all surfaces in M}
Suppose $M$ has a compressible boundary component of genus greater than or equal to two, of type \emph{ns}. Then
for each compact surface $S$ with boundary there is a non-separating $\pi_1$-injective embedding of $S$ in $M$. Furthermore, if $S$ is orientable there is also a separating $\pi_1$-injective embedding of $S$ in $M$.
\end{thm}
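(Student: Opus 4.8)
The plan is to deduce the general statement from the handlebody case, Theorems~\ref{all surfaces in H2} and \ref{sep orientable in H2}, by embedding in $M$ a genus two handlebody $W$ that is $\pi_1$-injective and meets $\partial M$ in a controlled subsurface of the given boundary component $B$, and then transporting to $M$ the surfaces produced by those theorems inside $W$. If $W \subset M$ is such that $R := W \cap \partial M$ is a compact subsurface of $B$ and of $\partial W$, the complementary subsurface $R' := \overline{\partial W \setminus R}$ is pushed into the interior of $M$, and $\pi_1 W \to \pi_1 M$ is injective, then any $\pi_1$-injective embedding $S \hookrightarrow W$ with $\partial S$ in the interior of $R$ is properly embedded in $M$ and $\pi_1$-injective there, since $\pi_1 S \hookrightarrow \pi_1 W \hookrightarrow \pi_1 M$.

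To build $W$ I would use the type \emph{ns} hypothesis. Let $D$ be a compression disk for $B$ with $c := \partial D$ non-separating in $B$. Since $B$ is a compressible surface of genus at least two, the half-lives-half-dies principle shows that the image of $H_1(B;\mathbb{Q}) \to H_1(M;\mathbb{Q})$ has rank at least two, so we may choose simple closed curves $c_1,c_2$ on $B$, disjoint from one another and from $c$, whose classes $[c_1],[c_2]$ are linearly independent in $H_1(M;\mathbb{Q})$. Let $F \subset B$ be a four-holed sphere that is a regular neighborhood of $c\cup c_1\cup c_2$ together with two joining arcs, with $c$ as one of its boundary curves, and set $W := (F\times I) \cup N(D)$, where $F=F\times\{0\}\subset\partial M$ and (after a small isotopy of $D$) $N(D)$ meets $F\times I$ in a collar of $c\times\{0\}$. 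Then $F\times I$ is a genus three handlebody, $c\times\{0\}$ is non-separating on $\partial(F\times I)$ and nontrivial in $\pi_1(F\times I)$ (it is a cuff of $F$), so attaching the two-handle $N(D)$ along it produces a genus two handlebody $W$; this is the step where type \emph{ns} is used, to have $D$ available with $c$ non-separating so that $W$ is a handlebody rather than a more complicated compression body. The handlebody $W$ deformation retracts onto a regular neighborhood of $c_1\cup c_2$ with a joining arc, an incompressible pair of pants in $M$ (incompressible because each of its three cuffs, of classes $[c_1]$, $[c_2]$, $[c_1]\pm[c_2]$, is homologically nontrivial and hence bounds no disk in $M$), and one checks that under this retraction $\pi_1 W\to\pi_1 M$ is injective. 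Finally $R=W\cap\partial M$ is the planar subsurface of $B$ obtained from $F$ by pushing the cuff $c$ outward.

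Now apply Theorem~\ref{all surfaces in H2}, or Theorem~\ref{sep orientable in H2} when $S$ is orientable, to get a non-separating, respectively separating, $\pi_1$-injective embedding of $S$ into $W$, isotoped so that $\partial S$ lies in the interior of $R$; as noted above this is a $\pi_1$-injective embedding of $S$ in $M$, properly embedded since $\partial S\subset R\subset\partial M$. For the non-separating conclusion, a loop in $W$ meeting $S$ transversely in one point is such a loop in $M$, so $S$ is non-separating in $M$; and when $S$ is non-orientable it is one-sided in the orientable $M$ and hence automatically non-separating. For the separating conclusion, $S$ separates $W$ into $W_1\sqcup W_2$, and $R'$ is disjoint from $\partial S$; if the construction is arranged so that $R'$ lies entirely in $\partial W\cap W_2$, then $\overline{M\setminus W}$ is glued to $W$ along a subsurface lying on a single side of $S$, so that $M\setminus S = W_1\sqcup\bigl(W_2\cup\overline{M\setminus W}\bigr)$ is disconnected and $S$ separates $M$.

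The main obstacle I anticipate is the matching condition at the interface of the two halves: arranging the embeddings given by the handlebody theorems so that $\partial S$ can be confined to the interior of the prescribed subsurface $R$ of $\partial W$ and, in the orientable separating case, so that the complementary subsurface $R'$ lies on one side of $S$. Resolving this seems to require either choosing $W$ (equivalently $c_1,c_2,D$ and the subsurface $F$) to match where the constructions of Sections~\ref{non-separating case} and \ref{separating case} naturally place $\partial S$, or revisiting those constructions in order to confine $\partial S$ to a prescribed planar subsurface of $\partial H_2$ that avoids a prescribed separating curve.
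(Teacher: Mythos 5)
There is a genuine gap --- in fact two --- and the first one kills the construction of $W$ outright. Half-lives-half-dies does show that the image of $H_1(B;\mathbb{Q})\to H_1(M;\mathbb{Q})$ has rank at least $g\geq 2$, but the classes of simple closed curves on $B$ disjoint from $c=\partial D$ all lie in $[c]^{\perp}\subset H_1(B;\mathbb{Q})$, which has dimension $2g-1$ and contains the isotropic kernel $K$ of $H_1(B;\mathbb{Q})\to H_1(M;\mathbb{Q})$ (since $[c]\in K$ and $K\subseteq K^{\perp}\subseteq [c]^{\perp}$). Hence the image of $[c]^{\perp}$ in $H_1(M;\mathbb{Q})$ has rank at least $g-1$ but possibly no more. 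When $g=2$ and $\dim K=2$ --- which is forced whenever $\partial M=B$ --- that image is one-dimensional, so \emph{no} two curves disjoint from $c$ have linearly independent images. This already happens for $M=H_2$ itself: with $c$ a meridian of one handle, every curve on $\partial H_2$ disjoint from $c$ maps into the rank-one subspace spanned by the core of the other handle. So the curves $c_1,c_2$, and with them the handlebody $W$, need not exist. (You could weaken the requirement to the three cuffs of the pair of pants being non-nullhomotopic, which by the loop theorem still gives $\pi_1$-injectivity, but then the existence of such a pair of pants disjoint from $c$ is a separate claim you would have to prove.)

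The second gap is the ``matching condition'' you flag at the end, and it is not a technicality but the crux. The surfaces of sections \ref{non-separating case} and \ref{separating case} consist of bands parallel to $\partial H_2$ wrapping many times around both handles; by construction they avoid only two vertical arcs $x_i\times[0,1]$ in a collar of $\partial H_2$, not a positive-genus-complement subsurface. There is no evident isotopy confining $\partial S$ and the band attachments to your four-holed sphere $R$, whose complement $R'$ in $\partial W$ is far larger than two disks. The paper's proof is engineered around exactly this point: instead of a $\pi_1$-injective handlebody it takes a maximal compression body $C$ of $B$ and extracts, via Lemma \ref{compression body decomposition}, a piece $H$ which is $H_2$ with one or both solid-torus handles replaced by $(\text{punctured torus})\times[0,1]$ pieces; the constructions of Theorems \ref{all surfaces in H2} and \ref{sep orientable in H2} go through verbatim inside $H$ precisely because they only need to miss those two vertical arcs, and injectivity into $\pi_1(M)$ is then obtained algebraically, by retracting $\pi_1(C)$ onto $\pi_1(H_2)$ (Proposition \ref{pi1 injective compression body}) and using incompressibility of $\partial_-C$ (Lemma \ref{comp body injects into M}), rather than by exhibiting a $\pi_1$-injective handlebody in $M$.
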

 
 \begin{thm}\label{sep orientable in M}
Suppose $M$ has a compressible boundary component, $B$,  of genus greater than or equal to two. Then for each compact, orientable surface $S$ with boundary there is a $\pi_1$-injective embedding of $S$ in $M$. Furthermore, if $B$ has a (non-)separating compressing disk in $M$ then $S$ has a (resp., non-)separating $\pi_1$-injective embedding in $M$.
 \end{thm}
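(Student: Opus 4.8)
The plan is to reduce Theorem~\ref{sep orientable in M} to the handlebody case, Theorems~\ref{all surfaces in H2} and~\ref{sep orientable in H2}, by building a suitably embedded copy of $H_2$ in $M$. Fix a compressing disk $D$ for $B$; for the refined conclusion choose $D$ to be (non-)separating in $M$ as required. I will produce an embedded genus-two handlebody $V\subset M$ with: (i) $V\cap\partial M$ a compact subsurface $F\subset B$ large enough to contain $\partial S$ in its interior; (ii) each component of $F':=\overline{\partial V\setminus F}$ is $\pi_1$-injective in $V$; and (iii) $F'$ incompressible in $\overline{M\setminus V}$. Granting (i)--(iii), $F'$ is a properly embedded incompressible surface in $M$ along which $M$ splits as $V\cup_{F'}\overline{M\setminus V}$, so van Kampen exhibits $\pi_1 M$ as an amalgamated product (iterated, if $F'$ is disconnected) whose vertex groups, in particular $\pi_1 V$, inject. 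Applying Theorem~\ref{sep orientable in H2} (resp.\ Theorem~\ref{all surfaces in H2}) inside $V$ then gives a separating (resp.\ non-separating) $\pi_1$-injective copy of $S$ in $V$ with $\partial S\subset\operatorname{int}F$, and composing inclusions makes $S$ $\pi_1$-injective in $M$. Separation is bookkeeping: if $S$ is placed so that $F'$ lies on one side of $S$ in $V$, then $M\setminus S$ is the disjoint union of one side of $V\setminus S$ with the union of the other side and $\overline{M\setminus V}$, so $S$ separates $M$; if $S$ is non-separating in $V$ then $V\setminus S$ is connected, hence so is $M\setminus S$.

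To build $V$, take $V=\nu\bigl(D\cup\gamma_1\cup\gamma_2\bigr)$, a regular neighbourhood in $M$ of $D$ together with two properly embedded arcs $\gamma_1,\gamma_2$ in $B$ with endpoints on $\partial D$, chosen generically so that $V$ is a genus-two handlebody and $F=V\cap\partial M$, $F'=\overline{\partial V\setminus F}$ are the expected subsurfaces; when $D$ separates $M$ the arcs are placed one to each side of $D$, so that $\overline{M\setminus V}$ and $F'$ split accordingly. Property (ii) and the side condition on $S$ used above are arranged by choosing the $\gamma_i$ to run essentially and to meet $S$ trivially, and (i) holds because the surfaces produced by Theorems~\ref{all surfaces in H2} and~\ref{sep orientable in H2} can be taken, by the explicit form of those constructions, with $\partial S$ contained in an arbitrarily small subsurface of $\partial H_2$ whose complement one may take to be $F'$.

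The main obstacle is (iii): making $F'$ incompressible in $\overline{M\setminus V}$. A compressing disk $E$ for $F'$ there would, after standard innermost-disk and outermost-arc modifications of $E$ against $D$ and against a system of meridian disks of $V$ dual to $\gamma_1,\gamma_2$, yield either a compression of $B$ in $M$ of strictly smaller complexity than $D$ or a strictly simpler admissible arc system. So I would fix at the outset a pair $\bigl(D,\{\gamma_1,\gamma_2\}\bigr)$ minimizing an appropriate complexity and show that for a minimal choice no such $E$ exists. I expect the technical heart of the proof to be making this surgery-and-minimality argument precise while keeping (i), (ii) and the side condition on $S$ intact under the minimization; the rest is routine. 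As a consistency check, when $B$ is of type \emph{ns} one may instead quote Theorem~\ref{all surfaces in M}, so the genuinely new case is the one in which every compressing disk of $B$ separates $B$.
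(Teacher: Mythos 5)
Your overall strategy---locate a genus-two piece near $B$, run the explicit band constructions there, and use a splitting of $M$ along the frontier of that piece to promote $\pi_1$-injectivity from the piece to $M$---is the same in spirit as the paper's. But your implementation has a genuine gap exactly where you flag the ``technical heart'': condition (iii), the incompressibility of $F'$ in $\overline{M\setminus V}$ (and, with it, the injectivity of $\pi_1(V)\to\pi_1(M)$). The minimality argument you sketch does not work as described: a compressing disk $E$ for a component of $F'$ in $\overline{M\setminus V}$ has $\partial E$ on the properly embedded surface $F'$, not on $B$, so it does not directly yield ``a compression of $B$ of strictly smaller complexity than $D$''; at best its boundary is freely homotopic to a component of $\partial F\subset B$, and the resulting compressing disk for $B$ lives \emph{outside} $V$. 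Because you have locked $V$ at genus two, you have no way to absorb such extra compressions into $V$, so no complexity decrease is forced and the induction has nowhere to go. Condition (ii) also needs care: $F'$ is only a union of annuli for the ``generic'' configurations you describe, and if a component of $F'$ had positive genus its fundamental group could not inject into the rank-two free group $\pi_1(V)$ at all; even in the annular case you must argue the cores are essential on both sides before van Kampen applies. Finally, note that your separation bookkeeping would yield non-separating embeddings in \emph{every} such $M$, with no hypothesis on $D$ at all---strictly stronger than the theorem---which should prompt you to re-examine whether the required $V$ really always exists.

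The paper avoids all of this with two devices. First, instead of a genus-two handlebody it takes a \emph{maximal} compression body $C$ of $B$ in $M$ (built from a maximal disjoint collection of compressing disks): then $\partial_-C$ is incompressible in $\overline{M\setminus C}$ essentially by maximality, and incompressible in $C$ because $C$ is a compression body, so $\pi_1(C)\hookrightarrow\pi_1(M)$ follows from van Kampen/HNN with no minimality argument (Lemma \ref{comp body injects into M}). Second, since $C$ need not be a handlebody, it is decomposed as $C=C^*\cup_J H$ with $H$ one of $H_2$, $P_{s_1}\cup_D Q$, or $P_{s_1}\cup_D P_{s_2}$ (Lemma \ref{compression body decomposition}); the explicit surfaces of sections \ref{non-separating case} and \ref{separating case} fit inside $H$ because they live in a neighbourhood of $E\cup\partial H_2$ missing two vertical arcs. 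Crucially, injectivity of $\pi_1(S)\to\pi_1(C)$ is \emph{not} obtained by showing $\pi_1(H)\to\pi_1(C)$ injective (which would raise your problem (iii) again for $J$); instead one builds a homomorphism $f_*:\pi_1(C)\to\pi_1(H_2)$ (kill $\pi_1(\overline{C^*})$ and the annulus cores of $J$) whose composite with $\pi_1(S)\to\pi_1(C)$ is the word-combinatorially injective $i_*$ (Proposition \ref{pi1 injective compression body}). The case analysis of the theorem (which decomposition of $C$ is available, and whether the chosen $D$ separates $M$) then gives the separating/non-separating dichotomy. If you want to salvage your approach, the fix is to replace your handlebody $V$ by a maximal compression body and import the retraction trick rather than trying to prove frontier incompressibility by surgery and minimality.
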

 
 \begin{rmk}
 Consider a manifold $M$ which is the union of two copies of $T \times [0,1]$ (where $T$ denotes the torus) identified to each other along a disk in each $T \times \{0\}$. It is not hard to prove that $M$ has a compressible boundary component of genus greater than or equal to two, but no such boundary component of type \emph{ns}. Therefore, $M$ is an example of a manifold that satisfies the conditions of theorem \ref{sep orientable in M} but not of theorem \ref{all surfaces in M}.
 \end{rmk}

 In sections \ref{non-separating case} and  \ref{separating case} we use a similar standard technique to the one used by Qiu in \cite{Qiu 1}. We start with either a separating or a non-separating incompressible disk in $H_2$, and by adding certain boundary parallel bands to the disk we construct the surfaces with the desired properties. For the extension to the embedding in a 3-manifold with a compressible boundary component of genus greater than or equal to two, in section \ref{b-reducible manifold}, we first extend to compression bodies with a compressible boundary component of genus greater than or equal to two. Then, for the general case, we use a compression body in $M$ obtained from a compressible boundary component of genus greater than or equal to two.\\  
 
In this paper, for a given topological space $X$,  $|X|$ denotes the number of connected components of $X$.\\

\begin{center}
\textit {Acknowledgment}\\
\end{center}

The authors thank Cameron Gordon for helpful discussions.

\section{The non-separating case}
\label{non-separating case}
\subsection{Orientable surfaces}
\label{non-separating case orientable}  

\begin{figure}[htb]
\centering
\includegraphics[width=0.5\textwidth]{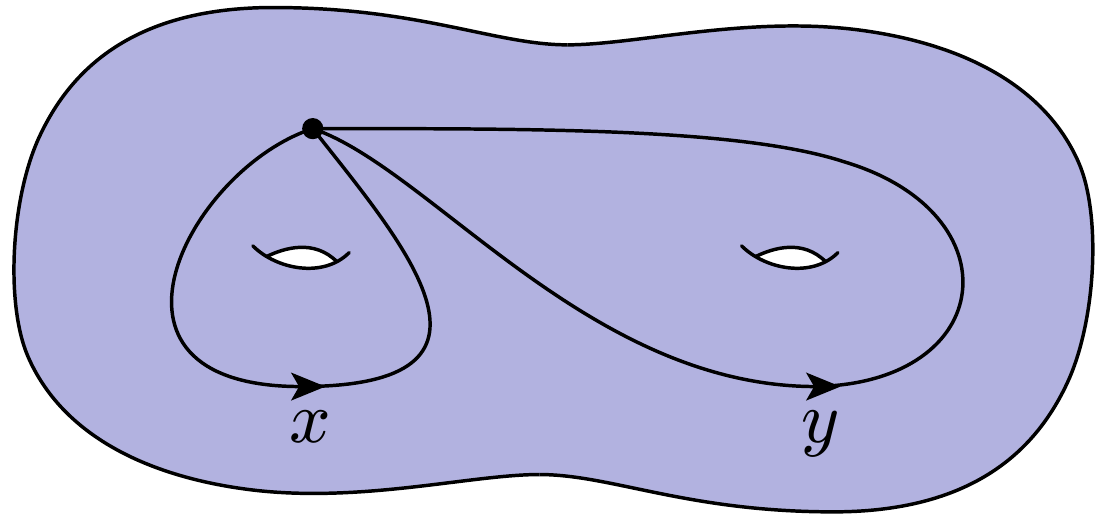}
\caption{Basepoint and generators for $\pi_1(H_2)$.} 
\label{handlebody_non_sep_pi1.pdf}
\end{figure}

The goal of this section is to construct a non-separating, $\pi_1$-injective embedding of each orientable compact surface with boundary, of genus $g$ and number $b\geq 1$ of boundary components, in $H_2$.\\  

First, fix a basepoint and generators for $\pi_1(H_2)$ as in figure \ref{handlebody_non_sep_pi1.pdf}. The surfaces we will construct consist of a single non-separating disk, embedded in the handlebody $H_2$, with various bands connecting parts of the boundary of the disk, contained within a small product neighbourhood of $\bdry H_2$. See figure \ref{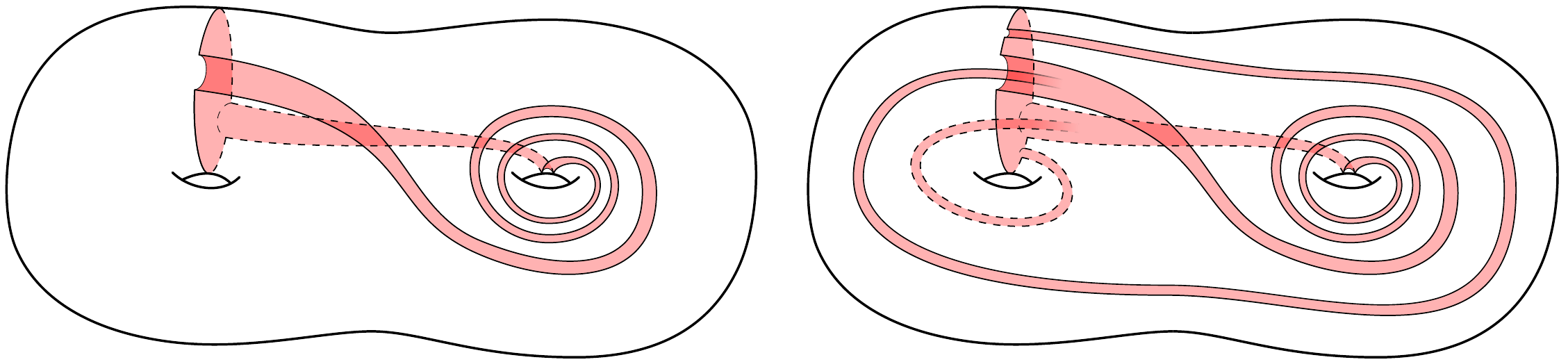}. 
\begin{figure}[htb] 
\centering
\includegraphics[width=\textwidth]{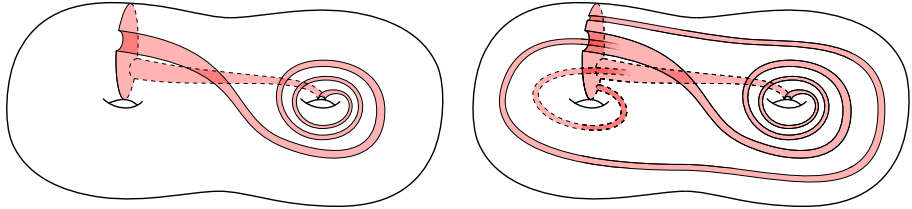}
\caption{Incompressible surfaces embedded in the handlebody of genus $2$, the first two steps. The second band added is nested within the previous one.}
\label{handlebody_non_sep_or_surface.pdf}
\end{figure}
We will add bands in two stages: In the first stage we increase the genus of the surface to the desired value $g$, with the surface having only one boundary component. In the second stage we add bands to increase the number of boundary components to $b$.\\

In the first diagram of figure \ref{handlebody_non_sep_or_surface.pdf} we see the first added band, which we refer to as $\alpha_1$. The $i$th band will be referred to as $\alpha_i$, and we will abuse notation in also referring to the obvious corresponding generator of $\pi_1$ of the surface as $\alpha_i$. In the second diagram of figure \ref{handlebody_non_sep_or_surface.pdf} we see added the band $\alpha_2$, which is nested within $\alpha_1$, in the following manner: The band $\alpha_2$ starts at the boundary of the disk, goes around the handlebody (see figure  \ref{handlebody_non_sep_or_surface.pdf})  and returns to the disk, enters the tunnel formed by $\alpha_1$  and follows it around, then exits, goes around the handlebody again and ends by joining the boundary of the disk. In general $\alpha_i$ and $\alpha_{i-1}$ will be related similarly.\\

In this example, the image of $\alpha_1$ in $\pi_1(H_2)$ is $y^3$ and the image of $\alpha_2$ is $y^{-1}x^{-1}y^3x$.\\

\begin{figure}[htb]
\centering
\includegraphics[width=\textwidth]{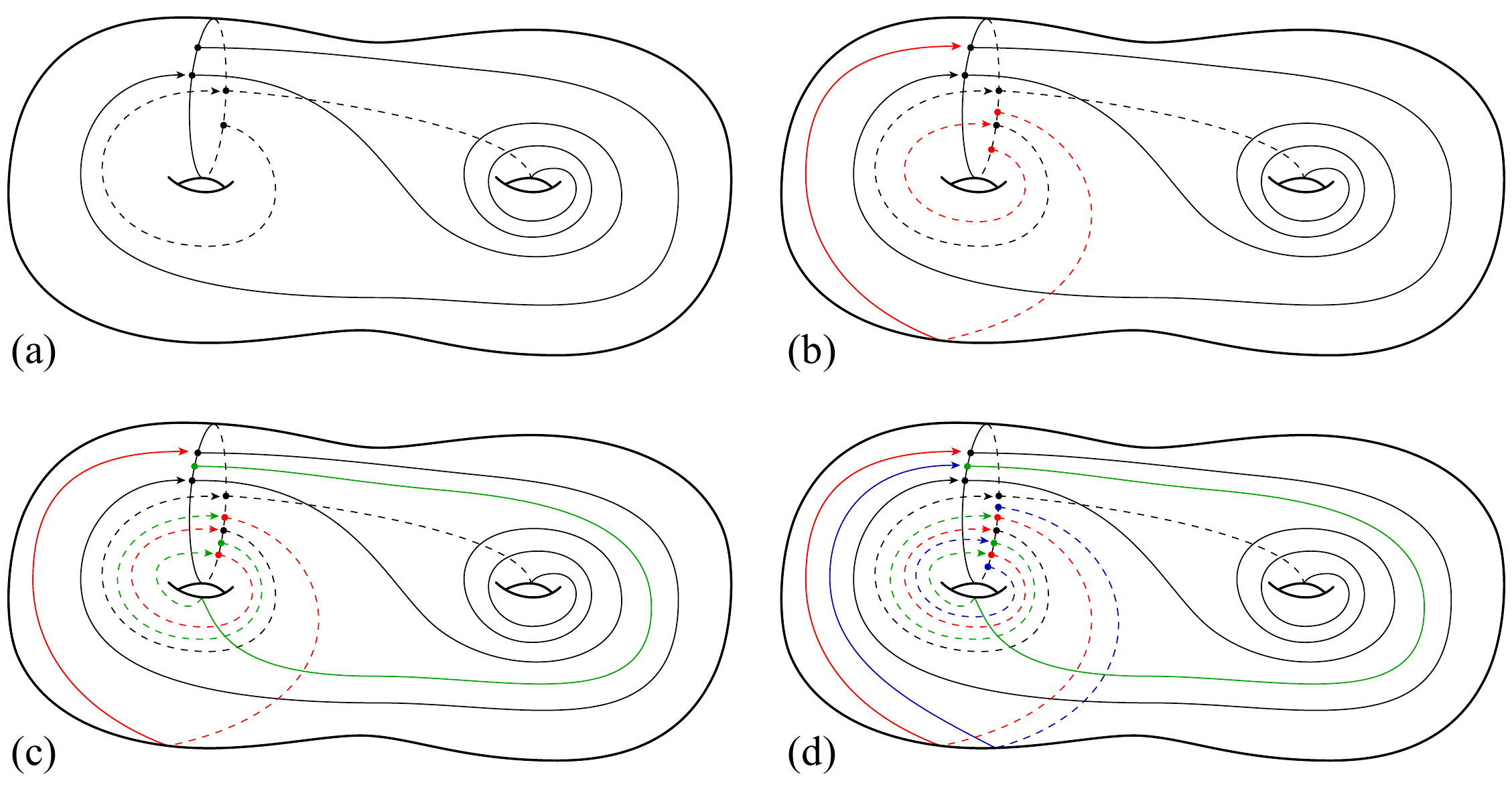}
\caption{Schematic diagrams of non-separating orientable surfaces within $H_2$.}
\label{handlebody_non_sep_or_schematic.pdf}
\end{figure}

We pass to a schematic picture in figure \ref{handlebody_non_sep_or_schematic.pdf}, where in place of a band we draw only its core curve, with a dot where the band joins the disk and an arrow where it travels inside the tunnel formed by the previous band. In figure \ref{handlebody_non_sep_or_schematic.pdf}(a) we see the same situation as in the second diagram of figure \ref{handlebody_non_sep_or_surface.pdf}. Note that there are two parts of $\alpha_2$ outside of the $\alpha_1$ tunnel, one of which corresponds to $y^{-1}x^{-1}$ (on the front side of the handlebody) and the other to $x$ (on the back side). We could, if we chose, continue with building $\alpha_3$ in the same way as $\alpha_2$. That is, one part would go around $y^{-1}x^{-1}$ before entering $\alpha_2$, and on exiting the other part would go around $x$. We could continue this indefinitely, the front side curves growing concentrically and the back side curves shrinking concentrically.\\

Instead however, in figure \ref{handlebody_non_sep_or_schematic.pdf}(b) we make what we call a {\bf changeover} in the front side curve, looping it around to the back side. The back side curve does not changeover yet. In figure \ref{handlebody_non_sep_or_schematic.pdf}(c) we continue what was the front side curve with an $x$ loop on the back side, and have no choice in this because the curves cannot cross if the surface is to be embedded. We do however make a changeover in the back side curve, looping it through the $x$ hole and around the front side of the handlebody. We refer to these two changes in path as a {\bf pair of changeovers}. In figure \ref{handlebody_non_sep_or_schematic.pdf}(d) we continue with yet another band, in this case the front side curve is the first of a new pair of changeovers. \\

\begin{figure}[htbp]
\centering
\includegraphics[width=\textwidth]{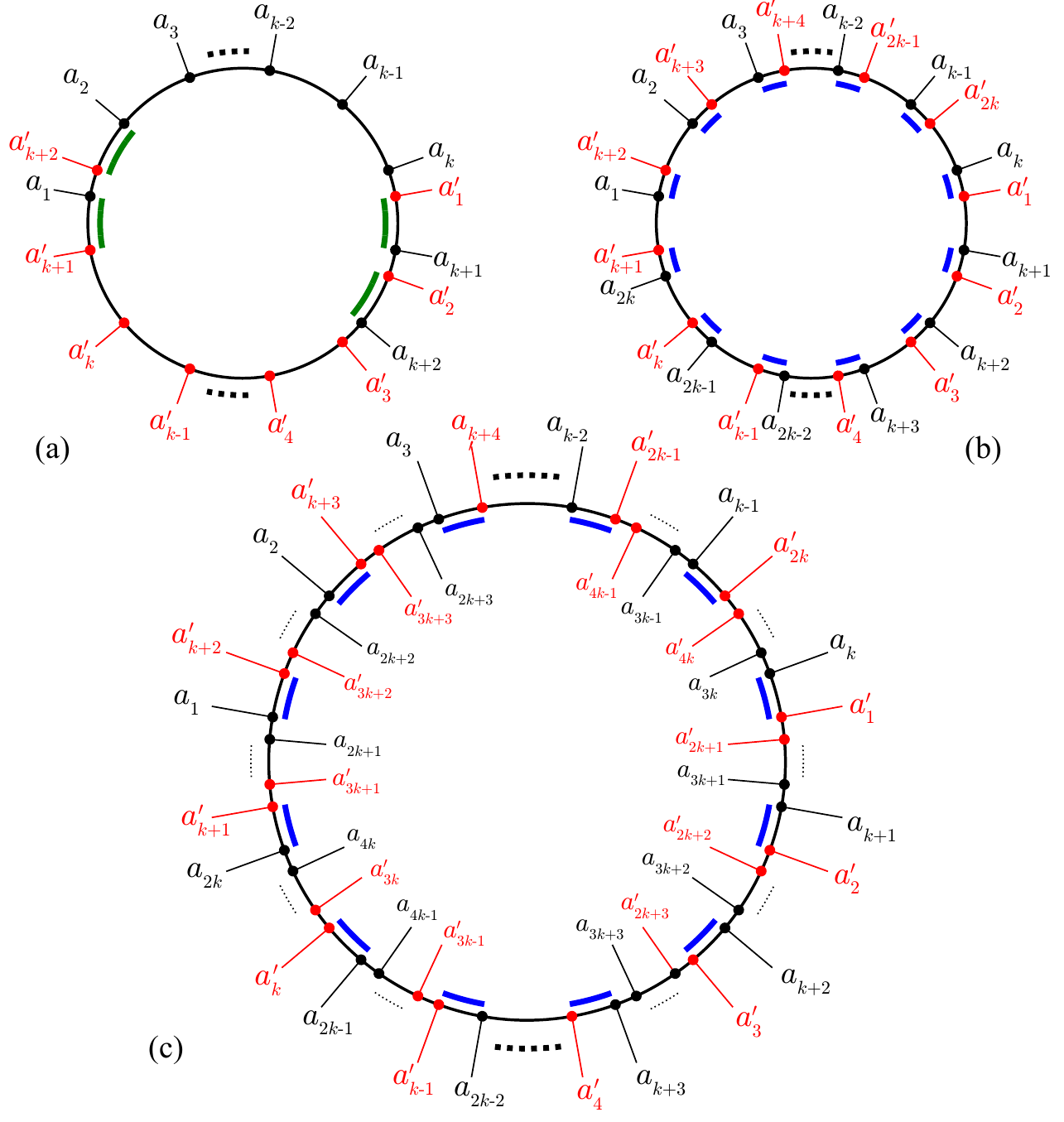}
\caption{Orderings of band ends around the disk. Figure \ref{handlebody_non_sep_or_schematic.pdf}(c) corresponds to (a) in this figure, with $k=2$.}                
\label{handlebody_non_sep_or_disk.pdf}
\end{figure}

We pass once more to a yet more schematic picture in figure \ref{handlebody_non_sep_or_disk.pdf}. Here we show only the pattern of band ends as they meet the disk. We label the ends of $\alpha_k$ as $a_k$ and $a'_k$, where $a_1$ is the end on the front side of the handlebody, $a'_1$ on the back, and in following $\alpha_i$ from $a_i$ ($a'_i$) we enter the $\alpha_{i-1}$ tunnel at $a_{i-1}$ (resp., $a'_{i-1}$).\\

In figure \ref{handlebody_non_sep_or_disk.pdf}(a) we have waited until $\alpha_{k+1}$ (for some $k\geq 2$) to make the first changeover of the first pair of changeovers, and then $\alpha_{k+2}$ for the second changeover. This is the point at which we switch from increasing genus to increasing the number of boundary components, as will be proven later in this section. After a pair of changeovers we don't have a choice of where to add the ends of the next $k-2$ bands in the boundary of the starting disk. On the band after adding these $k-2$ bands, say $\alpha_{jk+1}$ for some positive $j$, we have a choice for the ends of the band. We choose to continue with another pair of changeovers by adding $\alpha_{jk+1}$ as the first changeover and $\alpha_{jk+2}$ as the second. Again, for the next $k-2$ bands we don't have a choice of where to add its ends on the disk. We continue this process until we have added the desired number of bands. The continuation of the pattern obtained by the ends of the bands on the starting disk is as shown in figure \ref{handlebody_non_sep_or_disk.pdf}(b, c).\\

Let $S_{n,k}$ be the surface described above using $n$ bands and starting the first changeover by adding the band $\alpha_{k+1}$, for some $k\geq 2$.

\begin{lemma}\label{adding bands}
Let $S$ be a surface with boundary. Let $S'$ be the surface obtained from $S$ by adding a band $B = [0,1] \times [0,1]$, gluing the two ends $\{0\} \times [0,1]$ and $\{1\} \times [0,1]$ of $B$ to segments of $\bdry S$.
\begin{enumerate}
\item[(a)] If the two ends of $B$ are glued to different components of $\bdry S$ then $|\bdry S'| = |\bdry S| - 1$.

\item[(b)] If the two ends of $B$ are glued to the same component of $\bdry S$ and $B$ has an even number of half twists then $|\bdry S'| = |\bdry S| + 1$.

\item[(c)] If the two ends of $B$ are glued to the same component of $\bdry S$ and $B$ has an odd number of half twists then $|\bdry S'| = |\bdry S|$.\footnote{Note that the parity of the number of half twists is well defined for a band whose ends are attached to the same boundary component of $S$.}
\end{enumerate}
\end{lemma}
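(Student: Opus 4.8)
The plan is to analyze the boundary of $S'$ directly in terms of how the attaching arcs sit on $\bdry S$. Write $\bdry B$ as the union of the two gluing arcs $\{0\}\times[0,1]$, $\{1\}\times[0,1]$ and the two "free" arcs $[0,1]\times\{0\}$, $[0,1]\times\{1\}$. When $B$ is glued to $S$, the boundary $\bdry S'$ consists of the free arcs of $B$ together with the arcs of $\bdry S$ obtained by removing the two gluing segments; these arcs are joined up at the four endpoints of the two gluing segments. So the whole computation reduces to a combinatorial bookkeeping of how four endpoints on the $1$-manifold $\bdry S$ get reconnected through the band. I would set this up once and then read off the three cases.

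For case (a), the two ends of $B$ lie on two distinct circles $C_1$ and $C_2$ of $\bdry S$. Cutting $C_i$ at a gluing segment turns it into an arc, and the two free arcs of $B$ splice the resulting two arcs into a single circle, while all other components of $\bdry S$ are untouched; hence $|\bdry S'| = |\bdry S| - 1$. For cases (b) and (c), both ends of $B$ lie on a single circle $C$ of $\bdry S$. Removing the two gluing segments from $C$ leaves two arcs, $E$ and $F$. Each free arc of $B$ connects an endpoint of the pair $\{E,F\}$ on one side to an endpoint on the other side, and the key point is the combinatorics of \emph{which} endpoints get matched: going across the band without a half twist versus with a half twist reverses the pairing. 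In one case the two free arcs of $B$ join $E$ to $F$ and $F$ to $E$, producing a single circle (so $|\bdry S'| = |\bdry S|$); in the other case they join $E$ to $E$ and $F$ to $F$, producing two circles (so $|\bdry S'| = |\bdry S| + 1$). One then observes that the parity of the number of half twists determines which of these two pairings occurs, and that this parity is well defined once the band ends are on the same component (as noted in the footnote), which pins down cases (b) and (c). A clean way to phrase this: a band attached along $C$ with an even number of half twists is isotopic (rel its ends, within a collar of $\bdry S$) to one with zero half twists, and with an odd number to one with a single half twist, so it suffices to check the two model cases, e.g.\ by a picture.

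I expect the main obstacle to be purely expository rather than mathematical: making the endpoint-matching argument in cases (b) and (c) rigorous without drowning in notation, in particular being careful about orientations of the arcs and the sense in which "number of half twists mod $2$" controls the reconnection pattern. It is tempting to just draw the two pictures (zero half twists, one half twist) and declare the result evident; a slightly more careful version introduces a collar $\bdry S \times [0,1]$, pushes $B$ into it, and tracks the two arcs $E\times\{1\}$ and $F\times\{1\}$ through the $\mathbb{Z}/2$ framing of $B$. An even cheaper alternative, if one wants to avoid twists entirely, is an Euler characteristic / homology argument: $\chi(S') = \chi(S) - 1$ always since we glue a $1$-handle, and $S'$ is connected iff $S$ is (in all three cases), so in cases (b) and (c) the genus and orientability of $S'$ differ in a way that is controlled by the parity of the twist, from which the change in $|\bdry S'|$ follows; but since the lemma is stated about $|\bdry S'|$ directly, I would present the direct combinatorial argument as the main proof and relegate the $\chi$ remark to a sanity check.
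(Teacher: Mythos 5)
Your argument is correct. The paper gives no actual proof here --- its entire proof reads ``These statements are not difficult to verify'' --- and your endpoint-bookkeeping argument (cutting $\bdry S$ along the two gluing segments and tracking how the free arcs of $B$ reconnect the resulting arcs, with the parity of half twists, well defined via the framing from $\bdry S$ as in the footnote, deciding between the one-circle and two-circle pairings) is exactly the routine verification being alluded to, correctly carried out.
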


\begin{proof}
These statements are not difficult to verify.
\end{proof}

\begin{prop}\label{or sep genus and boundary}
Let $k=2g$, for some natural number $g\geq 1$. Then $S_{n,k}$ has genus $g$ and $n-2g+1$ boundary components.
\end{prop}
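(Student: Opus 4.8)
The plan is to track two quantities as the bands $\alpha_1,\dots,\alpha_n$ are attached one at a time: the number of boundary components, controlled entirely by Lemma~\ref{adding bands}, and the genus, which I will pin down once the boundary count is known. I would begin by observing that the underlying surface $S_{n,k}$ is built from a disk $D$ (so $|\bdry D| = 1$, genus $0$) by successively attaching the bands in order of index. At each stage I need to know, for the band $\alpha_i$ being attached, whether its two ends lie on the same or on different components of the boundary of the surface built so far, and if the same, the parity of the number of half-twists in $\alpha_i$. This is exactly the data Lemma~\ref{adding bands} converts into the change in $|\bdry\cdot|$.

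Next I would carry out the bookkeeping for the first stage (the first $k = 2g$ bands, i.e.\ $\alpha_1,\dots,\alpha_k$), which is the genus-increasing stage. Here I claim each new band $\alpha_i$ (for $1 \le i \le 2g$) has both ends on the single boundary component present, and is attached with an \emph{odd} number of half-twists, so that by Lemma~\ref{adding bands}(c) the boundary count stays equal to $1$ throughout. Since attaching a band with ends on the same boundary component without changing $|\bdry\cdot|$ raises the genus of an orientable surface by exactly $1$ (each such band is a nontrivial handle, as witnessed by the corresponding generator $\alpha_i$ of $\pi_1$ together with a dual curve running along the disk), after $2g$ bands the surface has genus $g$ and one boundary component. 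The parity claim should be read off directly from the schematic pictures (figures \ref{handlebody_non_sep_or_schematic.pdf} and \ref{handlebody_non_sep_or_disk.pdf}): a band and its successor nested inside its tunnel are attached to the disk in the ``linked/interleaved'' pattern shown, which is precisely the odd-half-twist configuration. I would phrase this as a short lemma-style remark about the local picture near the disk rather than a twist computation.

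Then comes the second stage, bands $\alpha_{2g+1},\dots,\alpha_n$ — the boundary-increasing stage — where I need to show each of these $n - 2g$ bands raises $|\bdry\cdot|$ by exactly $1$ and leaves the genus unchanged. By Lemma~\ref{adding bands}(b) it suffices to check that each such band has both ends on the \emph{same} boundary component and an \emph{even} number of half-twists; that the genus is then unchanged follows because an orientable surface of fixed genus $g$ with one boundary component has Euler characteristic $2 - 2g - 1$, each band subtracts $1$ from $\chi$, and raising $|\bdry\cdot|$ by $1$ while keeping $\chi$'s drop equal to the number of bands forces the genus to stay at $g$. Concretely, $\chi(S_{n,k}) = \chi(D) - n = 1 - n$ and $|\bdry S_{n,k}| = 1 + (n - 2g) = n - 2g + 1$, so from $\chi = 2 - 2(\text{genus}) - |\bdry|$ we solve $\text{genus} = g$; the whole proposition then reduces to (i) the $\chi$ computation, which is immediate, and (ii) the boundary count, which is the two-stage Lemma~\ref{adding bands} bookkeeping above, plus the orientability of $S_{n,k}$ so that the Euler characteristic formula with genus applies.

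The main obstacle is step (ii) in its ``second changeover'' subtlety: I must verify that the changeover bands $\alpha_{k+1}$ and $\alpha_{k+2}$, and then the later first-changeover bands $\alpha_{jk+1}$, really do attach with their two ends on the same current boundary component and with the correct (even) twisting, rather than accidentally splitting or merging components in the wrong way. This requires reading the ordering of band ends around the disk from figure \ref{handlebody_non_sep_or_disk.pdf} carefully and tracking how each boundary component of the partially-built surface is presented as a cyclic word in the arcs $a_i, a_i'$ and the disk boundary arcs; once that combinatorial model is set up, applying Lemma~\ref{adding bands} at each step is mechanical. I expect to spend most of the proof establishing that combinatorial model cleanly and then invoke the Euler characteristic computation to get the genus for free.
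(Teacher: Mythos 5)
Your overall architecture (track $|\bdry\cdot|$ band by band via Lemma~\ref{adding bands}, then recover the genus from $\chi(S_{n,k})=1-n$ and orientability) is the same as the paper's, and your treatment of the second stage and the Euler characteristic step is correct. But your first stage contains a genuine error. You claim that each of the first $k=2g$ bands has both ends on the single existing boundary component and an \emph{odd} number of half twists, so that Lemma~\ref{adding bands}(c) keeps $|\bdry\cdot|=1$. This is the wrong mechanism, and it contradicts the construction: as the paper notes, every band in the orientable construction meets the boundary of the disk from the same direction on an orientable $\bdry H_2$, hence has an \emph{even} number of half twists. An odd-twisted band attached to one boundary component is a crosscap attachment; your stage one would produce a non-orientable surface (and even then, $2g$ crosscaps give non-orientable genus $2g$, not orientable genus $g$ --- note also that one band dropping $\chi$ by $1$ cannot raise the orientable genus by $1$, since a handle costs $\chi$ two).

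The correct bookkeeping, which you need and do not have, is an alternation: $\alpha_{2i+1}$ has even twists and both ends on the one boundary component of $\Sigma_{2i}$, so by Lemma~\ref{adding bands}(b) it \emph{splits} the boundary, giving $|\bdry\Sigma_{2i+1}|=2$; then the interleaved placement of the ends of $\alpha_{2i+2}$ around the disk (figure~\ref{handlebody_non_sep_or_disk.pdf}) puts its two ends on \emph{different} boundary components of $\Sigma_{2i+1}$, so Lemma~\ref{adding bands}(a) merges them back to one. Each such pair of bands is one handle, which is why $k=2g$ bands yield genus $g$. You have conflated the interleaving of band ends around the disk (which governs whether the ends land on the same or different boundary components) with the parity of the band's twisting; once you separate these two pieces of data and run the (b)-then-(a) induction in pairs, the rest of your argument goes through.
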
 
\begin{proof}
Denote by $\Sigma_{s}$ the surface obtained from adding the first $s$ bands in the construction of $S_{n,k}$. All bands added in the construction in this section have an even number of half twists because $\partial H_2$ is orientable and the ends of each band meet the boundary of the disk from the same direction.\\
The surface $\Sigma_0$ is the starting disk so $|\partial \Sigma_0|=1$. Let $i$ be such that $2i< k$.\footnote{So we are considering surfaces obtained in the construction before the first pair of changeovers.} Assume that $|\partial \Sigma_{2i}|=1$.  By Lemma \ref{adding bands}(b) $|\partial \Sigma_{2i+1}|=2$. By construction, the ends of the band $\alpha_{2i+2}$ are on the different boundary components of $\Sigma_{2i+1}$. Therefore, by Lemma \ref{adding bands}(a), $|\partial \Sigma_{2i+2}|=1$. By induction on $i$, $|\partial \Sigma_k|=1$.\\

By the construction of $S_{n,k}$ we can check that the band ends of the bands after the first pair of changeovers, $\alpha_{k+1},\ldots,\alpha_n$, are on the same boundary component. This means, by Lemma \ref{adding bands}(b), that each of these bands added increases the number of boundary components by one. Therefore, the number of boundary components of $S_{n,k}$ is $n-k+1$. The Euler characteristic of $S_{n,k}$ is $1-n$ so we conclude that $S_{n,k}$ has genus $g$.
\end{proof}

We require a special case to obtain a surface with genus zero and arbitrarily many boundary components. We start with the same non-separating disk as before and again denote the bands as $\alpha_i$. See figure \ref{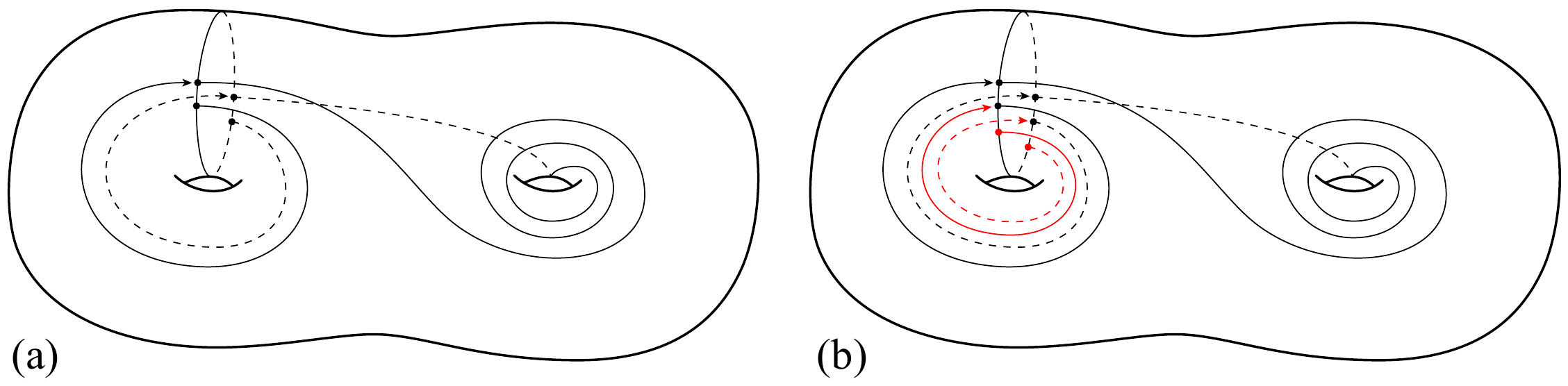} for the positions of the first three bands. We continue adding bands as follows: the band $\alpha_i$ goes around $x$ on the front side of the handlebody, goes through the tunnel formed by $\alpha_{i-1}$, goes around $x^{-1}$ on the back of the handlebody and ends at the disk. The back side and front side curves on the handlebody shrink concentrically. We denote the surface obtained by adding $n$ bands following this procedure by $S_{n,0}$.\\

\begin{figure}[htb]
\centering
\includegraphics[width=\textwidth]{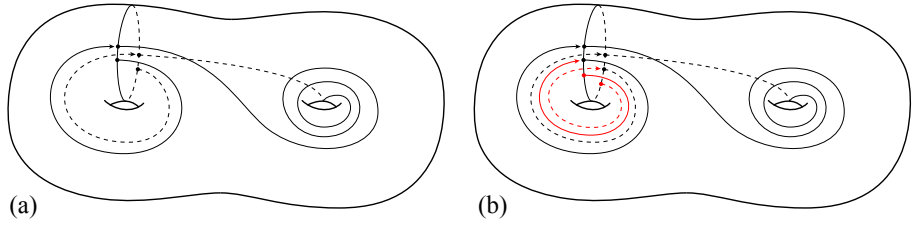}
\caption{Schematic diagrams of non-separating orientable surfaces within $H_2$ with genus zero. The notation is the same as used in figure \ref{handlebody_non_sep_or_schematic.pdf}.}                
\label{handlebody_non_sep_or_schematic_all_bdry.pdf}
\end{figure}

\begin{prop}\label{or sep genus 0 and boundary}
The surface $S_{n,0}$ has genus $0$ and $n+1$ boundary components.
\end{prop}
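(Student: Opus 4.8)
The plan is to mimic the proof of Proposition \ref{or sep genus and boundary}, tracking the number of boundary components band by band using Lemma \ref{adding bands}, and then to pin down the genus via the Euler characteristic. Let $\Sigma_s$ denote the surface obtained after attaching the first $s$ bands $\alpha_1,\ldots,\alpha_s$ in the construction of $S_{n,0}$; as before all bands have an even number of half twists since $\bdry H_2$ is orientable and both ends of each band meet the disk from the same direction. We have $|\bdry \Sigma_0| = 1$.

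The first step is to understand the boundary-component count inductively. Since the front side and back side curves shrink concentrically and $\alpha_i$ enters the tunnel of $\alpha_{i-1}$, the two ends of $\alpha_i$ lie on the boundary component of $\Sigma_{i-1}$ that runs alongside $\alpha_{i-1}$; in fact I expect that (unlike the genus-increasing construction) \emph{every} band here has both ends on the \emph{same} component of $\bdry \Sigma_{i-1}$, so that by Lemma \ref{adding bands}(b) each added band increases the number of boundary components by exactly one. Carefully justifying this claim about which boundary component the ends of $\alpha_i$ land on — i.e.\ that $\alpha_1$ has both ends on the single component of $\bdry\Sigma_0$ (trivial), and that the nesting/concentric-shrinking pattern keeps both ends of $\alpha_i$ on a common component of $\bdry\Sigma_{i-1}$ — is the main content, and is where a reader would want to stare at figure \ref{handlebody_non_sep_or_schematic_all_bdry.pdf}. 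This is the step I expect to be the principal (though still routine) obstacle: everything else is bookkeeping. Granting it, induction on $i$ gives $|\bdry \Sigma_s| = s+1$, hence $|\bdry S_{n,0}| = n+1$.

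The second step is the genus. Attaching a single band to a surface decreases the Euler characteristic by $1$, so $\chi(S_{n,0}) = \chi(\Sigma_0) - n = 1 - n$. For an orientable surface of genus $g$ with $b$ boundary components, $\chi = 2 - 2g - b$. Plugging in $b = n+1$ gives $2 - 2g - (n+1) = 1 - n$, hence $2g = 0$, i.e.\ $g = 0$. This completes the proof. The orientability of $S_{n,0}$ is automatic: it is built from a disk by attaching bands with an even number of half twists, so no M\"obius band ever appears.

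Alternatively, one could argue the genus is $0$ more directly by observing that $S_{n,0}$ deformation retracts onto a wedge of circles (the disk together with the cores of the bands), so it is homotopy equivalent to a graph and therefore has free fundamental group; a compact orientable surface with nonempty boundary and free $\pi_1$ that additionally has first Betti number equal to $|\bdry S_{n,0}| - 1 = n$ must have genus $0$. But the Euler characteristic computation above is cleaner, so I would present that.
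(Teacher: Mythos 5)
Your proof is correct and follows exactly the route the paper intends: the paper's own proof of this proposition is the single line ``the argument is similar to the proof of Proposition \ref{or sep genus and boundary},'' and what you have written is precisely that argument specialized to $S_{n,0}$ (every band has even twisting and both ends on a common boundary component, so Lemma \ref{adding bands}(b) gives $|\bdry S_{n,0}|=n+1$, and $\chi=1-n$ then forces $g=0$). The one claim you flag as needing verification --- that each $\alpha_i$ has both ends on the same component of $\bdry\Sigma_{i-1}$ --- is indeed the crux, and it holds because the concentric-shrinking pattern places the two ends of $\alpha_i$ adjacent to each other on the disk boundary, at the same level of ``check from the figure'' rigor the paper itself uses.
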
 
\begin{proof}
The argument is similar to the proof of Proposition \ref{or sep genus and boundary}.
\end{proof}

Suppose that $i:S_{n,k}\rightarrow H_2$ is the inclusion map as constructed above and $i_*: \pi_1(S_{n,k})\rightarrow \pi_1(H_2)$ the homomorphism induced by $i$. As in figure \ref{handlebody_non_sep_pi1.pdf}, we denote by $x$ and $y$ the two generators of $\pi_1(H_2)$. We will now prove that the embedded surface in $H_2$ given by $i$ is incompressible by showing that $i_*$ is injective.\\

For $k=0$, from the construction of $S_{n,0}$ we have that 
$$i_*(\alpha_s)=x^{-(s-1)}y^3x^{s-1}\text{, for  }1\leq s\leq n.$$

For $k\geq2$ assume that $n=qk+r$, where $0\leq r<k$.\\ 
Following the construction of the surface, the generators of $\pi_1(S_{n,k})$ are $\alpha_1,\alpha_2,\ldots ,\alpha_n$, and we have:

\begin{align*}
i_*(\alpha_1)&=y^3\\
i_*(\alpha_2)&=y^{-1}x^{-1}y^3x\\
&\vdots\\
i_*(\alpha_k)&=(y^{-1}x^{-1})^{k-1}y^3x^{k-1}\\
i_*(\alpha_{k+1})&=x^{-1}i_*(\alpha_k)x\\
i_*(\alpha_{k+2})&=x^{-1}i_*(\alpha_{k+1})(xy)\\
&\vdots\displaybreak[0]\\
i_*(\alpha_{(2j)k})&=(x^{-k}(y^{-1}x^{-1})^k)^{j-1}x^{-k}(y^{-1}x^{-1})^{k-1}y^3\\
&\quad (x^{k}(xy)^k)^{j-1}x^k(xy)^{k-1}\\
i_*(\alpha_{(2j)k+1})&=(y^{-1}x^{-1})i_*(\alpha_{(2j)k})(xy)\\
i_*(\alpha_{(2j)k+2})&=(y^{-1}x^{-1})i_*(\alpha_{(2j)k+1})x\\
&\vdots\displaybreak[0]\\
i_*(\alpha_{(2j+1)k})&=((y^{-1}x^{-1})^kx^{-k})^{j}(y^{-1}x^{-1})^{k-1}y^3\\
&\quad (x^{k}(xy)^k)^jx^{k-1}\\
i_*(\alpha_{(2j+1)k+1})&=x^{-1}i_*(\alpha_{(2j+1)k})x\\
i_*(\alpha_{(2j+1)k+2})&=x^{-1}i_*(\alpha_{(2j+1)k+1})(xy)\\
&\vdots\displaybreak[0]\\
\intertext{Either $q$ is even and we have:}
&\vdots\\
i_*(\alpha_{qk})&=(x^{-k}(y^{-1}x^{-1})^k)^{\frac{q}{2}-1}x^{-k}(y^{-1}x^{-1})^{k-1}y^3\\
&\quad(x^{k}(xy)^k)^{\frac{q}{2}-1}x^k(xy)^{k-1}\\
i_*(\alpha_{(qk+1})&=(y^{-1}x^{-1})i_*(\alpha_{qk})(xy)\\
i_*(\alpha_{qk+2})&=(y^{-1}x^{-1})i_*(\alpha_{(qk+1})x\\
&\vdots\\
i_*(\alpha_{qk+r})&=(y^{-1}x^{-1})^r(x^{-k}(y^{-1}x^{-1})^k)^{\frac{q}{2}-1}x^{-k}(y^{-1}x^{-1})^{k-1}y^3\\
&\quad(x^{k}(xy)^k)^{\frac{q}{2}-1}x^k(xy)^kx^{r-1}\displaybreak[0]\\
\intertext{Or $q$ is odd and we have:}
&\vdots\\
i_*(\alpha_{qk})&=((y^{-1}x^{-1})^kx^{-k})^{\frac{q-1}{2}}(y^{-1}x^{-1})^{k-1}y^3\\               
&\quad (x^{k}(xy)^k)^{\frac{q-1}{2}}x^{k-1}\\
i_*(\alpha_{qk+1})&=x^{-1}i_*(\alpha_{qk})x\\
i_*(\alpha_{qk+2})&=x^{-1}i_*(\alpha_{qk+1})(xy)\\
&\vdots\\
i_*(\alpha_{qk+r})&=x^{-r}((y^{-1}x^{-1})^kx^{-k})^{\frac{q-1}{2}}(y^{-1}x^{-1})^{k-1}y^3\\
&\quad (x^{k}(xy)^k)^{\frac{q-1}{2}}x^k(xy)^{r-1}.\\
\end{align*}  

For the proof that $i_*$ is injective we use the following Lemma. For a reduced word $w$, we denote by $L(w)$ the length of the word.
\begin{lemma}\label{word lemma}
Let $F_2$ be the free group with two generators, $g$ and $h$, and with identity element  $e$. Consider $$A=\{v_1g^pv_1', v_2g^pv_2',\cdots, v_ng^pv_n'\} \,\text{with $p\geq 3$,}$$ where $v_ig^pv_i'$, for $i=1,\cdots,n$, is a reduced word on $g$ and $h$,  and $v_i$, $v_i'$ are words where $g$, or $g^{-1}$, alternate with powers of $h$. Furthermore, $v_i$ ends with a power of $h$ and $v_i'$ starts with a power of $h$.  Assume also that $L(v_{i+1})>L(v_i)$ and $L(v_{i+1}')>L(v_i')$, for $i=1,2,\cdots,n-1$. Take $w$ to be a reduced word on the elements of $A$. That is, $w$ is expressed as a product of elements of $A$ or inverses of those elements, with no consecutive cancelling terms. Then $w\neq e$. 
\end{lemma}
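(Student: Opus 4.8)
The plan is to track how a reduced word $w = u_1 u_2 \cdots u_m$ in the elements of $A$ (where each $u_j \in \{v_{i}g^pv_{i}', (v_{i}g^pv_{i}')^{-1}\}$) reduces, and to show that after free cancellation a ``large'' power of $g$ survives, hence $w \neq e$. The crucial structural feature to exploit is that each generator $v_ig^pv_i'$ carries a block $g^p$ with $p \geq 3$ in its middle, flanked by $v_i$ (ending in a power of $h$) and $v_i'$ (starting with a power of $h$); since $v_i, v_i'$ themselves are alternating words in powers of $h$ and single powers $g^{\pm 1}$, the only way a $g$-exponent of absolute value $\geq 2$ can ever occur in any subword of a product is from one of these central $g^p$ blocks (adjacent generators contribute at most $g^{\pm 1}$ at a time to any splice point, and $v_i'(v_{i+1})^{-1}$-type junctions can only cancel $h$-powers against $h$-powers or isolated $g^{\pm1}$ against $g^{\mp1}$, never building up a long $g$-string). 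So the strategy is: \emph{show that at least one central block $g^p$ in $w$ is never fully cancelled, and in fact survives into the reduced form with exponent of absolute value $\geq 2$}.

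First I would set up notation: write $w$ as a concatenation of $m$ syllables $u_1\cdots u_m$, each $u_j = \varepsilon_j\text{-power of some }v_{i_j}g^pv_{i_j}'$ with $\varepsilon_j = \pm 1$, and look at the junction between $u_j$ and $u_{j+1}$. The key lemma I would isolate is a \emph{bounded-cancellation} statement: at any such junction the amount of cancellation, measured appropriately, cannot reach across the central $g^p$ block of either $u_j$ or $u_{j+1}$. To prove this, observe that for cancellation to propagate into (say) the tail $v_{i_j}'$ of $u_j$ from the head of $u_{j+1}$, the head of $u_{j+1}$ must match the inverse of a terminal segment of $v_{i_j}'$; because $v_{i_j}'$ is an alternating $h$-power/$g^{\pm1}$ word, and likewise the relevant end of $u_{j+1}$, the matching is forced letter-by-letter, and — here is where the hypotheses $L(v_{i+1}) > L(v_i)$, $L(v_{i+1}') > L(v_i')$ enter — the length disparity between the $v$'s of differing indices, together with the fact that identical-index junctions $u_j u_{j+1}$ with $u_{j+1} = u_j^{-1}$ are excluded by reducedness of $w$ as a word on $A$, prevents a terminal segment of $v_{i_j}'$ from being exactly the inverse of an initial segment of the next relevant $v$. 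So cancellation stops strictly before the $g^p$ block. I would prove this by a careful case analysis on the signs $\varepsilon_j, \varepsilon_{j+1}$ and on whether $i_j = i_{j+1}$ or $i_j \neq i_{j+1}$; the $i_j\neq i_{j+1}$ cases use the strict length inequalities, and the $i_j = i_{j+1}$, $\varepsilon_j = -\varepsilon_{j+1}$ case is exactly the one excluded by reducedness.

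Given the bounded-cancellation lemma, the conclusion follows: pick any syllable $u_j$ in $w$ — say $j=1$ — its central block $g^{\pm p}$ is untouched by cancellation from either side, so the reduced form of $w$ literally contains a subword $g^{\pm p}$ with $p \geq 3 \geq 2$, in particular the reduced form is nonempty, whence $w \neq e$. (One should double-check the edge cases $m = 1$, which is immediate since each element of $A$ is a nonempty reduced word, and the possibility that $w$ as written is already non-reduced as a word in $F_2$ even though reduced as a word in $A$ — this is precisely what the lemma handles.) I expect the main obstacle to be the bounded-cancellation lemma itself: making rigorous the claim that the alternating structure of the $v_i, v_i'$ plus the strict length growth forbids a tail of one $v_i'$ from being swallowed past the $g^p$ barrier. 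The delicate point is that cancellation could in principle ``eat'' all of $v_{i_j}'$ and the $g^p$ block and part of $v_{i_j}$ if the next syllable were long enough and aligned just right; ruling this out requires using that $v_{i_{j+1}}$ (or $v_{i_{j+1}}'$, depending on signs) cannot reproduce the reverse-inverse of $g^p v_{i_j}^{-1}$-type strings because that would force a $g$-exponent of size $p \geq 3$ inside a $v$, contradicting that the $v$'s only ever use $g^{\pm 1}$. So the argument is genuinely self-reinforcing, and organizing the induction/case-split cleanly is the real work.
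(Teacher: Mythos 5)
Your plan is correct and follows essentially the same route as the paper's proof: decompose $w$ into syllables, analyze the junction products $u_i'u_{i+1}$ using the fact that the flanking words contain only isolated $g^{\pm 1}$'s alternating with $h$-powers (so a junction can feed at most one $g^{\pm1}$ into an adjacent central block), and rule out total collapse at a junction via the strict length inequalities together with reducedness of $w$ as a word on $A$. One small correction: the central block is not literally ``untouched'' --- it can lose one $g$ from each side, so what survives is $g^{\pm(p-2)}$ rather than $g^{\pm p}$, which is precisely why the hypothesis $p\geq 3$ is needed.
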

\begin{proof}
Let $w=w_1w_2\cdots w_s$ where either $w_i \in A$ or $w_i^{-1} \in A$ and $s\in \mathbb{N}$.
If $s=1$ then $w=w_1$ and $w\neq e$. Let $s\geq 2$ and assume $w=e$. Then as $w\in F_2$ there are two consecutive letters in $w$ that are inverses of each other. However $w_i$ is a reduced word, which means that two consecutive canceling letters cannot be part of the same $w_i$. So, cancelling pairs in $w$ are defined by letters of different $w_i$'s. As either $w_i \in A$ or $w_i^{-1} \in A$ we have $$w_i=u_ig^{\epsilon_i p}u_i', i\in \{1, \ldots, s\}, \epsilon_i \in \{-1,1\},$$ where $u_i$ or $u_i^{-1}$, and $u_i'$ or $u_i'^{-1}$ are elements of $\{v_j, v_j': j=1,\ldots,n\}$.\\   Take $c_i=u_i'u_{i+1}$ from which we can write $w_iw_{i+1}=u_ig^{\epsilon_i p}c_ig^{\epsilon_{i+1} p}u_{i+1}'$.\\  If $c_i=e$ then $L(u_i')=L(u_{i+1})$. If the signs of $\epsilon_i$ and  $\epsilon_{i+1}$ are opposite then it follows that $w_i=w_{i+1}^{-1}$ which contradicts the assumption on $w$. Therefore, when $c_i=e$ we have $\epsilon_i$ equal to $\epsilon_{i+1}$, so the powers $g^{\epsilon_ip}$ and $g^{\epsilon_{i+1}p}$ don't cancel each other. We have, 

 $\begin{array}{lll}
 w&=&u_1g^{\epsilon_1 p}u_1'u_2g^{\epsilon_2 p}u_2'\cdots u_ig^{\epsilon_i p}u_i'u_{i+1}g^{\epsilon_{i+1} p}u_{i+1}'\cdots u_sg^{\epsilon_s p}u_s'\\
 &=& u_1g^{\epsilon_1 p}c_1g^{\epsilon_2 p}c_2\cdots c_{i-1}g^{\epsilon_i p}c_ig^{\epsilon_{i+1} p}c_{i+1}\cdots c_{s-1}g^{\epsilon_s p}u_s'.
 \end{array}$\\
 Canceling each $c_i$ as much as we can, for $i=\{1,\ldots, s-1\}$, either
 \begin{itemize}
 \item[a)] $c_i=e$ , when $u_i$ and $u_i'$ cancel each other, and, as above, $\epsilon_i=\epsilon_{i+1}$ which means the respective powers of $g$ aren't canceled; or\\
 \item[b)] $c_i=h^{t_1}\cdots h^{t_2} g^{\pm 1}$, which can only happen if part of $u_i'$ cancels the whole of $u_{i+1}$; or\\
 \item[c)] $c_i=g^{\pm 1}h^{t_1}\cdots h^{t_2}$, which can only happen if part of $u_{i+1}$ cancels the whole of $u_i'$;  or\\
 \item[d)] $c_i=h^{t_1}\cdots h^{t_2}$, which can only happen if part of $u_i'$ (or $u_{i+1}$) cancels the whole of $u_{i+1}$ (resp., $u_i'$) or when none of $u_i'$ and $u_{i+1}$ is canceled;
 \end{itemize}
 where $t_1$ and $t_2$ represent some non-zero exponent.\\
 As $p\geq 3$, the exponent $\epsilon_i  p$ of $g$ from the word $w_i$, with $i\in\{1,2,\ldots,s\}$, in $w$ is never entirely canceled. Then $L(w)\geq s$. Hence $w\neq e$.               
  \end{proof}

 \begin{prop}\label{orientable non-separating}
  The homomorphism $i_*:\pi_1(S_{n,k})\rightarrow \pi_1(H_2)$ is injective.
 \end{prop}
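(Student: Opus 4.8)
The plan is to show that $i_*$ is injective by identifying $\pi_1(S_{n,k})$ as a free group on the generators $\alpha_1, \ldots, \alpha_n$ (since $S_{n,k}$ is a surface with boundary, hence homotopy equivalent to a wedge of circles), and then verifying that the images $i_*(\alpha_1), \ldots, i_*(\alpha_n)$ satisfy the hypotheses of Lemma \ref{word lemma} with $g = y$, $h = x$, and $p = 3$. Once that is done, any nontrivial reduced word $w$ in the $\alpha_i$ maps to a nontrivial reduced word in the set $A = \{i_*(\alpha_1), \ldots, i_*(\alpha_n)\}$, so $i_*(w) \neq e$ by the lemma, giving injectivity. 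The case $k = 0$ should be handled separately but is easier: there $i_*(\alpha_s) = x^{-(s-1)} y^3 x^{s-1}$, so $v_s = x^{-(s-1)}$ and $v_s' = x^{s-1}$, and the length conditions $L(v_{s+1}) > L(v_s)$, $L(v_{s+1}') > L(v_s')$ are immediate; the alternation and ``ends/starts with a power of $h$'' conditions hold trivially (with $h = x$).

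For $k \geq 2$, I would write $i_*(\alpha_m) = v_m\, y^3\, v_m'$ by inspecting the explicit formulas listed before the lemma: each $i_*(\alpha_m)$ has exactly one occurrence of $y^3$ (coming from $i_*(\alpha_1) = y^3$, which is threaded unchanged through all the tunnels), and the prefix $v_m$ and suffix $v_m'$ are alternating products of powers of $x$ with single occurrences of $y^{\pm 1}$ (equivalently, powers of $g^{\pm 1} = y^{\pm 1}$ alternating with powers of $h = x$), with $v_m$ ending in a power of $x$ and $v_m'$ beginning with a power of $x$. I would check this by induction on $m$, using the recursive relations $i_*(\alpha_{m+1}) = (\text{prefix})\, i_*(\alpha_m)\, (\text{suffix})$ where the prefix/suffix are among $x^{-1}$, $y^{-1}x^{-1}$ and $x$, $xy$: in each of the four regimes (blocks between consecutive changeovers) the prefix added is a power of $x$ followed possibly by $y^{-1}$, and the suffix added is a power of $x$ possibly preceded by $y$, so the alternation pattern and the ``outermost letter is a power of $x$'' condition are preserved, and each step strictly increases both $L(v_m)$ and $L(v_m')$ by at least one. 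The reducedness of each $v_m y^3 v_m'$ also follows from this inductive bookkeeping, since no cancellation occurs at the junctions (the $y^3$ is flanked by powers of $x$, and the newly prepended/appended syllables never cancel against what is already there — this is essentially the content of the ``curves cannot cross'' embedding constraint).

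The main obstacle I anticipate is the careful verification that the length inequalities $L(v_{m+1}) > L(v_m)$ and $L(v_{m+1}') > L(v_m')$ hold \emph{uniformly} across the changeover boundaries, not just within a single block. Within a block each band adds one syllable to each side, so lengths grow; but I must confirm that at a changeover (where the recursion switches from, say, appending $x$-type suffixes to appending $xy$-type suffixes, or where a fresh pair of changeovers begins) there is no step at which a cancellation shortens $v_m$ or $v_m'$. Inspecting the formulas, at the transition $i_*(\alpha_{jk+1})$ the new prefix is $y^{-1}x^{-1}$ or $x^{-1}$ and the old $v_{jk}$ ends in a positive or negative power of $x$ — I need to confirm the signs are arranged (by the embedding) so these concatenate without collapse, which is exactly why the construction alternates the direction of changeovers. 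Once monotonicity of both length sequences is established together with the alternation and boundary-syllable conditions, Lemma \ref{word lemma} applies directly and the proof is complete. I would also note at the end that injectivity of $i_*$ implies $S_{n,k}$ is incompressible (and, being $\pi_1$-injective, $\bdry$-$\pi_1$-injective issues aside, gives exactly the claimed $\pi_1$-injective embedding), and that Propositions \ref{or sep genus and boundary} and \ref{or sep genus 0 and boundary} show these surfaces realize every genus and every number $b \geq 1$ of boundary components.
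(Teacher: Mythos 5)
Your proposal follows essentially the same route as the paper: write each $i_*(\alpha_i)$ as $v_i y^3 v_i'$, verify the alternation, reducedness, and length-monotonicity hypotheses of Lemma \ref{word lemma} (with $g=y$, $h=x$, $p=3$), and conclude injectivity since $\pi_1(S_{n,k})$ is free on the $\alpha_i$. The paper simply asserts these hypotheses from its explicit list of words, whereas you spell out the inductive bookkeeping across changeovers; this is a difference of detail, not of method.
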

 \begin{proof}
 Take $A=\{i_*(\alpha_1),i_*(\alpha_2),\ldots,i_*(\alpha_n)\}$ and, using the words from above, write $i_*(\alpha_i)=v_iy^3v_i'$, for $i=1,\ldots,n$. We have that the $v_iy^3v_i' $, for $i=1,\ldots,n$,  are reduced words on $x$ and $y$, and $v_i$, $v_i'$ are words where $y$, or its inverse, alternates with powers of $x$. Furthermore, $v_i$ ends with a power of $x$ and $v_i'$ starts with a power of $x$. Also $L(v_{i+1})>L(v_i)$ and $L(v_{i+1}')>L(v_i')$, for $i=1,2,\ldots,n-1$. So, we are under the hypotheses of Lemma \ref{word lemma}.\\

Let $w=i_*(\alpha)$ for some $\alpha\in \pi_1(S_{n,k})$ with $\alpha\neq e$. We can write $W$ as a reduced word on the elements of $A$: $w=w_1w_2\cdots w_s$, where  either $w_i \in A$ or $w_i^{-1} \in A$. Then by Lemma \ref{word lemma}, $w\neq e$. Hence $i_*$ is injective. \end{proof}
 
The above results prove Theorem \ref{all surfaces in H2} for orientable surfaces.

 \subsection{Non-orientable surfaces}

The setup for the non-orientable case is similar to the orientable case. We use the same generators for $\pi_1(H_2)$ as in figure \ref{handlebody_non_sep_pi1.pdf}. We again start with a single disk, as in figure \ref{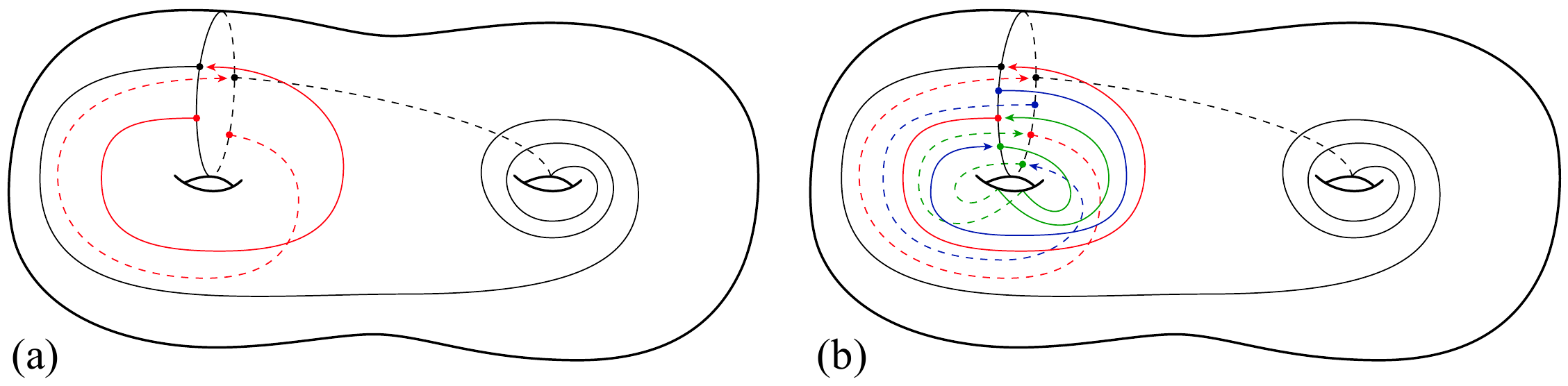}. This time the first $k$ bands will meet the disk from opposite directions, and so have an odd number of half twists, and the constructed surfaces are non-orientable. \\

\begin{figure}[htb]
\centering
\includegraphics[width=\textwidth]{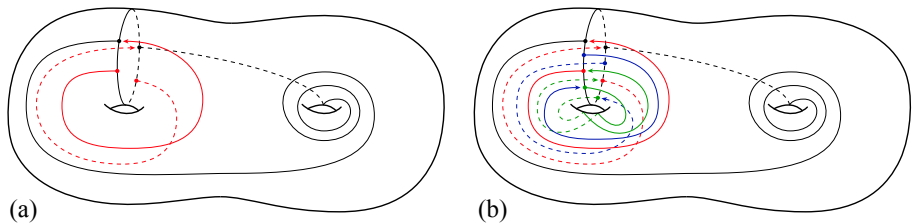}
\caption{Schematic diagrams of non-orientable surfaces within the handlebody. These diagrams show the first stage, which adds non-orientable genus.}
\label{handlebody_non_sep_non_or_schematic_start.pdf}
\end{figure}

\begin{figure}[htbp]
\centering
\includegraphics[width=\textwidth]{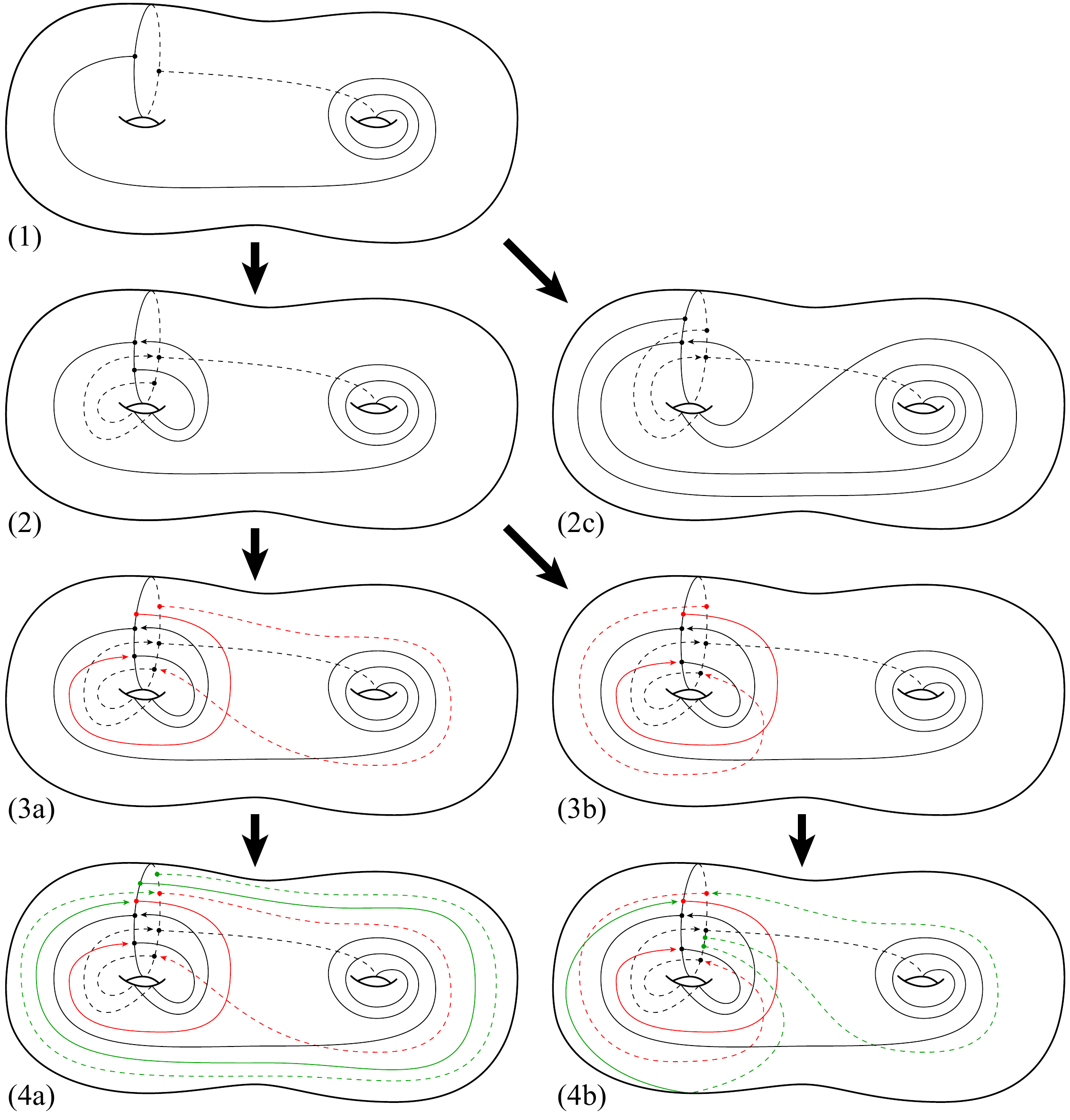}
\caption{Schematic diagrams of non-orientable surfaces within the handlebody. This shows the start of the second stage, for even $k$ in (3a,4a), for $k=1$ in (2c) and for other odd $k$ in (4b).}
\label{handlebody_non_sep_non_or_schematic_continue.pdf}
\end{figure}

As before we refer to the bands as $\alpha_i$ with ends $a_i, a'_i$. Also, as in the orientable case there are two stages, and again the first stage (adding the first $k$ bands) produces (non-orientable) genus and the second stage produces boundary components.\\

We show an example of the first stage in figure \ref{handlebody_non_sep_non_or_schematic_start.pdf}. The band $\alpha_1$ is similar to as in the orientable case, except that one end meets the disk from the other side, and so the image of it in $\pi_1(H_2)$ is $xy^3$. The band $\alpha_2$ spirals into the $x$ handle from both sides. We continue spiraling in, but we choose some point at which the two ends of the band cross over their positions at which they meet the disk. We continue adding bands, spiraling outwards again, until the ends $a_i$ and $a'_i$ are adjacent and below $a_1$ and $a'_1$ in the figure. This happens at $\alpha_4$ in figure \ref{handlebody_non_sep_non_or_schematic_start.pdf}, and in general will happen at an even subscript.\\

If we want the number of bands in the first stage, $k$, to be even, then we end the first stage here. If not then we add one further band continuing the spiral outwards around the $x$ handle, so that the ends of the band are adjacent and above $a_1$ and $a'_1$. See figure \ref{handlebody_non_sep_non_or_schematic_continue.pdf}(3b). In figure \ref{handlebody_non_sep_non_or_schematic_continue.pdf} the two ends of the band cross over earlier than in figure \ref{handlebody_non_sep_non_or_schematic_start.pdf}, and we see the start of the second stage for $k$ equal to 1, 2 and 3.\\

The second stage is slightly different for the two cases: $k$ even and $k$ odd. We also have a different second stage for $k=1$. Figure \ref{handlebody_non_sep_non_or_schematic_continue.pdf}(3a,4a) shows the continuation for even $k$. In each of these diagrams we add a band whose ends are adjacent to each other, and as they meet the disk from the same side they have an even number of half twists. The pattern continues with further bands that add parts that continue moving concentrically outwards around both handles of the handlebody. Each added band has adjacent ends that meet the disk from the same side.\\

Figure \ref{handlebody_non_sep_non_or_schematic_continue.pdf}(4b) shows the continuation for odd $k \geq 3$. Again we add a band whose ends are adjacent to each other and meet the disk from the same side, and so have an even number of half twists. Once again the pattern continues on with new bands with adjacent ends that meet the disk from the same side. This time it is a little less obvious where these bands will go, but it isn't hard to see that we can always continue the bands in a manner that spirals around the $x$ handle in the anti-clockwise direction.\\

Figure \ref{handlebody_non_sep_non_or_schematic_continue.pdf}(2c) shows the continuation for $k=1$. Similarly to the case for even $k$, the pattern continues with adding parts that move concentrically outwards around both handles of the handlebody.\\

Let $S_{n,k}$ be the surface described above using $n$ bands and ending the first stage at step $k\geq1$.\\

\begin{prop}
The surface $S_{n,k}$ has non-orientable genus $k$ and $n-k+1$ boundary components.
\end{prop}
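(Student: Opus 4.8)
The plan is to mirror the proof of Proposition~\ref{or sep genus and boundary} almost verbatim, the only essential change being that the bands of the first stage now have an \emph{odd} number of half twists. As there, write $\Sigma_s$ for the surface obtained after attaching the first $s$ bands in the construction of $S_{n,k}$, so that $\Sigma_0$ is the starting disk and $\Sigma_n = S_{n,k}$. Each band is attached as a $1$-handle, so $\chi(S_{n,k}) = \chi(\Sigma_0) - n = 1 - n$.

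First I would count boundary components. Since $\Sigma_0$ is a disk, $|\partial\Sigma_0| = 1$. For $1 \le s \le k$ the two ends of $\alpha_s$ meet the disk from opposite directions, so (using that $\partial H_2$ is orientable) $\alpha_s$ has an odd number of half twists; inductively $\Sigma_{s-1}$ has a single boundary component, hence the two ends of $\alpha_s$ automatically lie on the same component, and Lemma~\ref{adding bands}(c) gives $|\partial\Sigma_s| = |\partial\Sigma_{s-1}|$. Thus $|\partial\Sigma_k| = 1$. For $k < s \le n$ the ends of $\alpha_s$ meet the disk from the same side, so $\alpha_s$ has an even number of half twists, and from the construction (just as in the orientable case) both ends lie on a single boundary component of $\Sigma_{s-1}$; Lemma~\ref{adding bands}(b) then gives $|\partial\Sigma_s| = |\partial\Sigma_{s-1}| + 1$. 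Hence $|\partial S_{n,k}| = 1 + (n-k) = n - k + 1$.

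Next I would note that $S_{n,k}$ is non-orientable: since $k \ge 1$ the band $\alpha_1$ has an odd number of half twists, so $\Sigma_0 \cup \alpha_1$ is a M\"obius band contained in $S_{n,k}$. A compact non-orientable surface with non-orientable genus $c$ and $b$ boundary components has Euler characteristic $2 - c - b$; substituting $\chi(S_{n,k}) = 1-n$ and $b = n-k+1$ into $2 - c - (n-k+1) = 1-n$ yields $c = k$. This gives non-orientable genus $k$ and $n-k+1$ boundary components, as claimed.

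The one point that needs care is exactly the one passed over as ``we can check'' in the proof of Proposition~\ref{or sep genus and boundary}: that in the second stage the two ends of every band $\alpha_{k+1},\ldots,\alpha_n$ lie on a single boundary component of the surface constructed so far. In the first stage this is automatic, since that surface has exactly one boundary component at every step, but in the second stage it must be read off from how the bands are nested, and here the cases $k$ even, $k = 1$, and $k \ge 3$ odd must be checked separately using figures~\ref{handlebody_non_sep_non_or_schematic_continue.pdf}(3a,4a), (2c) and (4b) respectively. Granting this, the rest is a direct application of Lemma~\ref{adding bands} and the Euler characteristic computation above.
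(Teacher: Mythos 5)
Your proof is correct and follows essentially the same route as the paper's: the same induction with Lemma \ref{adding bands}(c) for the first $k$ odd-twisted bands, Lemma \ref{adding bands}(b) for the remaining even-twisted bands, and the Euler characteristic computation $\chi = 1-n$ to extract the genus. You are slightly more explicit than the paper in justifying non-orientability (via the M\"obius band formed by $\alpha_1$) and in flagging that the ``same boundary component'' claim for $\alpha_{k+1},\ldots,\alpha_n$ requires a case check against the figures, but these are refinements of, not departures from, the paper's argument.
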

\begin{proof}
Denote by $\Sigma_s$ the surface obtained from adding the first $s$ bands in the construction of $S_{n,k}$. In the construction of $S_{n,k}$ the ends of the first $k$ bands meet the boundary of the disk from opposite directions and the ends of the remaining $n-k$ bands meet the boundary of the disk from the same direction. As $\partial H_2$ is orientable, the first $k$ bands have an odd number of half twists and the next $n-k$ bands have an even number of half twists.\\

The starting disk is $\Sigma_0$ so $|\partial \Sigma_0|=1$. Let $i$ be such that $i<k$. Assume that $|\partial\Sigma_i|=1$. As $i+1\leq k$, the band $\alpha_{i+1}$ has an odd number of half twists. By Lemma \ref{adding bands} (c) we have that $|\partial \Sigma_{i+1}|=1$. So, by induction on $i$ we have that $|\partial \Sigma_k|=1$.\\

By the construction of $S_{n,k}$ we can check that the band ends of $\alpha_{k+1},\ldots,\alpha_n$ are on the same boundary component. Also, these bands have an even number of half twists. So, from Lemma \ref{adding bands} (b) each of these bands added increases the number of boundary components by one. Therefore the number of boundary components of $S_{n,k}$ is $n-k+1$. As the Euler characteristic of $S_{n,k}$ is $1-n$ the surface $S_{n,k}$ has non-orientable genus $k$.
\end{proof}

Suppose that $i:S_{n,k}\rightarrow H_2$ is the inclusion map as constructed above and $i_*: \pi_1(S_{n,k})\rightarrow \pi_1(H_2)$ the induced homomorphism by $i$. As mentioned before, we denote by $x$ and $y$ the two generators of $\pi_1(H_2)$. \\ 

Following the construction of the surface, the generators of $\pi_1(S_{n,k})$ are $\alpha_1,\,\alpha_2,\,\cdots ,\, \alpha_n$.\\

We have:
\begin{align*}
i_*(\alpha_1)&=xy^3\\
i_*(\alpha_2)&=x(xy^3)x\\
&\vdots\\
i_*(\alpha_k)&=x^{k-1}(xy^3)x^{k-1}\\
&\vdots\displaybreak[0]\\
\intertext{Either $k$ is even and we have:}
&\vdots\\
i_*(\alpha_{k+1})&=y^{-1}x^{k-1}(xy^3)x^{k-1}x\\
i_*(\alpha_{k+2})&=(y^{-1}x^{-1})y^{-1}x^ky^3x^k(xy)\\
&\vdots\\
i_*(\alpha_{k+(n-k)})&=(y^{-1}x^{-1})^{n-k-1}y^{-1}x^ky^3x^k(xy)^{n-k-1} \displaybreak[0]\\
\intertext{Or $k$ is odd and we consider the two cases:}
\intertext{For $k=1$ we have:}
&\vdots\\
i_*(\alpha_{2})&=x(xy^3)xy^{-1}x^{-1}\\
i_*(\alpha_{3})&=(xy)x^2y^3xy^{-1}x^{-1}(y^{-1}x^{-1})\\
i_*(\alpha_{4})&=(xy)^2x^2y^3x(y^{-1}x^{-1})^3\\
&\vdots\\
i_*(\alpha_{n})&=(xy)^{n-2}x^2y^3x(y^{-1}x^{-1})^{n-1}\displaybreak[0]\\
\intertext{For $k\geq3$ we have:}
&\vdots\\
i_*(\alpha_{k+1})&=yx^{k-1}(xy^3)x^{k-1}x\\
i_*(\alpha_{k+2})&=x^{-1}yx^ky^3x^kx\\
i_*(\alpha_{k+3})&=x^{-2}yx^ky^3x^kx^2\\
&\vdots\\
i_*(\alpha_{k+(n-k)})&=x^{-(n-k-1)}yx^ky^3x^kx^{(n-k-1)}\\
\end{align*}

 \begin{prop}\label{non-orientable non-separating}
 The homomorphism $i_*:\pi_1(S_{n,k})\rightarrow \pi_1(H_2)$ is injective.
 \end{prop}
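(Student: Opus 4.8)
The plan is to mimic the orientable case and reduce the statement to a single combinatorial lemma about reduced words in the free group $F_2 = \pi_1(H_2) = \langle x, y\rangle$, analogous to Lemma \ref{word lemma}. First I would examine the explicit formulas for $i_*(\alpha_i)$ listed above and verify that in each of the three cases ($k$ even, $k=1$, $k\geq 3$ odd) the generators can be written in the common form $i_*(\alpha_i) = v_i\, y^3\, v_i'$ (or a suitably conjugated/reflected version), where $v_i$ ends in a power of $x$, $v_i'$ starts with a power of $x$, the words alternate powers of $x$ with $y^{\pm 1}$, and — crucially — the lengths $L(v_i)$ and $L(v_i')$ are strictly increasing in $i$. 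Inspecting the lists, for $k$ even one has $v_i = (y^{-1}x^{-1})^{\,i-k-1}y^{-1}x^k$ and $v_i' = x^k(xy)^{\,i-k-1}$ for $i>k$, together with the first-stage words $v_i = x^{i}$, $v_i' = x^{i-1}$; for $k\geq 3$ odd the second-stage words are $v_i = x^{-(i-k-1)}yx^k$, $v_i' = x^k x^{\,i-k-1}$; and for $k=1$ similarly with $(xy)$ and $(y^{-1}x^{-1})$ blocks. One must be slightly careful where first-stage and second-stage words meet (the exponent patterns change), but the strict length growth persists throughout.

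Once the generators are in this normal form, the argument is essentially identical to Proposition \ref{orientable non-separating}: take $A = \{i_*(\alpha_1),\dots,i_*(\alpha_n)\}$, take any nontrivial $\alpha \in \pi_1(S_{n,k})$, write $i_*(\alpha)$ as a reduced word $w = w_1\cdots w_s$ on the elements of $A^{\pm 1}$, and apply Lemma \ref{word lemma} (with $g = y$, $h = x$, $p = 3$) to conclude $w \neq e$, hence $i_*$ is injective. The key point that makes Lemma \ref{word lemma} applicable is that consecutive $w_i, w_{i+1}$ with $w_i \neq w_{i+1}^{-1}$ cannot cause the $y^3$ blocks to be fully consumed, because the separating subword $c_i = u_i' u_{i+1}$ (a product of two of the $v_j, v_j'$ up to inversion) can never cancel more than one of the $y^{3}$'s worth of letters; the strictly increasing lengths guarantee that distinct band-ends cannot line up to produce $w_i = w_{i+1}^{-1}$ unless $\alpha$ itself is trivial.

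The one genuine obstacle I anticipate is that the formulas are not uniform: they split into the three cases, and within the $k\geq 3$ odd case the second-stage words $x^{-(i-k-1)}yx^ky^3x^kx^{i-k-1}$ have a slightly different shape (note the lone $y$ rather than a $y^{-1}$, and the asymmetric $x$-powers) from the orientable-case words, so one must check by hand that the hypotheses of Lemma \ref{word lemma} — in particular that $v_i$ genuinely ends with a nonzero power of $x$ and $v_i'$ genuinely starts with one, with no accidental cancellation against the $y^3$ core, and that the $v_i$ and $v_i'$ lengths are both strictly increasing across the junction $i = k$ to $i = k+1$ — really do hold in every case. I would therefore organize the proof as: (i) a short paragraph stating the normal form $i_*(\alpha_i) = v_i y^3 v_i'$ and reading off $v_i, v_i'$ in each of the three cases from the displayed formulas; (ii) a sentence verifying the alternation, endpoint, and strict length-growth conditions; (iii) an appeal to Lemma \ref{word lemma} exactly as in Proposition \ref{orientable non-separating}. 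If in some case the core is not literally $y^3$ but, say, $y^{-3}$ or a cyclic conjugate, I would note that Lemma \ref{word lemma} applies verbatim after replacing $y$ by $y^{-1}$ or after a harmless relabeling, since its proof only uses $|p| \geq 3$ and the alternating structure.

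\begin{proof}
As in the orientable case we write each generator in the normal form
$i_*(\alpha_i) = v_i\, y^{3}\, v_i'$, reading $v_i, v_i'$ directly off the formulas displayed above. In the first stage ($1 \le i \le k$) we have $v_i = x^{i}$ and $v_i' = x^{i-1}$ in every case. In the second stage, for $k$ even we have $v_i = (y^{-1}x^{-1})^{\,i-k-1}y^{-1}x^{k}$ and $v_i' = x^{k}(xy)^{\,i-k-1}$ for $k < i \le n$; for $k \ge 3$ odd we have $v_i = x^{-(i-k-1)}y\,x^{k}$ and $v_i' = x^{k}x^{\,i-k-1}$; and for $k = 1$ we have $v_i = (xy)^{\,i-2}x^{2}$ and $v_i' = x\,(y^{-1}x^{-1})^{\,i-1}$. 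In each case the $v_i y^{3} v_i'$ are reduced words in $x$ and $y$, the words $v_i, v_i'$ alternate powers of $x$ with $y^{\pm 1}$, each $v_i$ ends with a nonzero power of $x$, each $v_i'$ starts with a nonzero power of $x$, and one checks from the formulas that $L(v_{i+1}) > L(v_i)$ and $L(v_{i+1}') > L(v_i')$ for $i = 1, \ldots, n-1$ (including across the junction $i = k$). Thus the set $A = \{ i_*(\alpha_1), \ldots, i_*(\alpha_n) \}$ satisfies the hypotheses of Lemma \ref{word lemma} with $g = y$, $h = x$, and $p = 3$.

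Now let $\alpha \in \pi_1(S_{n,k})$ with $\alpha \neq e$, and set $w = i_*(\alpha)$. Since $\alpha_1, \ldots, \alpha_n$ generate $\pi_1(S_{n,k})$, we may write $w$ as a reduced word on the elements of $A$, that is, $w = w_1 w_2 \cdots w_s$ with $w_i \in A$ or $w_i^{-1} \in A$ and no consecutive cancelling terms. By Lemma \ref{word lemma}, $w \neq e$. Hence $i_*$ is injective.
\end{proof}
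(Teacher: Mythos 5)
Your proof is correct and takes essentially the same route as the paper's: both write each generator in the normal form $i_*(\alpha_i)=v_i\,y^3\,v_i'$, verify the alternation, endpoint, and strict length-growth hypotheses of Lemma \ref{word lemma}, and then conclude injectivity exactly as in Proposition \ref{orientable non-separating}. Your version is in fact slightly more explicit than the paper's, which simply asserts the normal form and the length conditions without reading off the $v_i,v_i'$ case by case.
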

 \begin{proof}
 Take $A=\{i_*(\alpha_1),i_*(\alpha_2),\cdots,i_*(\alpha_n)\}$ and, using the words from above, write $i_*(\alpha_i)=v_iy^3v_i'$, for $i=1,\ldots,n$. We have that the $v_iy^3v_i' $, for $i=1,\ldots,n$,  are reduced words on $x$ and $y$, and $v_i$, $v_i'$ are words where $y$, or its inverse, alternates with powers of $x$. Furthermore, $v_i$ ends with a power of $x$ and $v_i'$ starts with a power of $x$. Also $L(v_{i+1})>L(v_i)$ and $L(v_{i+1}')>L(v_i')$, for $i=1,2,\ldots,n-1$. So,  we are under the hypothesis of Lemma \ref{word lemma}.\\

Let $w=i_*(\alpha)$ for some $\alpha\in \pi_1(S_{n,k})$ with $\alpha\neq e$. We can write $W$ as a reduced word on the elements of $A$: $w=w_1w_2\cdots w_s$, where either $w_i \in A$ or $w_i^{-1} \in A$. Then by Lemma \ref{word lemma}, $w\neq e$. Hence $i_*$ is injective. \end{proof}
 
From the constructions and results in section \ref{non-separating case}, in particular Propositions \ref{orientable non-separating} and \ref{non-orientable non-separating}, we obtain Theorem \ref{all surfaces in H2}.

\section{The separating case}
\label{separating case}

In this section we construct a separating $\pi_1$-injective embedding of each compact, orientable surface with boundary, of genus $g$ and number  $b\geq 1$ of boundary components, in $H_2$. A separating surface in an orientable manifold must necessarily be orientable, so we consider only this case. We fix a basepoint and generators as in figure \ref{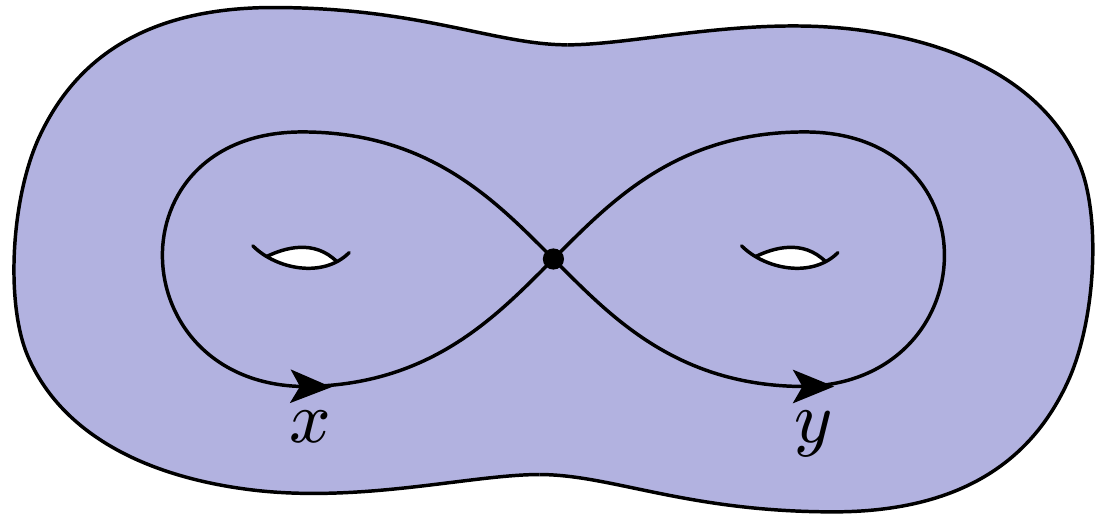}, which differs very slightly from the non-separating case.\\

\begin{figure}[htb]
\centering
\includegraphics[width=0.5\textwidth]{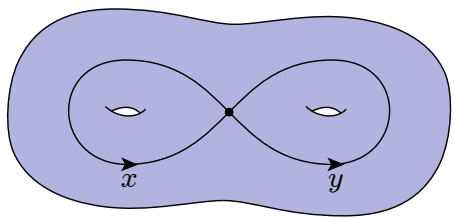}
\caption{Basepoint and generators for $\pi_1(H_2)$, separating case.}
\label{handlebody_sep_pi1.pdf}
\end{figure}

\begin{figure}[htb]
\centering 
\includegraphics[width=\textwidth]{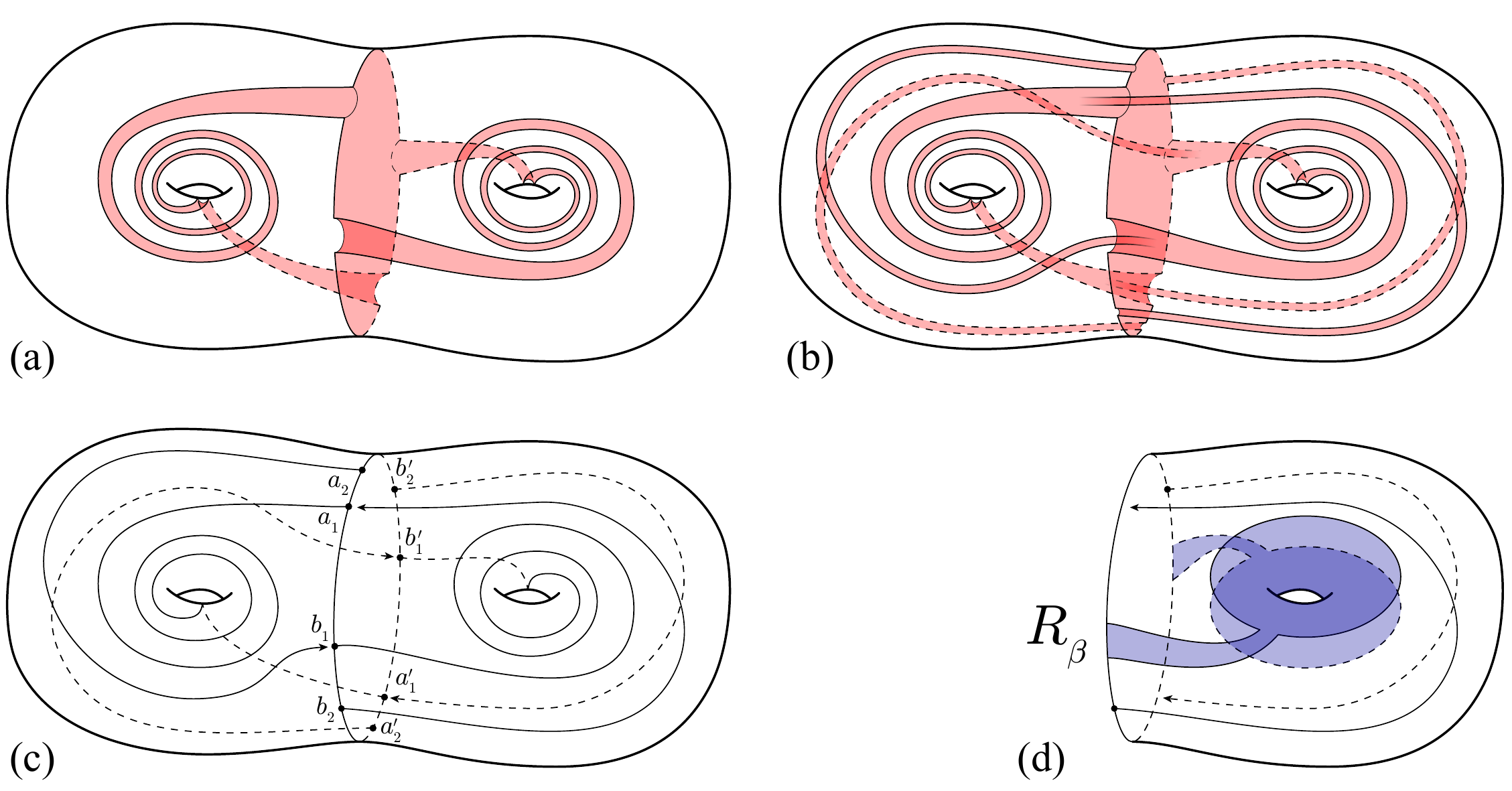}
\caption{Diagrams for the construction of separating surfaces.}
\label{handlebody_sep.pdf}
\end{figure}

\begin{figure}[htb]
\centering
\includegraphics[width=\textwidth]{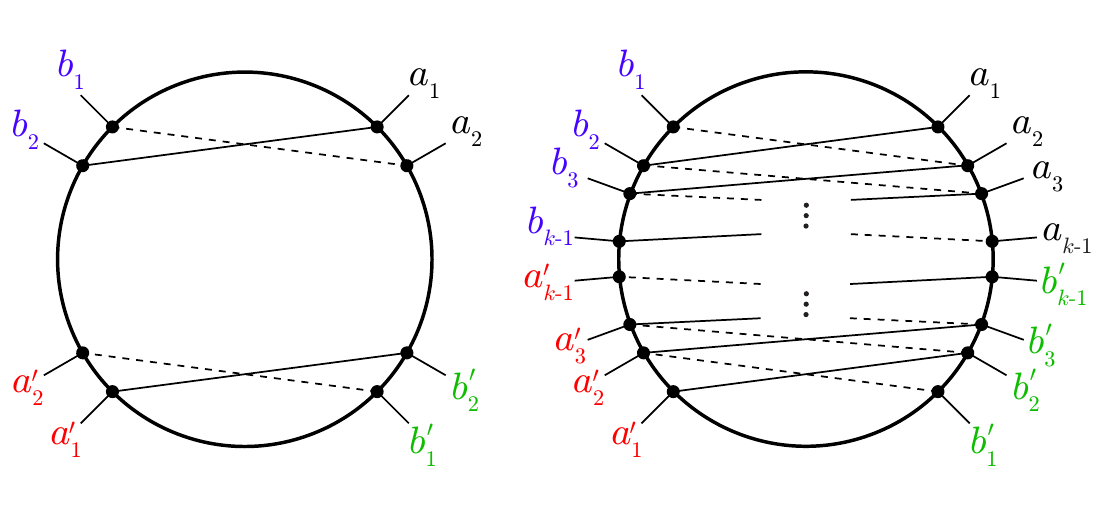}
\caption{On the left, band ends around the disk at the second step. The solid lines between $a_1$ and $b_2$, and between $a'_1$ and $b'_2$ correspond to the curves on the right side of figure \ref{handlebody_sep.pdf}(c). The dashed lines correspond to curves on the left side of that figure. On the right, we can continue this pattern as long as we want before the `changeover'.}
\label{handlebody_sep_disk_step2.pdf}
\end{figure}

\begin{figure}[htbp]
\centering
\includegraphics[width=\textwidth]{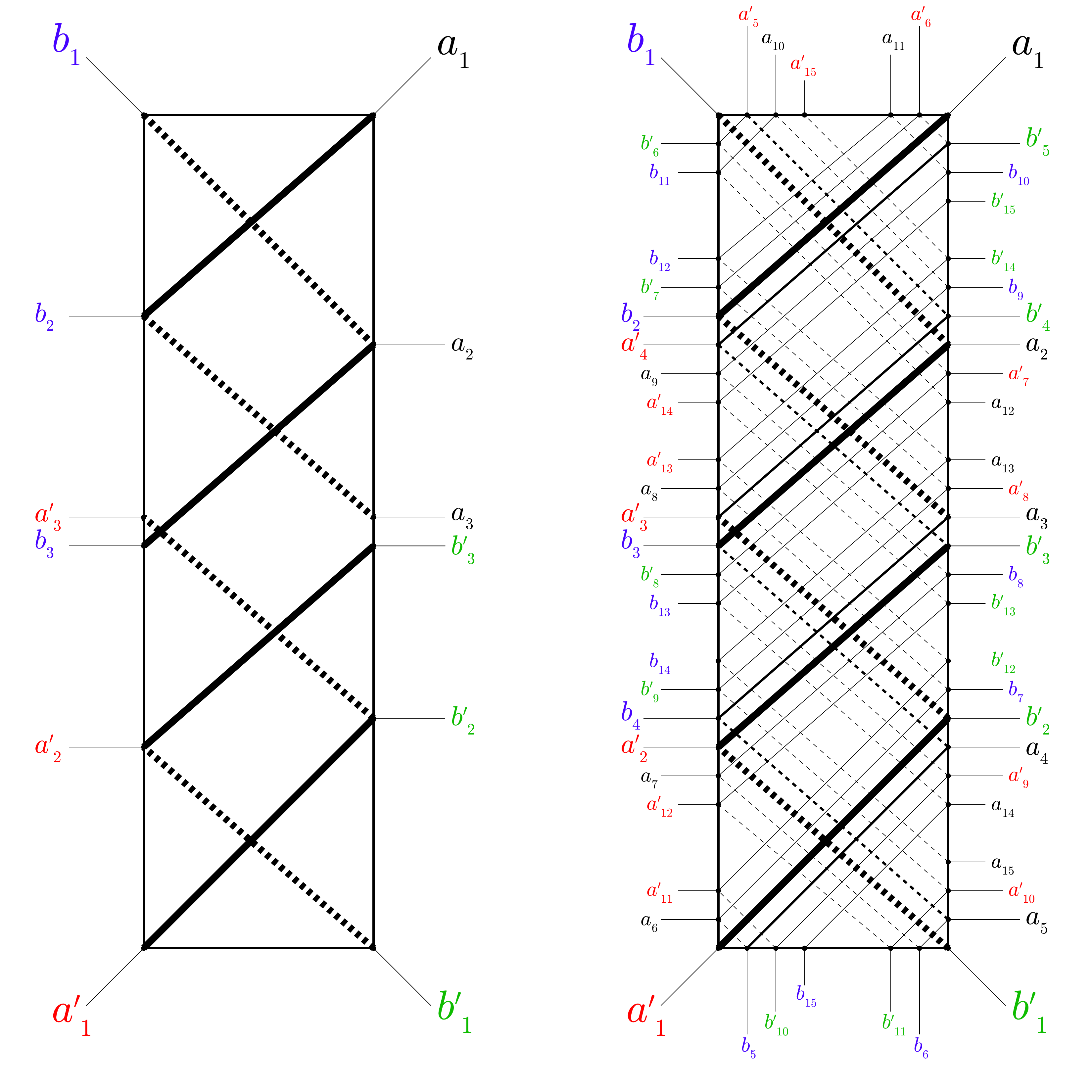}
\caption{Band ends around the disk at further steps. On the left, we continue the left diagram of figure \ref{handlebody_sep_disk_step2.pdf} with the changeover at step 3. On the right, we continue the left diagram by adding bands after the changeover. The lines correspond to curves on the boundary of $H_2$ as in figure \ref{handlebody_sep_disk_step2.pdf}.}
\label{sep_sphere_diagram_example2.pdf}
\end{figure}

Once again the surfaces consist of a single disk with various bands attached to its border contained within a small product neighbourhood of $\bdry H_2$. See figure \ref{handlebody_sep.pdf}(a,b). We construct each surface in a number of steps. Each step consists of adding two bands to the border of the disk. We refer to the bands added in step $i$ as $\alpha_i$ and $\beta_i$. In figure \ref{handlebody_sep.pdf}(a) band $\alpha_1$ is to the left and $\beta_1$ to the right.\\

In figure \ref{handlebody_sep.pdf}(b) we have the second step. Similarly to in the non-separating cases, new bands start at the disk, travel around the boundary of the handlebody until they return to the disk, travel through the tunnel formed by a previous band, re-emerge at the other end of the tunnel, travel around the boundary of the handlebody and finally meet the disk. In step $i+1$, $\alpha_{i+1}$ travels through the tunnel formed by $\beta_i$ and $\beta_{i+1}$ travels through the tunnel formed by $\alpha_i$. This means that the $\alpha_i$ bands always meet the disk approaching from the left, and the $\beta_i$ bands from the right.\\

In figure \ref{handlebody_sep.pdf}(c) we see the second step in schematic form, with the end points of the bands labelled. The band $\alpha_i$ has endpoints $a_i$ and $a'_i$ and $\beta_i$ has endpoints $b_i$ and $b'_i$. The end $a_1$ of $\alpha_1$ is on the front side of the handlebody as we view it, $a'_1$ is on the back side and similarly for $b_1$ and $b'_1$. For further bands, similarly to in the non-separating cases, we choose the label $a_{i+1}$ to be the end of $\alpha_{i+1}$ nearer to the end of $\beta_i$ labelled $b_i$. Here ``nearer'' means in traveling along the band $\alpha_{i+1}$ towards the tunnel formed by $\beta_i$. Similarly for the labelling of $b_{i+1}$ and $b'_{i+1}$.\\

As we add further bands, we will never add new bands that pass through the subset of the right side of the boundary of the handlebody, $R_\beta$, shown shaded in figure \ref{handlebody_sep.pdf}(d). The region $R_\beta$ contains the core curve of $\beta_1$. The complementary region of $R_\beta$ in the half of the boundary of the handlebody shown in figure \ref{handlebody_sep.pdf}(d) is a disk. This disk has boundary containing all of the boundary of the starting disk of our surfaces apart from a small neighbourhood of the endpoints of $\beta_1$. Thus, we can encode the data of the curves on the right side of figure \ref{handlebody_sep.pdf}(c) as curves drawn on a disk with the same boundary as the starting disk of our surfaces. We represent this disk in figure \ref{handlebody_sep_disk_step2.pdf}. The solid curves we draw on this disk will always cross the line between $b_1$ and $b'_1$ (not shown in the figure), as they have to travel around the handle. The curves cannot cross each other, for otherwise the corresponding surface would have self-intersections.\\  

There is a similar region $R_\alpha$ (not shown) on the left side, containing $\alpha_1$, and the above observations all go through similarly. We get a second disk for curves on the left side of the handlebody, and we draw them on the same picture in figure \ref{handlebody_sep_disk_step2.pdf} as dashed lines. In the right diagram of figure \ref{handlebody_sep_disk_step2.pdf} we continue adding bands, and can add as many as we want following this pattern: adding ends moving from the top of the disk down and from the bottom of the disk up but not yet meeting in the middle.\\ 

\begin{figure}[htbp]
\centering
\includegraphics[width=\textwidth]{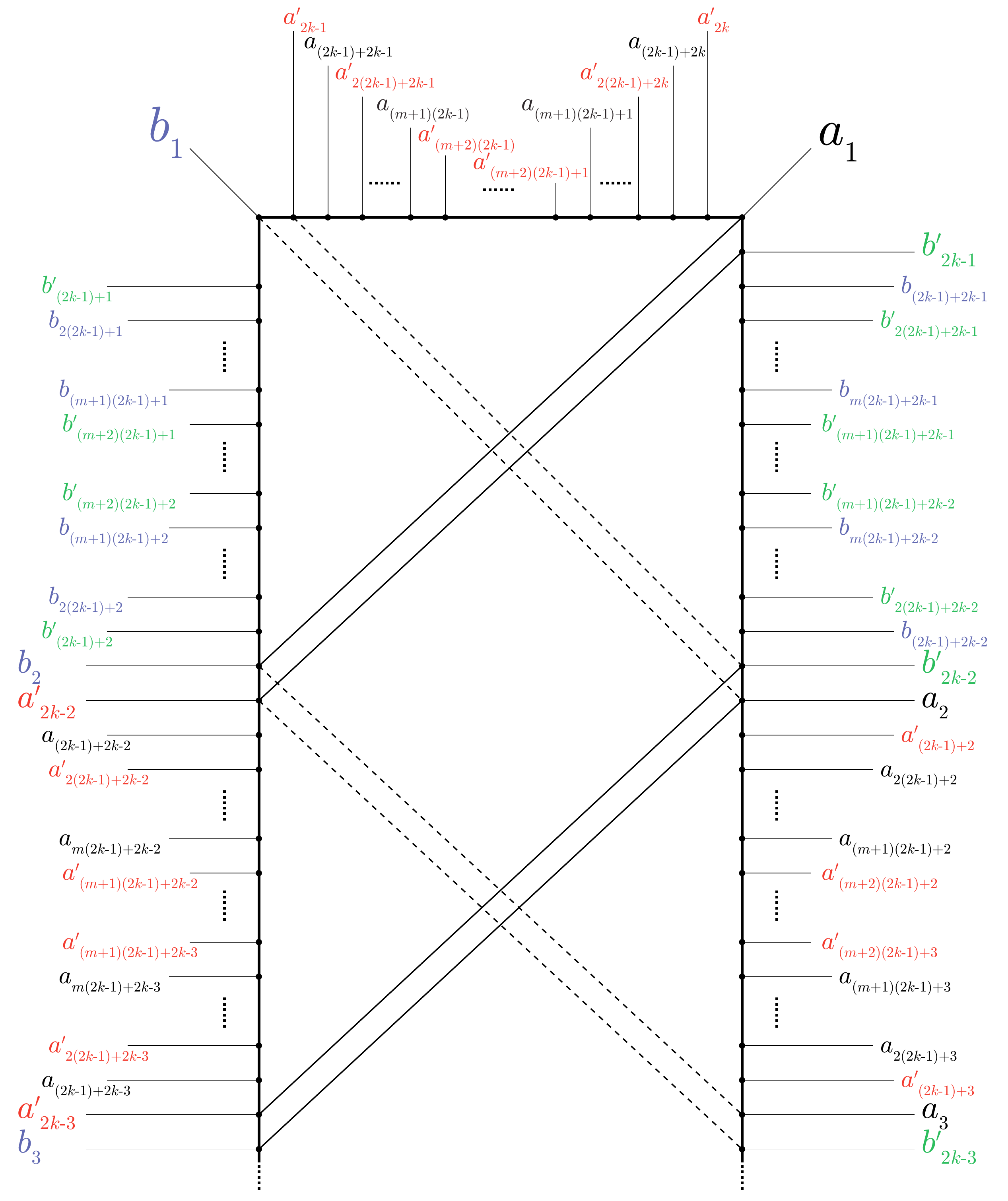}
\caption{The picture at the top of the disk for odd $k$. }
\label{sep_sphere_k_odd_top.pdf}
\end{figure}

\begin{figure}[htbp]
\centering
\includegraphics[width=\textwidth]{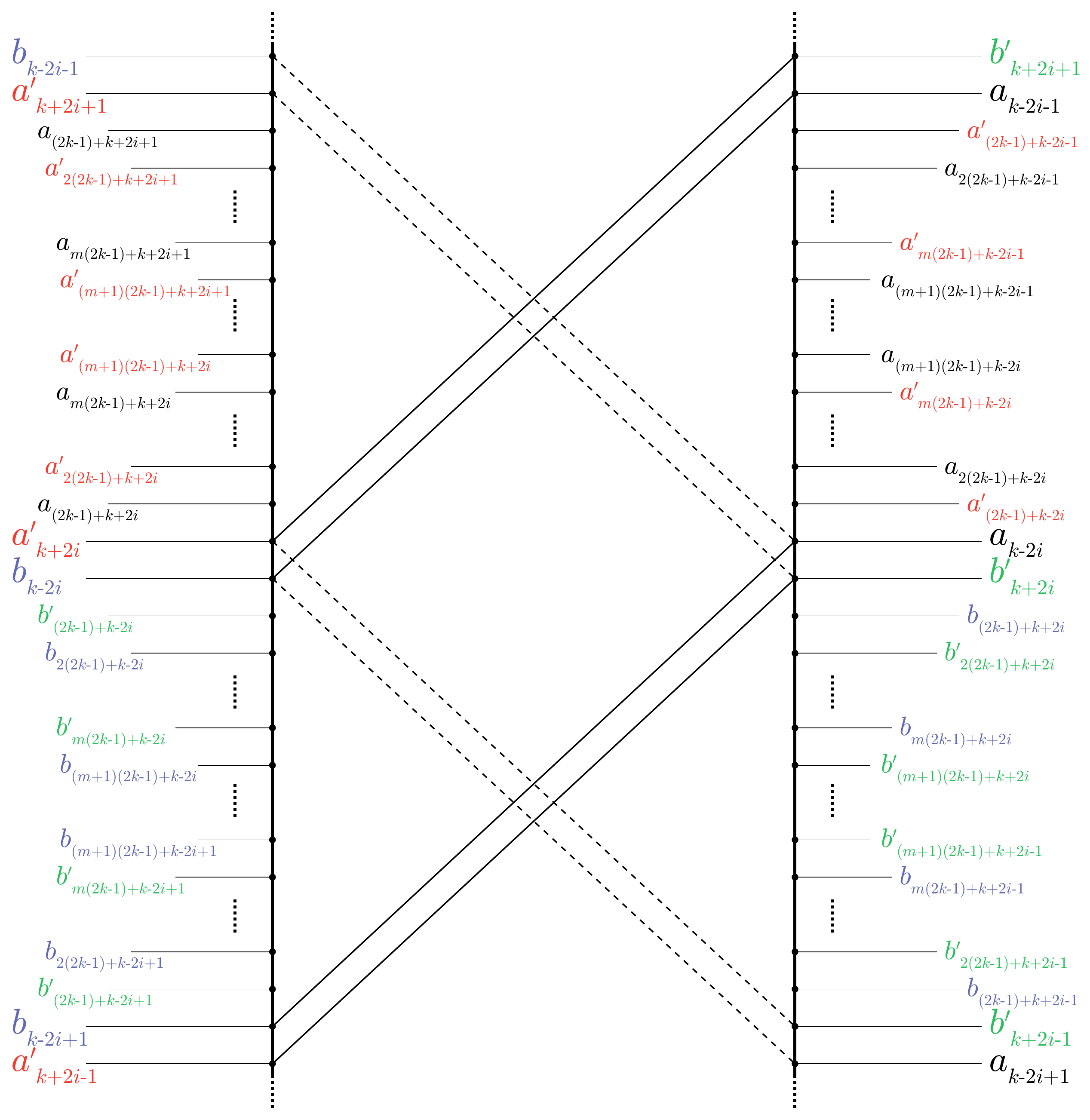}
\caption{The picture of a middle section of the disk for odd $k$.}
\label{sep_sphere_k_odd_middle.pdf}
\end{figure}

\begin{figure}[htbp]
\centering
\includegraphics[width=\textwidth]{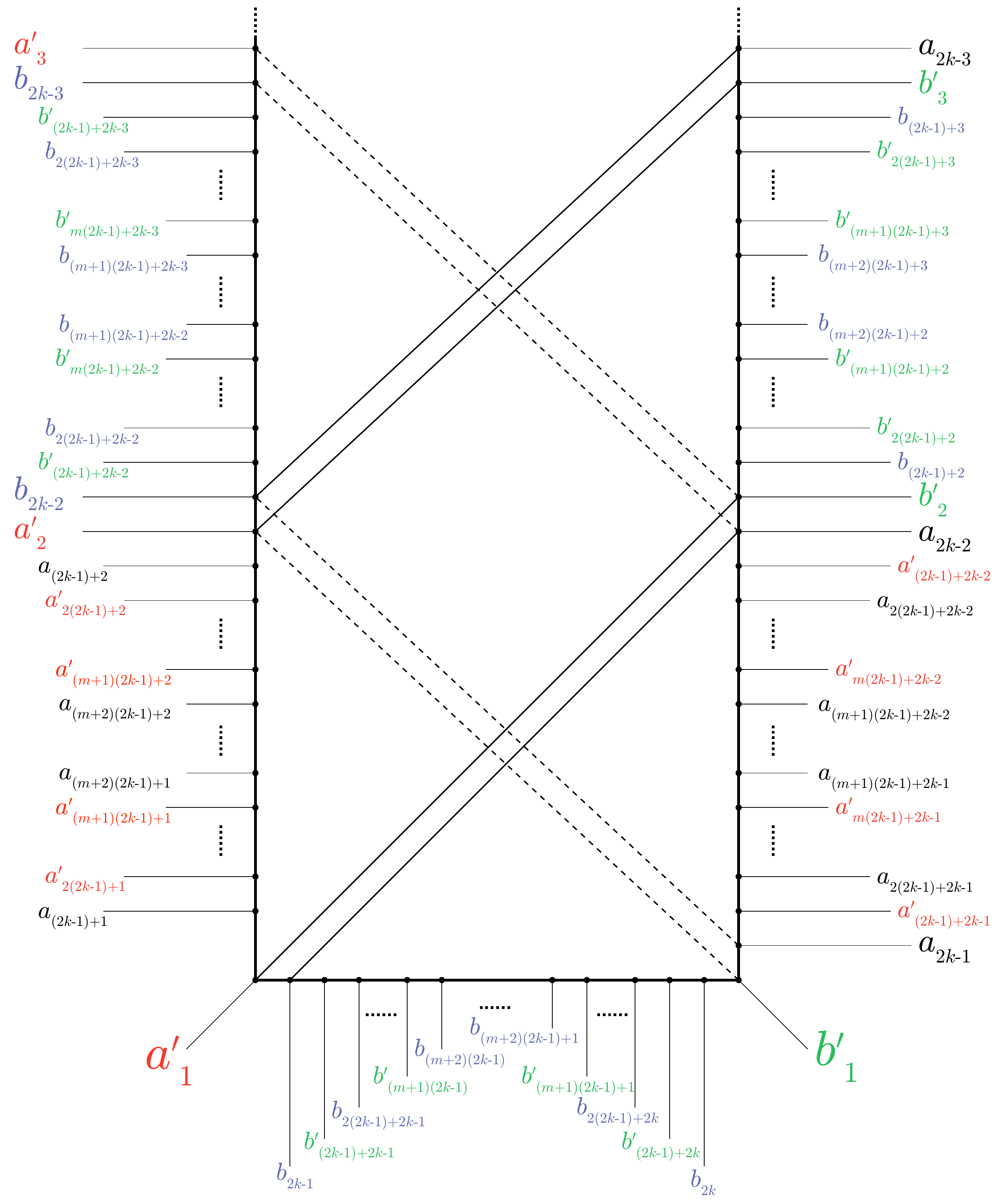}
\caption{The picture at the bottom of the disk for odd $k$. }
\label{sep_sphere_k_odd_bottom.pdf}
\end{figure}

In figure \ref{sep_sphere_diagram_example2.pdf} we modify the picture of the disk again to a rectangular form. Here $a_1, a'_1, b_1$ and $b'_1$ are at the corners of the rectangle so the  solid lines cross over the diagonal line from top left to bottom right, and dashed lines cross over the diagonal from top right to bottom left. In the left diagram we have added up until $\alpha_3$ and $\beta_3$, where we again have a form of {\bf changeover} analogous to in the non-separating case: we cross the band end $a'_3$, of $\alpha_3$, with the band end $b_3$, of $\beta_3$. Again, we can continue the pattern before the changeover, back and forth from the top and from the bottom as many times as we want, without meeting in the middle until, in this example, the third step. As in the non-separating case, we add genus up until the changeover, and add boundary components afterwards, again by adding pairs of bands. In general, the changeover occurs at step $k\geq 2$, where we follow a similar pattern: when adding the bands $\alpha_k$ and $\beta_k$ we cross the band end $a'_k$ of $\alpha_k$ with the band end $b_k$ of $\beta_k$. After the changeover we no longer have a choice for where the ends of the bands are to be added and must follow the pattern. See figures  \ref{sep_sphere_k_odd_top.pdf}, \ref{sep_sphere_k_odd_middle.pdf} and \ref{sep_sphere_k_odd_bottom.pdf} for the general case when $k$ is odd. When $k$ is even we have a similar situation, in fact we can convert the figure to that case by reflecting the top and bottom end diagrams along a vertical axis, swapping solid and dashed lines, and swapping all $a$s with $b$s. The indices of the labels, whether or not they have a prime, and the entire middle diagram are all unchanged.\\

We can continue adding pairs of bands in the pattern shown in figures \ref{sep_sphere_k_odd_top.pdf}, \ref{sep_sphere_k_odd_middle.pdf} and \ref{sep_sphere_k_odd_bottom.pdf} indefinitely, but for the construction of a particular surface we stop at some point. Let $S_{n,k}$ be the surface described above using a total of $n$ pairs of bands and doing the changeover at step $k\geq 2$. At the very end of the construction we may need to add only one of the two next bands in the same pattern as the general construction, and we let $S_{n,k}'$ to be the surface obtained from $S_{n,k}$ by adding the extra band $\alpha_{n+1}$ to $S_{n,k}$.

\begin{prop}\label{sep genus and boundary}
The surface $S_{n,k}$ ($S_{n,k}'$) has genus $g=k-1$ and $2n-2g+1$ (resp., $2n-2g+2$) boundary components.
\end{prop}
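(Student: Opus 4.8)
The plan is to mimic the proof of Proposition~\ref{or sep genus and boundary}: track the number of boundary components step by step with Lemma~\ref{adding bands}, and then recover the genus from the Euler characteristic. Write $\Sigma_s$ for the surface obtained after attaching the first $s$ bands in the construction, so that $\Sigma_{2i}$ is the surface after completing step $i$ and $\Sigma_0$ is the starting disk, with $|\bdry\Sigma_0|=1$. Since $\bdry H_2$ is orientable and the ends of every band meet the disk consistently (the $\alpha$-ends always from the left, the $\beta$-ends always from the right), every band has an even number of half twists; in particular $S_{n,k}$ and $S_{n,k}'$ are orientable, so their genus is determined by $\chi=2-2g-b$. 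Also $\chi(S_{n,k})=1-2n$ (a disk with $2n$ bands) and $\chi(S_{n,k}')=-2n$.

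First I would treat the genus-producing steps $1,\dots,k-1$ and prove $|\bdry\Sigma_{2i}|=1$ for $0\le i\le k-1$ by induction on $i$. If $|\bdry\Sigma_{2i}|=1$, then attaching $\alpha_{i+1}$ (both ends on the unique boundary component, even number of half twists) gives $|\bdry\Sigma_{2i+1}|=2$ by Lemma~\ref{adding bands}(b); next one checks from the construction that the two ends of $\beta_{i+1}$ lie on the two distinct boundary components of $\Sigma_{2i+1}$, so Lemma~\ref{adding bands}(a) yields $|\bdry\Sigma_{2i+2}|=1$. Hence $|\bdry\Sigma_{2(k-1)}|=1$.

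Next, for the remaining steps $k,\dots,n$ I would verify from the construction, using the changeover at step $k$ and the patterns in figures~\ref{sep_sphere_k_odd_top.pdf}, \ref{sep_sphere_k_odd_middle.pdf} and \ref{sep_sphere_k_odd_bottom.pdf}, that both ends of each of the bands $\alpha_k,\beta_k,\alpha_{k+1},\beta_{k+1},\dots,\alpha_n,\beta_n$ lie on a single boundary component; as each has an even number of half twists, Lemma~\ref{adding bands}(b) says each one raises the boundary count by $1$. Since these steps contribute $2(n-k+1)$ bands, $|\bdry S_{n,k}|=1+2(n-k+1)=2n-2k+3=2n-2(k-1)+1$. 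Then $\chi(S_{n,k})=1-2n$ together with $2-2g-(2n-2k+3)=1-2n$ forces $g=k-1$. For $S_{n,k}'$, the extra band $\alpha_{n+1}$ again has both ends on one boundary component with an even number of half twists, so $|\bdry S_{n,k}'|=|\bdry S_{n,k}|+1=2n-2(k-1)+2$, and $\chi(S_{n,k}')=-2n$ gives $g=k-1$ in the same way.

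The hard part will be the combinatorial checks hidden inside the phrase ``one checks from the construction'': that during the genus phase $\beta_{i+1}$ straddles exactly the two boundary components created by $\alpha_{i+1}$, and that after the changeover all the ends of $\alpha_k,\beta_k,\dots,\alpha_n,\beta_n$ (and of $\alpha_{n+1}$ in the $S_{n,k}'$ case) sit on one common boundary component. These are exactly the facts the schematic disk pictures are designed to make transparent, and they carry all the content of the proposition; the rest is Euler-characteristic arithmetic identical to the non-separating case.
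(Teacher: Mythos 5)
Your proposal is correct and follows essentially the same route as the paper's proof: induction via Lemma \ref{adding bands} to show a single boundary component persists through step $k-1$, then counting one new boundary component per band after the changeover, then recovering the genus from $\chi = 1-2n$. The combinatorial facts you flag as ``checked from the construction'' are treated at exactly the same level of detail in the paper, which likewise defers them to the schematic figures.
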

\begin{proof}
Denote by $\Sigma_s$ the surface obtained from adding the first $s$ steps (pairs of bands) in the construction of $S_{n,k}$. As in the proof of Proposition \ref{or sep genus and boundary} all bands added have an even number of half twists.
The surface $\Sigma_0$ is the starting disk, so $|\partial\Sigma_0|=1$. Let $i$  be such that $i< k-1$. Assume that $|\partial\Sigma_i|=1$. The next band added is $\alpha_{i+1}$. As can be checked by construction, the band $\beta_{i+1}$ has its ends on the two boundary components defined by $\alpha_{i+1}$. Then $|\partial\Sigma_{i+1}|=1$. By induction on $i$, we have $|\partial\Sigma_{k-1}|=1$.\\

At step $k$ we do the changeover. By the construction of $S_{n,k}$, we can check that the bands ends of $\alpha_{k},\ldots,\alpha_n$ and of $\beta_{k},\ldots,\beta_n$ are on the same boundary component. This means, by Lemma \ref{adding bands}(b), that each of these bands added increases the number of boundary components by one. Therefore, the number of boundary components of $S_{n,k}$ is $2n-2(k-1)+1$. As the Euler characteristics of $S_{n,k}$ is $1-2n$ the surface $S_{n,k}$ has genus $k-1$.\\

The surface $S_{n,k}'$ is obtained from $S_{n,k}$ by adding the band $\alpha_{n+1}$ which increases the number of boundary components by one.
\end{proof}

\begin{figure}[htb]
\centering
\includegraphics[width=\textwidth]{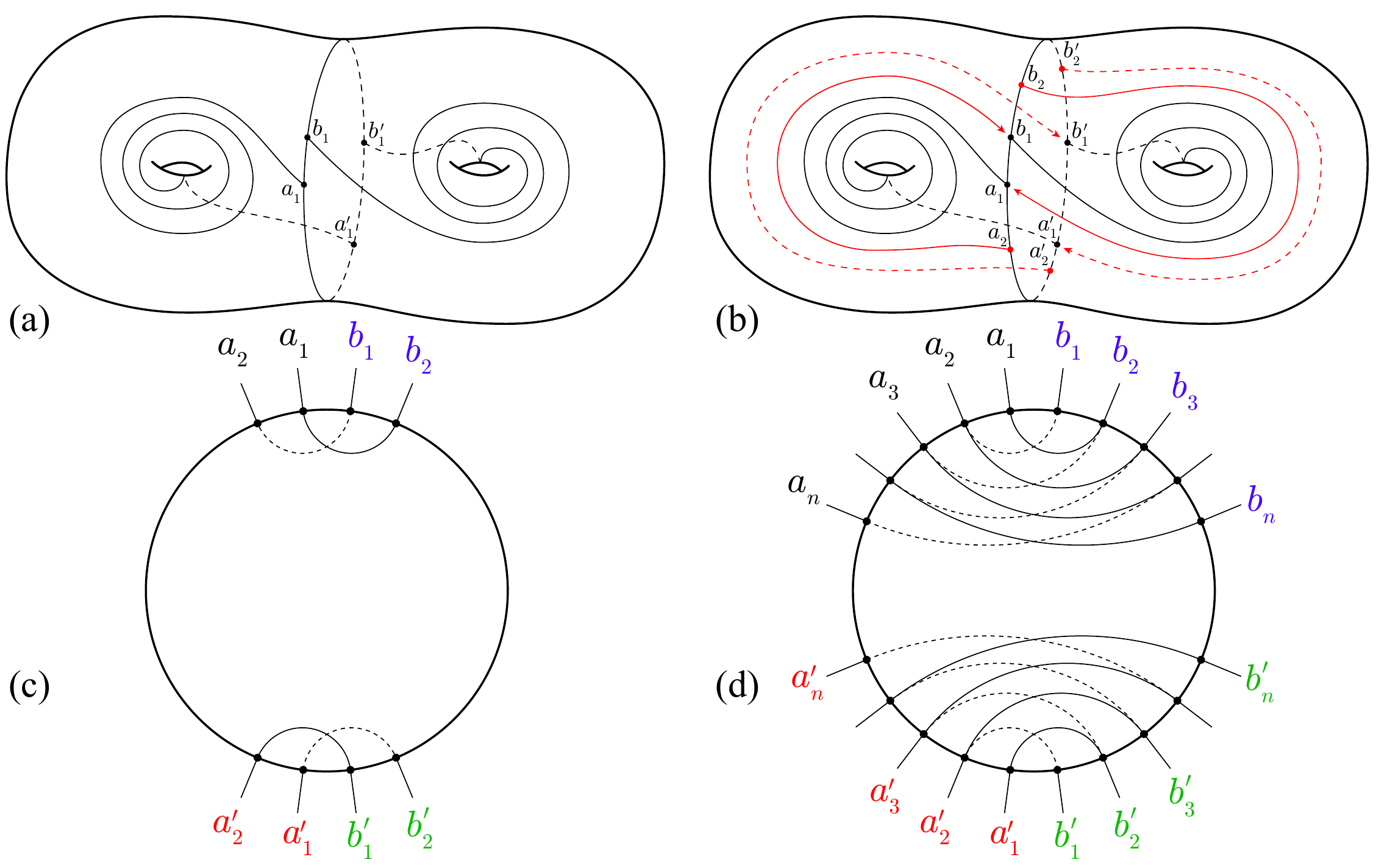}
\caption{Diagrams for the construction of separating surfaces of genus zero.}
\label{handlebody_sep_genus0.pdf}
\end{figure}

In this way we can get any genus $g \geq 1$ and any number of boundary components $b \geq 1$. To get genus $g=0$ we need a slightly different starting configuration. See figure \ref{handlebody_sep_genus0.pdf}(a). We start with the same disk as in the general separating case and the first pair of bands, $\alpha_1$ and $\beta_1$ again go three times around the $x$ and $y$ handles. However the ends do not alternate around the disk: $a_1$ and $a'_1$ are adjacent. The pattern continues in figure \ref{handlebody_sep_genus0.pdf}(b), with schematic diagrams in figure \ref{handlebody_sep_genus0.pdf}(c,d) analogous to figure \ref{handlebody_sep_disk_step2.pdf}. We can continue indefinitely and there are never any choices to make. We denote the surface obtained by adding $n$ bands following this procedure by $S_{n,0}$ (and, by $S_{n,0}'$ the surface obtained from $S_{n,0}$ by adding the band $\alpha_{n+1}$).\\

\begin{prop}
The surface $S_{n,0}$ ($S_{n,0}'$) has genus $0$ and $2n+1$ (resp., $2n+2$) boundary components.
\end{prop}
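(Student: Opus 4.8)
The plan is to mimic the proof of Proposition~\ref{sep genus and boundary}, using the same Euler-characteristic bookkeeping together with Lemma~\ref{adding bands}, but now tracking the fact that the genus-zero construction never does a changeover, so the two ends of \emph{every} pair of bands lie on the same boundary component. First I would introduce $\Sigma_s$ for the surface obtained after adding the first $s$ bands (not pairs, since at the end of $S_{n,0}'$ we may add a single extra band $\alpha_{n+1}$), noting as in the previous proofs that all bands have an even number of half twists because $\bdry H_2$ is orientable and, in the genus-zero configuration of figure~\ref{handlebody_sep_genus0.pdf}, the ends of each band meet the starting disk from the same direction.

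Next I would run the induction. $\Sigma_0$ is the starting disk, so $|\bdry\Sigma_0| = 1$. The key structural point, visible from figure~\ref{handlebody_sep_genus0.pdf}(c,d), is that because $a_1$ and $a'_1$ (and $b_1,b'_1$) are adjacent rather than alternating, and the pattern continues with no changeover, each new band $\alpha_i$ or $\beta_i$ has \emph{both} of its ends on the same boundary component of the surface built so far. Hence by Lemma~\ref{adding bands}(b), adding each band increases $|\bdry|$ by exactly one: after the $2n$ bands of $S_{n,0}$ we get $|\bdry S_{n,0}| = 1 + 2n$, and after the one further band $\alpha_{n+1}$ we get $|\bdry S_{n,0}'| = 2n+2$. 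Finally, the Euler characteristic: $S_{n,0}$ is a disk with $2n$ bands attached, so $\chi(S_{n,0}) = 1 - 2n$; combining with $b = 2n+1$ boundary components and the formula $\chi = 2 - 2g - b$ for an orientable surface forces $g = 0$. Likewise $\chi(S_{n,0}') = -2n$ with $b = 2n+2$ gives $g = 0$.

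The only genuinely non-routine step is verifying the structural claim that in the genus-zero construction no band ever has its two ends on different boundary components — that is, that the adjacency of the starting ends propagates and no ``changeover'' ever occurs. This is exactly what distinguishes the genus-zero configuration from the general $S_{n,k}$, and it is read off from figures~\ref{handlebody_sep_genus0.pdf}(c,d): each added pair of bands continues concentrically outward around the $x$ and $y$ handles, so the new ends are always inserted into the boundary component containing the previously outermost ends. Granting this observation (which the paper already asserts in the text preceding the statement, ``We can continue indefinitely and there are never any choices to make''), the proof is a one-paragraph induction, and I would simply write ``The argument is analogous to the proof of Proposition~\ref{sep genus and boundary}, now using that every band has both ends on the same boundary component.''
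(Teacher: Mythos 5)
Your proposal is correct and takes essentially the same route as the paper, whose proof consists of the single sentence that the argument is similar to that of Proposition \ref{sep genus and boundary}; you have simply made explicit the one point that changes, namely that with $a_1$ and $a'_1$ adjacent and no changeover every band has both ends on the same boundary component, so Lemma \ref{adding bands}(b) applies at every step and the Euler-characteristic count forces $g=0$.
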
 
\begin{proof}
The argument is similar to the proof of Proposition \ref{sep genus and boundary}.
\end{proof}

Suppose that $i:S_{n,k}\rightarrow H_2$ is the inclusion map as constructed above and $i_*: \pi_1(S_{n,k})\rightarrow \pi_1(H_2)$ the induced map by $i$. As mentioned before, we denote by $x$ and $y$ two generators of $\pi_1(H_2)$. Following the surface construction the generators of $\pi_1(S_{n,k})$ are $\alpha_1,\alpha_2,\ldots ,\alpha_{n},\beta_1, \beta_2,\ldots,\beta_{n} $.\\

\begin{align*}
\intertext{If $k= 0$ we have:}
i_*(\alpha_1)&=x^3\\
i_*(\beta_1)&=y^3\\
i_*(\alpha_2)&=x^{-1}i_*(\beta_1)x\\
i_*(\beta_2)&=y^{-1}i_*(\alpha_1)y\\
&\vdots\\
i_*(\alpha_n)&=x^{-1}i_*(\beta_{n-1})x\\
i_*(\beta_n)&=y^{-1}i_*(\alpha_{n-1})y\\
\intertext{If $k\geq 2$ assume that $n=q(2k-1)+r$, where $0\leq r< 2k-1$. When $k$ is odd (similarly for even $k$) we have:}
i_*(\alpha_1)&=x^3\\
i_*(\beta_1)&=y^3\\
i_*(\alpha_2)&=xi_*(\beta_1)x\\
i_*(\beta_2)&=yi_*(\alpha_1)y\\
&\vdots\displaybreak[0]\\
i_*(\alpha_{2k-2})&=xi_*(\beta_{2k-3})x\\
i_*(\beta_{2k-2})&=yi_*(\alpha_{2k-3})y\\
i_*(\alpha_{2k-1})&=(xy)^{k-1}x^3(yx)^{k-1}\\
i_*(\beta_{2k-1})&=(yx)^{k-1}y^3(xy)^{k-1}\\
i_*(\alpha_{(2k-1)+1})&=x^{-1}i_*(\beta_{2k-1})x\\
i_*(\beta_{(2k-1)+1})&=yi_*(\alpha_{2k-1})y^{-1}\\
i_*(\alpha_{(2k-1)+2})&=x^{-1}i_*(\beta_{(2k-1)+1})x^{-1}\\
i_*(\beta_{(2k-1)+2})&=y^{-1}i_*(\alpha_{(2k-1)+1})y^{-1}\\
&\vdots\displaybreak[0]\\ 
i_*(\alpha_{(2k-1)+2k-2})&=x^{-1}i_*(\beta_{(2k-1)+2k-3})x^{-1}\\
i_*(\beta_{(2k-1)+2k-2})&=y^{-1}i_*(\alpha_{(2k-1)+2k-3})y^{-1}\\
i_*(\alpha_{2(2k-1)})&=(x^{-1}y^{-1})^{k-1}x^{-1}(yx)^{k-1}y^3(xy)^{k-1}x(y^{-1}x^{-1})^{k-1}\\
i_*(\beta_{2(2k-1)})&=(y^{-1}x^{-1})^{k-1}y(xy)^{k-1}x^3(yx)^{k-1}y^{-1}(x^{-1}y^{-1})^{k-1}\\
i_*(\alpha_{2(2k-1)+1})&=x^{-1}i_*(\beta_{2(2k-1)})x\\
i_*(\beta_{2(2k-1)+1})&=yi_*(\alpha_{2(2k-1)})y^{-1}\\
i_*(\alpha_{2(2k-1)+2})&=xi_*(\beta_{2(2k-1)+1})x\\
i_*(\beta_{2(2k-1)+2})&=yi_*(\alpha_{2(2k-1)+1})y\\
&\vdots\displaybreak[0]\\
i_*(\alpha_{(2j-2)(2k-1)+2k-2})&=xi_*(\beta_{(2j-2)(2k-1)+2k-3})x\\
i_*(\beta_{(2j-2)(2k-1)+2k-2})&=yi_*(\alpha_{(2j-2)(2k-1)+2k-3})y\\
i_*(\alpha_{(2j-1)(2k-1)})&=(xy)^{k-1}[x^{-1}(y^{-1}x^{-1})^{k-1}y(xy)^{k-1}]^{j-1}x^3\\
&\quad [(yx)^{k-1}y^{-1}(x^{-1}y^{-1})^{k-1}x]^{j-1}(yx)^{k-1}\\
i_*(\beta_{(2j-1)(2k-1)})&=(yx)^{k-1}[y(x^{-1}y^{-1})^{k-1}x^{-1}(yx)^{k-1}]^{j-1}y^3\\
&\quad[(xy)^{k-1}x(y^{-1}x^{-1})^{k-1}y^{-1}]^{j-1}(xy)^{k-1}\\
i_*(\alpha_{(2j-1)(2k-1)+1})&=x^{-1}i_*(\beta_{(2j-1)(2k-1)})x\\
i_*(\beta_{(2j-1)(2k-1)+1})&=yi_*(\alpha_{(2j-1)(2k-1)})y^{-1}\\
i_*(\alpha_{(2j-1)(2k-1)+2})&=x^{-1}i_*(\beta_{(2j-1)(2k-1)+1})x^{-1}\\
i_*(\beta_{(2j-1)(2k-1)+2})&=y^{-1}i_*(\alpha_{(2j-1)(2k-1)+1})y^{-1}\\
&\vdots\displaybreak[0]\\
i_*(\alpha_{(2j-1)(2k-1)+2k-2})&=x^{-1}i_*(\beta_{(2j-1)(2k-1)+2k-3})x^{-1}\\
i_*(\beta_{(2j-1)(2k-1)+2k-2})&=y^{-1}i_*(\alpha_{(2j-1)(2k-1)+2k-3})y^{-1}\\
i_*(\alpha_{(2j)(2k-1)})&=(x^{-1}y^{-1})^{k-1}x^{-1}(yx)^{k-1}[y(x^{-1}y^{-1})^{k-1}x^{-1}(yx)^{k-1}]^{j-1}y^3\\
&\quad[(xy)^{k-1}x(y^{-1}x^{-1})^{k-1}y^{-1}]^{j-1}(xy)^{k-1}x(y^{-1}x^{-1})^{k-1}\\
i_*(\beta_{(2j)(2k-1)})&=(y^{-1}x^{-1})^{k-1}y(xy)^{k-1}[x^{-1}(y^{-1}x^{-1})^{k-1}y(xy)^{k-1}]^{j-1}x^3\\
&\quad[(yx)^{k-1}y^{-1}(x^{-1}y^{-1})^{k-1}x]^{j-1}(yx)^{k-1}y^{-1}(x^{-1}y^{-1})^{k-1}\\
i_*(\alpha_{(2j)(2k-1)+1})&=x^{-1}i_*(\beta_{(2j)(2k-1)})x\\
i_*(\beta_{(2j)(2k-1)+1})&=yi_*(\alpha_{(2j)(2k-1)})y^{-1}\\
i_*(\alpha_{(2j)(2k-1)+2})&=xi_*(\beta_{(2j)(2k-1)+1})x\\
i_*(\beta_{(2j)(2k-1)+2})&=yi_*(\alpha_{(2j)(2k-1)+1})y\\
&\vdots\displaybreak[0] \\
\intertext{Either $q$ is odd and we have:}
&\vdots\\
i_*(\alpha_{(q-1)(2k-1)+2k-2})&=xi_*(\beta_{(q-1)(2k-1)+2k-3})x\\
i_*(\beta_{(q-1)(2k-1)+2k-2})&=yi_*(\alpha_{(q-1)(2k-1)+2k-3})y\\
i_*(\alpha_{q(2k-1)})&=(xy)^{k-1}[x^{-1}(y^{-1}x^{-1})^{k-1}y(xy)^{k-1}]^{\frac{q-1}{2}}x^3\\
&\quad[(yx)^{k-1}y^{-1}(x^{-1}y^{-1})^{k-1}x]^{\frac{q-1}{2}}(yx)^{k-1}\\
i_*(\beta_{q(2k-1)})&=(yx)^{k-1}[y(x^{-1}y^{-1})^{k-1}x^{-1}(yx)^{k-1}]^{\frac{q-1}{2}}y^3\\
&\quad[(xy)^{k-1}x(y^{-1}x^{-1})^{k-1}y^{-1}]^{\frac{q-1}{2}}(xy)^{k-1}\\
i_*(\alpha_{q(2k-1)+1})&=x^{-1}i_*(\beta_{q(2k-1)})x\\
i_*(\beta_{q(2k-1)+1})&=yi_*(\alpha_{q(2k-1)})y^{-1}\\
i_*(\alpha_{q(2k-1)+2})&=x^{-1}i_*(\beta_{q(2k-1)+1})x^{-1}\\
i_*(\beta_{q(2k-1)+2})&=y^{-1}i_*(\alpha_{q(2k-1)+1})y^{-1}\\
&\vdots\\
i_*(\alpha_{q(2k-1)+r})&=x^{-1}i_*(\beta_{q(2k-1)+(r-1)})x^{-1}\\
i_*(\beta_{q(2k-1)+r})&=y^{-1}i_*(\alpha_{q(2k-1)+(r-1)})y^{-1}\\
\displaybreak[0]\\
\intertext{Or $q$ is even and we have:}
&\vdots\\
i_*(\alpha_{(q-1)(2k-1)+2k-2})&=xi_*(\beta_{(q-1)(2k-1)+2k-3})x\\
i_*(\beta_{(q-1)(2k-1)+2k-2})&=yi_*(\alpha_{(q-1)(2k-1)+2k-3})y\\
i_*(\alpha_{q(2k-1)})&=(x^{-1}y^{-1})^{k-1}x^{-1}(yx)^{k-1}[y(x^{-1}y^{-1})^{k-1}x^{-1}(yx)^{k-1}]^{\frac{q}{2}-1}y^3\\
&\quad[(xy)^{k-1}x(y^{-1}x^{-1})^{k-1}y^{-1}]^{\frac{q}{2}-1}(xy)^{k-1}x(y^{-1}x^{-1})^{k-1}\\
i_*(\beta_{q(2k-1)})&=(y^{-1}x^{-1})^{k-1}y(xy)^{k-1}[x^{-1}(y^{-1}x^{-1})^{k-1}y(xy)^{k-1}]^{\frac{q}{2}-1}x^3\\
&\quad[(yx)^{k-1}y^{-1}(x^{-1}y^{-1})^{k-1}x]^{\frac{q}{2}-1}(yx)^{k-1}y^{-1}(x^{-1}y^{-1})^{k-1}\\
i_*(\alpha_{q(2k-1)+1})&=x^{-1}i_*(\beta_{q(2k-1)})x\\
i_*(\beta_{q(2k-1)+1})&=yi_*(\alpha_{q(2k-1)})y^{-1}\\
i_*(\alpha_{q(2k-1)+2})&=xi_*(\beta_{q(2k-1)+1})x\\
i_*(\beta_{q(2k-1)+2})&=yi_*(\alpha_{q(2k-1)+1})y\\
&\vdots\\
i_*(\alpha_{q(2k-1)+r})&=xi_*(\beta_{q(2k-1)+(r-1)})x\\
i_*(\beta_{q(2k-1)+r})&=yi_*(\alpha_{q(2k-1)+(r-1)})y\\
\end{align*}\\
 
For the surface $S_{n,k}'$ we continue by adding the word $i_*(\alpha_{n+1})$ similarly.\\
 
\begin{lemma}\label{word lemma 2}
Let $F_2$ be the free group with two generators, $g$ and $h$, with identity element  $e$. Consider 

$$\begin{array}{lcl}
A&=&\{v_1g^pv_1', v_2g^pv_2',\ldots, v_{n_1}g^pv_{n_1}'\}\\
B&=&\{u_1h^pu_1', u_2h^pu_2',\ldots, u_{n_2}h^pu_{n_2}'\} \,\text{, with $p\geq 3$,}\\
\end{array}$$
where $v_ig^pv_i'$, for $i=1,\ldots,n_1$, and $u_ih^pu'_i$, for $i=1,\ldots, n_2$ are reduced words on $g$ and $h$,  and $v_i,\,v_i',u_i \text{ and }u'_i$ are words where $g$, or its inverse, alternates with $h$, or its inverse.  Furthermore, $v_i$ ($u_i$) ends with $h$ or $h^{-1}$, (resp., $g$ or $g^{-1}$ ), and $v_i'$ ($u'_i$) starts with $h$ or $h^{-1}$ (resp., $g$ or $g^{-1}$). Assume also that $L(v_{i+1})>L(v_i)$, $L(v_{i+1}')>L(v_i')$, for $i=1,2,\ldots,n_1-1$ and $L(u_{i+1})>L(u_i)$, $L(u_{i+1}')>L(u_i')$, for $i=1,2,\ldots,n_2-1$. Take $w$ to be a reduced word on the elements of $A$ or $B$. That is, $w$ is expressed as a product of elements of $A$ or $B$, or inverses of those elements, with no consecutive cancelling terms. Then $w\neq e$. 

\end{lemma}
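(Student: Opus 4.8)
The plan is to adapt the argument of Lemma \ref{word lemma} to the two-generator-core situation, where the ``core blocks'' $g^p$ and $h^p$ come in two flavors and every element of $A$ is bracketed by alternating words ending/starting in $h^{\pm 1}$, while every element of $B$ is bracketed by alternating words ending/starting in $g^{\pm 1}$. Write $w = w_1 w_2 \cdots w_s$ as a reduced product, with each $w_i$ (or $w_i^{-1}$) lying in $A \cup B$; if $s = 1$ there is nothing to prove, so assume $s \geq 2$ and, for contradiction, that $w = e$. Each $w_i$ has the form $u_i \, z_i^{\epsilon_i p} \, u_i'$ with $z_i \in \{g, h\}$ (the same $z_i$ for both ends, since the elements of $A$ are $g$-cored and those of $B$ are $h$-cored), $\epsilon_i \in \{-1, 1\}$, and $u_i, u_i'$ (or their inverses) among the prescribed alternating words $v_j, v_j', u_j, u_j'$. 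Set $c_i = u_i' u_{i+1}$, exactly as in the proof of Lemma \ref{word lemma}, so that $w = u_1 z_1^{\epsilon_1 p} c_1 z_2^{\epsilon_2 p} c_2 \cdots c_{s-1} z_s^{\epsilon_s p} u_s'$.

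The key structural point is that a core power $z_i^{\epsilon_i p}$ can only be fully cancelled against an adjacent core power of the \emph{same} generator, and this forces a matching constraint that propagates. I would argue: if $c_i = e$ then $L(u_i') = L(u_{i+1})$; and if moreover the adjacent cores are of the same generator (i.e. $z_i = z_{i+1}$), then — because both ends of an $A$-element end/start in $h^{\pm 1}$ and both ends of a $B$-element in $g^{\pm 1}$ — having $z_i = z_{i+1}$ means $w_i$ and $w_{i+1}$ are drawn from the same list ($A$ or $B$) of nested words with strictly increasing lengths, so $L(u_i') = L(u_{i+1})$ together with opposite signs $\epsilon_i = -\epsilon_{i+1}$ would give $w_i = w_{i+1}^{-1}$, contradicting reducedness of $w$; hence $\epsilon_i = \epsilon_{i+1}$ and the two core powers do not cancel. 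If instead $z_i \neq z_{i+1}$, then $c_i$ separates a $g$-power from an $h$-power and these can never annihilate each other regardless of cancellation inside $c_i$. As in Lemma \ref{word lemma}, the only ways $c_i$ can partially cancel are the four cases (a)–(d): $c_i = e$ (whole of $u_i'$ and $u_{i+1}$ cancel, handled above), $c_i = h^{t_1}\cdots g^{\pm 1}$ or $g^{\pm 1} \cdots$ type remainders, or $c_i = $ a pure alternating remainder in the other generator — in every case at most one end-letter survives adjacent to each core, so no core power is consumed. Since $p \geq 3$, each $z_i^{\epsilon_i p}$ retains at least one letter after all cancellations, giving $L(w) \geq s \geq 2 > 0$, so $w \neq e$.

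\textbf{The main obstacle} I anticipate is bookkeeping the interaction between the two generators: in Lemma \ref{word lemma} there was a single core letter $g$, so ``same generator'' was automatic; here one must carefully verify that whenever two consecutive core powers are of the same generator, the two surrounding words $w_i, w_{i+1}$ genuinely come from the same increasing family (so that the length comparison $L(v_{i+1}) > L(v_i)$ or $L(u_{i+1}) > L(u_i)$ applies and rules out $w_i = w_{i+1}^{-1}$), and that when they are of different generators the cores simply cannot talk to each other. This is exactly the place where the hypothesis that $v_i$ ends in $h^{\pm 1}$ but $u_i$ ends in $g^{\pm 1}$ is used, and it is worth spelling out that a reduced word $w$ on $A \cup B$ can freely alternate between $A$-letters and $B$-letters without the cores ever cancelling across such a switch. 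Once that case analysis is in place, the counting argument $L(w) \geq s$ is the same as before, and the conclusion $w \neq e$ follows.
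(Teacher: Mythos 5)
Your argument is correct, and it reaches the conclusion by the same underlying mechanism as the paper (each central power $g^{\pm p}$ or $h^{\pm p}$ loses at most one letter to each neighbouring junction, so with $p\geq 3$ every core survives and $L(w)\geq s$), but it is organized differently. You decompose $w$ letter-by-letter into elements of $A\cup B$ and their inverses, and handle each junction $c_i=u_i'u_{i+1}$ directly, splitting on whether the two adjacent cores involve the same generator (in which case the strict length monotonicity within the relevant family rules out $w_i=w_{i+1}^{-1}$, exactly as in Lemma \ref{word lemma}) or different generators (in which case the cores simply cannot meet). The paper instead first groups $w$ into maximal blocks $w_1\cdots w_s$ alternating between words in $A$ and words in $B$, invokes Lemma \ref{word lemma} (really, the normal form extracted from its proof, $w_i=c_i g^{p_i}d_i g^{p_i'}c_i'$ with $|p_i|,|p_i'|\geq 2$) to control each block, and then only analyses the block junctions, where the adjacent cores are automatically of different generators. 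Your route is more self-contained and avoids the slightly delicate step of re-opening the proof of Lemma \ref{word lemma} to extract the two-ended normal form of a block; the paper's route buys a shorter case analysis at the junctions since same-generator adjacency never occurs there. One small point worth tightening in a write-up: when $c_i=e$ with opposite signs, it is not the equality of lengths alone but the full cancellation $u_{i+1}=(u_i')^{-1}$ combined with the strict monotonicity of the lengths in the family that pins down $w_{i+1}=w_i^{-1}$; and you should note explicitly that when $c_i\neq e$ its reduced form has at least one end beginning with the generator opposite to the adjacent core (since the bracketing words adjacent to a $g$-core end or start in $h^{\pm 1}$, and symmetrically for $B$), which is what caps the damage to each core at one letter per side. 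Neither point is a gap; both are at the same level of detail as the paper's own proof.
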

\begin{proof}
Let $w=w_1w_2\cdots w_s$ where each $w_i$, for $i=1,\ldots, s$ is a reduced word on either the elements of $A$ or the elements of $B$, with the condition that if $w_i$ is a word on the elements of $A$ (or $B$) then $w_{i+1}$ is a word on the elements of $B$ (resp., $A$). Note that from Lemma \ref{word lemma} we have $w_i\neq e$, for $i=1,\ldots, s$.\\ 

If $s=1$ then $w=w_1$ and $w\neq e$. Let $s\geq 2$. After reducing the $w_i'$ as words of $g$ and $h$, from Lemma \ref{word lemma} we have either: 
\begin{eqnarray*}
w_i&=&c_ig^{p_i}d_ig^{p_i'}c_i' \text{ if  $w_i$ is a word on the elements of $A$, or}\\ w_i&=&c_ih^{p_i}d_ih^{p_i'}c_i' \text{ if  $w_i$ is a word on the elements of $B$,} 
\end{eqnarray*}
where $d_i$ is some reduced word on $g$ and $h$, and $c_i, c_i'$, or its inverses, are in $\{u_i, u_i', v_i, v_i'\}$ and also $|p_i|, |p_i'|\geq 2$, for $i=1,\ldots,s$. Then we get either:

\begin{eqnarray}
w_iw_{i+1}&=&c_ig^{p_i}d_ig^{p_i'}c_i'c_{i+1}h^{p_{i+1}}d_{i+1}h^{p_{i+1}'}c_{i+1}' \text{, or}\label{case 1}\\
w_iw_{i+1}&=&c_ih^{p_i}d_ih^{p_i'}c_i'c_{i+1}g^{p_{i+1}}d_{i+1}g^{p_{i+1}'}c_{i+1}'.\label{case 2}
\end{eqnarray}

  Let $l_i=c_i'c_{i+1}$. We also have, for $i=\{1,\ldots, s-1\}$,
 \begin{itemize}
 \item[a)] $l_i=e$ , when $c_i'$ and $c_{i+1}$ cancel each other; or\\
 \item[b)] $l_i=h^{\pm 1}\cdots h^{\pm 1}$, when part of $c_i'$ ($c_{i+1}$) cancels $c_{i+1}$ (resp., $c_i'$) in case \ref{case 1} (resp., case 2); or\\
 \item[c)] $l_i=g^{\pm 1}\cdots g^{\pm 1}$, when part of $c_i'$ ($c_{i+1}$) cancels $c_{i+1}$ (resp., $c_i'$) in case \ref{case 2} (resp., case 1) ; or\\
 \item[d)] $l_i=h^{\pm 1}\cdots g^{\pm 1}$, when neither $c_i'$ nor $c_{i+1}$ in case \ref{case 1} is canceled; or
 \item[e)] $l_i=g^{\pm 1}\cdots h^{\pm 1}$, when neither $c_i'$ nor $c_{i+1}$ in case \ref{case 2} is canceled. 
 \end{itemize}
Therefore, as $|p_i|, |p_i'|\geq 2$, for $i=1,\ldots,s$ the powers $p_i$ and $p_i'$ are never canceled. This implies that $L(w)>s$ and $w\neq e$.

\end{proof}
   
  \begin{prop}\label{orientable separating}
 The homomorphism $i_*:\pi_1(S_{n,k})\rightarrow \pi_1(H_2)$ is injective.
 \end{prop}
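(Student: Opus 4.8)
The plan is to reduce the statement to an application of Lemma \ref{word lemma 2}, exactly in the way Proposition \ref{orientable non-separating} reduced the non-separating case to Lemma \ref{word lemma}. Concretely, I would set
$$A=\{\,i_*(\alpha_1),\ldots,i_*(\alpha_n)\,\},\qquad B=\{\,i_*(\beta_1),\ldots,i_*(\beta_n)\,\}$$
(together with $i_*(\alpha_{n+1})$ adjoined to $A$ in the case of $S_{n,k}'$). The first task is to check that these two sets satisfy the hypotheses of Lemma \ref{word lemma 2} with $g=x$, $h=y$, $p=3$. That is: every $i_*(\alpha_i)$ can be written $v_i x^3 v_i'$ and every $i_*(\beta_i)$ can be written $u_i y^3 u_i'$, where each of $v_i,v_i',u_i,u_i'$ is a reduced word in which a power of $x$ alternates with a power of $y$ (so in fact, since the $\pm1$ exponents appearing are all $\pm1$, these are honestly alternating words in $x^{\pm1}$ and $y^{\pm1}$), with $v_i$ ending and $v_i'$ starting with $y^{\pm1}$, and $u_i$ ending and $u_i'$ starting with $x^{\pm1}$; and that the lengths $L(v_i)$, $L(v_i')$, $L(u_i)$, $L(u_i')$ are strictly increasing in $i$.

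The way I would verify this is to read it off the recursive formulas for $i_*(\alpha_i)$ and $i_*(\beta_i)$ listed just before Lemma \ref{word lemma 2}. The key structural point is that at each step the new band either prepends and appends a single letter $x^{\pm1}$ or $y^{\pm1}$ to a previous word, or (at a changeover, the steps indexed $j(2k-1)$) is built by conjugating with a block of length $2(k-1)$ of alternating $x^{\pm1},y^{\pm1}$ on each side; in every case the central $x^3$ (for $\alpha$) or $y^3$ (for $\beta$) survives undisturbed and the surrounding words stay alternating, start/end with the correct generator, and strictly grow in length. So this is a bookkeeping check, case by case over the blocks ($k=0$; $k\ge2$ with $q$ even or odd; and the $S_{n,k}'$ extension) — tedious but mechanical. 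One subtlety to watch is the interface between consecutive $\alpha$ and $\beta$ words at a changeover: one must confirm the conjugating letters are chosen so that no cancellation eats into the $x^3$ or $y^3$, which is precisely what the construction (curves not crossing on $\bdry H_2$) guarantees.

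Having established the hypotheses, the proof concludes just as before: given $\alpha\in\pi_1(S_{n,k})$ with $\alpha\neq e$, express $\alpha$ as a reduced word in the free generators $\alpha_1,\ldots,\alpha_n,\beta_1,\ldots,\beta_n$ of $\pi_1(S_{n,k})$ (the surface group being free since $S_{n,k}$ has boundary); grouping maximal subwords in the $\alpha$'s and in the $\beta$'s respectively, $i_*(\alpha)$ becomes a word of the form $w_1w_2\cdots w_s$ with $w_i$ alternately a nonempty reduced word on the elements of $A$ and on the elements of $B$. Lemma \ref{word lemma 2} then gives $i_*(\alpha)\neq e$, so $i_*$ is injective.

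The main obstacle is the verification that the listed words $i_*(\alpha_i),i_*(\beta_i)$ really do meet the alternation and strict-length-growth hypotheses of Lemma \ref{word lemma 2}, uniformly across all the cases of the construction (before the changeover, at the changeover, after it, for $k=0$, and for the extra band of $S_{n,k}'$); everything else is a direct transcription of the argument for Proposition \ref{orientable non-separating}. I would present the reduction cleanly and simply assert, as the authors do elsewhere, that the required properties of the words "can be checked" from the explicit formulas.

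\begin{proof}
Take
$$A=\{i_*(\alpha_1),i_*(\alpha_2),\ldots,i_*(\alpha_n)\}\quad\text{and}\quad B=\{i_*(\beta_1),i_*(\beta_2),\ldots,i_*(\beta_n)\},$$
with $i_*(\alpha_{n+1})$ adjoined to $A$ in the case of $S_{n,k}'$. Using the explicit words listed above, write $i_*(\alpha_i)=v_ix^3v_i'$ and $i_*(\beta_i)=u_iy^3u_i'$. From the recursive formulas one checks that the $v_ix^3v_i'$ and the $u_iy^3u_i'$ are reduced words on $x$ and $y$, that $v_i,v_i'$ (resp. $u_i,u_i'$) are words in which $x$, or its inverse, alternates with $y$, or its inverse, that $v_i$ ends and $v_i'$ starts with $y^{\pm1}$ (resp. $u_i$ ends and $u_i'$ starts with $x^{\pm1}$), and that $L(v_{i+1})>L(v_i)$, $L(v_{i+1}')>L(v_i')$ and $L(u_{i+1})>L(u_i)$, $L(u_{i+1}')>L(u_i')$ for all relevant $i$. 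Indeed, away from a changeover each new band conjugates a previous word by a single generator on each side, and at a changeover (step $j(2k-1)$) it conjugates by a block of alternating $x^{\pm1},y^{\pm1}$ of length $2(k-1)$ on each side; in every case the central $x^3$ or $y^3$ is undisturbed and the surrounding words remain alternating, keep the required first and last letters, and strictly increase in length. Hence we are under the hypotheses of Lemma \ref{word lemma 2}.

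Now let $w=i_*(\alpha)$ for some $\alpha\in\pi_1(S_{n,k})$ with $\alpha\neq e$. Since $S_{n,k}$ has nonempty boundary, $\pi_1(S_{n,k})$ is free on $\alpha_1,\ldots,\alpha_n,\beta_1,\ldots,\beta_n$, so $\alpha$ is a reduced word in these generators. Collecting maximal subwords in the $\alpha_i$ and in the $\beta_i$ respectively, $w$ is expressed as $w_1w_2\cdots w_s$, where each $w_i$ is a nonempty reduced word on the elements of $A$ or on the elements of $B$, and consecutive $w_i$ alternate between the two types. By Lemma \ref{word lemma 2}, $w\neq e$. Therefore $i_*$ is injective.
\end{proof}
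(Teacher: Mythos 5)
Your overall strategy (reduce to Lemma \ref{word lemma 2} in the same way that Proposition \ref{orientable non-separating} reduces to Lemma \ref{word lemma}) is the right one and matches the paper, but your choice of the sets $A$ and $B$ is wrong, and the verification you defer to a ``mechanical bookkeeping check'' would in fact fail. You partition the generators by the name of the band, putting all $i_*(\alpha_i)$ into $A$ (claimed to be of the form $v_ix^3v_i'$) and all $i_*(\beta_i)$ into $B$ (claimed to be of the form $u_iy^3u_i'$). But in the construction the band $\alpha_{i+1}$ travels through the tunnel formed by $\beta_i$, so the central cube alternates with the parity of the index: already $i_*(\alpha_2)=x\,i_*(\beta_1)\,x=xy^3x$, which has central syllable $y^3$ and cannot be written as $v\,x^3\,v'$ at all. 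So the hypotheses of Lemma \ref{word lemma 2} are simply not satisfied by your $A$ and $B$.

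The correct partition, which is the one the paper uses, groups the words by their central cube rather than by their band label:
$$A=\{i_*(\alpha_{2i-1})\}\cup\{i_*(\beta_{2i})\}\qquad\text{(central $x^3$)},\qquad
B=\{i_*(\beta_{2i-1})\}\cup\{i_*(\alpha_{2i})\}\qquad\text{(central $y^3$)}.$$
With this choice every element of $A$ is of the form $v x^3 v'$ and every element of $B$ of the form $u y^3 u'$ with the alternation, first/last-letter, and strict length-growth conditions of Lemma \ref{word lemma 2} (the indexing of the $v$'s and $u$'s interleaves the $\alpha$ and $\beta$ words so that lengths increase at each step). The remainder of your argument --- writing a nontrivial element of the free group $\pi_1(S_{n,k})$ as a product of maximal blocks alternating between $A$ and $B$ and invoking the lemma --- is unaffected by the repartition and goes through as you describe, as does the remark about $S_{n,k}'$.
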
 
 \begin{proof}
Take $$A=\{i_*(\alpha_{2i-1}) \mid 1\leq 2i-1\leq n, i\in \N\}\cup\{i_*(\beta_{2i}) \mid 2\leq 2i\leq n, i\in \N\}$$ and $$B=\{i_*(\beta_{2i-1}) \mid 1\leq2i-1\leq n, i\in \N\}\cup \{i_*(\alpha_{2i}) \mid 2\leq 2i\leq n, i\in \N\}.$$ Using the words from above we can write $i_*(\alpha_{2i-1})=v_{2i-1}x^3v_{2i-1}'$, $i_*(\beta_{2i})=v_{2i}x^3v_{2i}'$, and $i_*(\beta_{2i-1})=u_{2i-1}y^3u_{2i-1}'$, $i_*(\alpha_{2i})=u_{2i}y^3u_{2i}'$. We have that $v_{i}x^3v_{i}'$ and $u_iy^3u_{i}'$, for $i=1,\ldots,n$, are reduced words on $x$ and $y$, where the words $v_i$, $v_i'$, $u_i$ and $u_i'$ are words where $x$, or its inverse, alternate with $y$, or its inverse.  Furthermore, $v_i$ ($u_i$) ends with $y$, (resp., $x$),  and $v_i'$ ($u'_i$) starts with $y$ (resp., $x$). Also, $L(v_{i+1})>L(v_i)$, $L(v_{i+1}')>L(v_i')$, $L(u_{i+1})>L(u_i)$ and $L(u_{i+1}')>L(u_i')$, for $i=1,\ldots, n-1$. So, we are under the hypothesis of Lemma \ref{word lemma 2}.\\

Let $w=i_*(\alpha)$ for some $\alpha\in \pi_1(S_{n,k})$ with $\alpha\neq e$. We can write $w$ as a reduced word on the elements of $A\cup B$: $w=w_1w_2\cdots w_s$, for some $s\in\N$, where the $w_i$'s alternate between being words in either the elements of $A$ or the elements of $B$. Then by Lemma \ref{word lemma 2}, $w\neq e$.  Therefore $i_*$ is injective.
 \end{proof} 
  
 The same argument, with the extra word $i_*(\alpha_{n+1})$ included, proves that  $i_*:\pi_1(S_{n,k}')\rightarrow \pi_1(H_2)$ is injective. A similar argument also works for the case when $k$ is even.\\
 
The above construction and results, and in particular Proposition \ref{orientable separating}, prove Theorem \ref{sep orientable in H2}.

\section{Incompressible surfaces in boundary reducible 3-manifolds}
\label{b-reducible manifold}

Consider a closed surface $F_-$, not necessarily connected. Take $F_-\times [0,1]$ and add a finite number of  1-handles to $F_-\times \{1\}$. A 3-manifold obtained in this way is said to be a compression body. Dually a connected compression body can be obtained from a connected surface $F_+$, by taking $F_+\times [0,1]$ and adding a finite number of 2-handles to $F_+\times \{0\}$.\\

Let $M$ be a compact 3-manifold with a compressible boundary component $B$ of genus greater or equal than two.\\

Take the compressible boundary component $B$  of $M$ and consider $B\times [0,1]\subset M$ where we view $B$ as $B\times \{1\}$. As $B$ is compressible there is a compressing disk for $B\times \{0\}$ in $M$. We say that a (disjoint) collection of such compressing disks $\{D_1, D_2,\ldots, D_n\}$, for some $n\in\mathbb{N}$, is maximal if any other compressing disk for $B\times \{0\}$ in $M$ either intersects or is isotopic to some disk in the collection\footnote{A maximal collection of compressing disks always exists for $B$ because $B$ is compressible in $M$ (and has finite genus).}.  Consider the compression body obtained from $B\times [0,1]$ by adding the 2-handles defined by the compressing disks  in a maximal collection of compressing disks for $B\times\{0\}$ in $M$. We denote this compression body by $C$, $\partial_+C=B$ and $\partial_-C=\partial C -B$. Let $N$ be defined by $M=C\cup_{\partial_-C}N$. We have that $\partial_-C$ is incompressible in $N$, otherwise some isotopy class of compressing disks for $B\times \{0\}$ would be missing from the construction of $C$. In addition $\partial_-C$ is incompressible in $C$, because $C$ is obtained from $\partial_-C\times [0,1]$ by adding a collection of 1-handles to $\partial_-C\times \{1\}$. This means that there is no compressing disk for $\partial_-C$ in $C$. We refer to such a compression body as a maximal compression body of $B$ in $M$.\\

Consider a connected compression body $C$ where $\partial_+C$ has genus greater than or equal to two and such that $\partial _+C$ is compressible in $C$. From now on $C$ will denote a compression body with these properties.\\

Let $T_s$ be a punctured torus with $s$ punctures and take $P_s=T_s\times [0,1]$ and denote by $Q$ a solid torus.\\

\begin{figure}[htb]
\centering
\includegraphics[width=\textwidth]{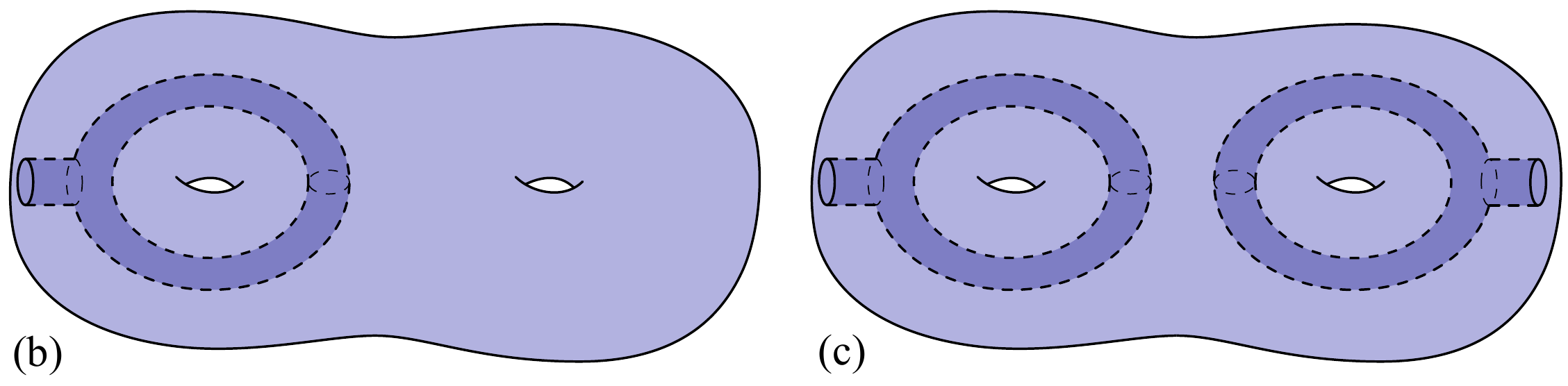}
\caption{Examples of the subspace $H$ in cases (b) ($H=P_1\cup_D Q$) and (c) ($H=P_1\cup_D P_1$) of lemma \ref{compression body decomposition}.}
\label{H2_variants.pdf}
\end{figure}

\begin{lemma}\label{compression body decomposition}
There are subspaces $H,\,C^*\subset C$ giving the decomposition $C=C^*\cup_J H$, where $C^*\cap H=J$ is a collection of annuli or disks properly embedded in $C$, such that
\begin{itemize}
\item[(a)] $H=Q\cup_DQ=H_2$ or,\\
\item[(b)] $H=P_{s_1}\cup_D Q$ or,\\
\item[(c)] $H=P_{s_1}\cup_D P_{s_2}$,
\end{itemize} 
for some $s_i\in\{0,1\}$, $i=1,2$, $D$ is a separating disk in $H$ and $C$, and $J$ is one disk or one annulus in case (a) and (b) and a collection of two disks or annuli in case (c).
\end{lemma}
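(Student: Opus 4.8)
The plan is to reduce to the standard structure theory of compression bodies and then run a short case analysis on how the genus of $\partial_+ C$ is produced. First I would record the structural fact that a connected compression body $C$ whose negative boundary $\partial_- C$ has components $F_1,\dots,F_m$ (closed surfaces, each of genus $g_i\ge 1$; $m=0$ allowed) is homeomorphic to a boundary connected sum
$$C\ \cong\ (F_1\times I)\ \natural\ \cdots\ \natural\ (F_m\times I)\ \natural\ H_\ell,$$
where $H_\ell$ is a genus-$\ell$ handlebody ($H_\ell=B^3$ if $\ell=0$, and $C=H_\ell$ if $m=0$). This follows by putting the defining $1$-handles in standard position: a handle joining two distinct product pieces realizes a boundary connected sum of them, while a handle with both feet on one piece realizes a boundary connected sum with a solid torus $Q$, and $Q\,\natural\cdots\natural\,Q=H_\ell$. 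An Euler-characteristic count then gives $\operatorname{genus}(\partial_+C)=g_1+\cdots+g_m+\ell$, and the hypothesis that $\partial_+C$ is compressible excludes exactly the trivial product $C\cong F_1\times I$; so under our standing hypotheses $g_1+\cdots+g_m+\ell\ge 2$ with at least one nontrivial summand. Recall also that $P_1=T_1\times[0,1]$ is a genus-two handlebody.

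Given this, I would split into cases and in each exhibit $H$ as a boundary connected sum of two ``elementary'' pieces --- each a solid torus $Q$, or $P_0=T^2\times[0,1]$, or $P_1\cong H_2$ --- taking $D$ to be the connect-sum disk sitting inside $H$, $C^*$ the complementary piece(s), and $J=C^*\cap H$. If $\ell\ge 2$, write $H_\ell=H_2\,\natural\,H_{\ell-2}$ and let $H$ be the $H_2=Q\cup_D Q$ summand, so $J$ is the connect-sum disk: this is case (a). If some $F_i$ has genus $\ge 2$, write $F_i=T_1\cup_{S^1}F_i'$ with $T_1$ once-punctured and $F_i'$ of lower genus with one boundary circle, so that $F_i\times I=(T_1\times I)\cup_{S^1\times I}(F_i'\times I)$; take $H=T_1\times I\cong H_2$, so $J=S^1\times I$ is a properly embedded annulus: case (a) again. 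Otherwise every $F_i$ is a torus and $\ell\le 1$, with $m+\ell\ge 2$: if $\ell=1$ take $H=(T^2\times I)\,\natural\,Q=P_0\cup_D Q$, which is case (b) with $s_1=0$; if $\ell=0$ (so $m\ge 2$) take $H=(T^2\times I)\,\natural\,(T^2\times I)=P_0\cup_D P_0$, which is case (c) with $s_1=s_2=0$. In each case $D$ separates $H$ by construction, and by arranging the disk or annulus along which $C^*$ is glued to $H$ to lie on one side of $D$ --- in case (c) letting $C^*$ have one component on each side of $D$, so that $J$ becomes the two claimed disks --- one gets that $D$ separates $C$ as well; and $J$ is properly embedded, being either a connect-sum disk or a product annulus over a nonseparating curve in some $F_i$. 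The configurations with an $s_i=1$ (for example $H=P_1\cup_D Q$ or $H=P_1\cup_D P_1$, as pictured in Figure \ref{H2_variants.pdf}) occur as alternative valid choices, e.g.\ inside a handlebody $C=H_g$; only the existence of one of (a)--(c) is required.

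The hard part here is the bookkeeping for $J$ rather than any single idea. One must choose the splittings $F_i=T_1\cup F_i'$ and the connect-sum disks carefully so that $C^*\cap H$ is \emph{exactly} the stated one or two disks or annuli, properly embedded and with nothing extra, and so that the disk $D$ genuinely separates $C$; this is where the standard position of the compression body's $1$-handles gets used. The remaining points --- the Euler-characteristic formula for $\operatorname{genus}(\partial_+C)$, and the observation that ``$\partial_+C$ compressible'' rules out only $C\cong F_1\times I$ --- are routine. I expect the case in which $\partial_-C$ has two torus components (case (c), where $J$ has two components and $C^*$ may be disconnected) to be the most delicate to set up cleanly.
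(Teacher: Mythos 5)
Your overall strategy (standard position of the $1$-handles, i.e.\ the classification $C\cong(F_1\times I)\,\natural\cdots\natural\,(F_m\times I)\,\natural\,H_\ell$, followed by a case analysis on where the genus of $\partial_+C$ comes from) is essentially the paper's argument, phrased via boundary connected sums rather than via the graph $G$ of product pieces and $1$-handles. Cases (a) with $\ell\ge 2$, (b) with $s_1=0$, and (c) with $s_1=s_2=0$ are handled correctly.

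The gap is in your second case, where some $F_i$ has genus at least two and you take $H=T_1\times I$, declaring it to be case (a) because $T_1\times I\cong H_2$. The homeomorphism is true but useless here: the required disk $D$ realizing $H=Q\cup_D Q$ would be $\alpha\times I$ for a separating essential arc $\alpha$ in $T_1$, and its boundary runs over the splitting annulus $J=\partial T_1\times I$ (interior to $C$) and over $F_i\times\{0\}\subset\partial_-C$, so $D$ is not a properly embedded separating disk in $C$ as the lemma demands. Since $\partial_-C$ is incompressible, any essential disk of $C$ must have boundary on $\partial_+C$; but $H=T_1\times I\subset F_i\times I$ meets $\partial_+C$ only in the once-punctured torus $T_1\times\{1\}$ and contains no compressing disk of $\partial_+C$ at all, so this $H$ cannot support the band construction that the lemma feeds into. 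Consequently your claim that the configurations with $s_i=1$ are merely ``alternative valid choices'' is wrong: they are forced. For example, if $\partial_-C$ is a single genus-two surface and there is one loop $1$-handle, the only available decomposition is $H=P_1\cup_D Q$ with $D$ the connect-sum disk on $\partial_+C$ and $J$ a product annulus over a curve cutting a punctured torus off $F_1$; similarly $P_0\cup_D P_1$ and $P_1\cup_D P_1$ are needed when $G$ is a tree and the positive-genus components of $\partial_-C$ have genus $\ge 2$. Your case analysis as written fails to produce a valid $(H,D,J)$ exactly in these situations, which is where the $P_1$ pieces and the annular components of $J$ in the statement come from.
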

\begin{proof}
Let us take $C$ obtained from a closed surface $F\times [0,1]$ by adding a collection of 1-handles to $F\times \{1\}$. Consider a graph $G$ where the vertices correspond to the components of $F\times [0,1]$ and the edges correspond to the 1-handles. Here we assume that $C$ is connected and so $G$ is a connected graph.\\      

Assume first that $G$ has two or more cycles. We can slide the ends of two 1-handles along $F\times \{1\}$ and other 1-handles until they are both on the same component of $F\times [0,1]$. Then by cutting along a disk $J$ we get $C=C^*\cup_J H$ where $H$ is a handlebody of genus two. Note that in this case $H$ contains an essential compressing disk that is separating, and an essential compressing disk that is non-separating, in both $H$ and $C$.\\

Now assume that $G$ has precisely one cycle. If all components of $F$ are spheres then the genus of $\partial_+ C$ is one. This is a contradiction to the assumption that $\partial _+ C$ has genus greater than or equal to two. Therefore at least one component of $F$ has genus greater than or equal to one. 
\begin{itemize}
\item If there is a component of $F\times [0,1]$ with genus one then we slide one 1-handle to be a loop on this vertex of $G$ and if necessary, slide the other 1-handles around (altering the corresponding graph $G$) to leave only one 1-handle connecting this component to the other components. So, now there is a disk $J$ in $C$ such that $C=C^*\cup_J H$,where $H=P_0\cup_D Q$.
\item If, on the other hand, there is no component of $F$ with genus one then take a component  $F_1$ with positive genus. We slide one 1-handle to be a loop on this vertex of $G$ and again leave only one 1-handle connecting this component to the other components. We take a simple closed curve $a$ in $F_1$ separating a punctured torus from the rest of $F_1$ and we slide both ends of the 1-handle to this punctured torus. Taking $F\times [0,1]$ we get a separating annulus $A=a\times [0,1]$. By cutting along this annulus we get $C=C^*\cup_J H$ where  $H=P_1\cup_D Q$ and $J=A$.
\end{itemize}
Note that also in this case $H$ contains an essential compressing disk that is separating, and an essential compressing disk that is non-separating, in both $H$ and $C$.\\

Finally, assume that $G$ is a tree. If the components of $F$ are all spheres apart possibly from at most one torus component then the genus of $\partial_+ C$ is less or equal than one. As in the previous case, this gives a contradiction to the assumption that $\partial _+ C$ has genus greater than or equal to two.  If the components of $F$  are all spheres apart from a single surface of genus $g\geq 2$ then the genus of $\partial_+ C$ is also $g\geq 2$. Then $\partial_+C$ is incompressible in $C$, as the existence of a compressing disk would mean that no genus $g$ component of $F$ exists. So, $F$ must have more than one component of genus greater than or equal to one.  Take two of these components, say $F_1$ and $F_2$, and slide 1-handles if necessary until each $F_i\times [0,1]$ is adjacent to two edges in $G$ and one edge connects $F_1\times [0,1]$ to $F_2\times [0,1]$. We have different cases depending on if $F_1$ and $F_2$ are tori or are surfaces of higher genus: 
\begin{itemize}
\item If $F_1$ and $F_2$ are both tori then cut the two 1-handles attached to each of them that does not connect them to each other. We denote by $H$ the component with the 1-handle attaching $F_1$ and $F_2$. So, we get $C=C^*\cup_J H$ where $H=P_{0}\cup_D P_{0}$ and $J$ is a collection of two disks. 
\item Without loss of generality, we assume $F_1$ is a torus and $F_2$ has genus greater than or equal to two. We cut the 1-handle attached to $F_1$ but not to $F_2$ along a disk. Consider a simple closed curve $a$ in $F_2$ cutting a punctured torus from $F_2$. We slide the 1-handle connecting $F_1$ and $F_2$ to this punctured torus and if necessary, slide other 1-handles until there are no other 1-handles attached to the punctured torus. Taking $F \times [0,1]$ we get an annulus $A=a\times [0,1]$. By cutting along this annulus and the disk we get $C=C^*\cup_J H$ where $H=P_0\cup_D P_1$ and $J$ is a collection of a disk and an annulus. 
\item Finally, assume that both $F_1$ and $F_2$ have genus greater than or equal to two. Take simple closed curves $a_i$ in $F_i$ cutting a punctured torus from $F_i$, $i=1,2$, and we slide the 1-handle connecting $F_1$ and $F_2$ to these punctured tori and all other 1-handles off of the punctured tori. By taking $F_\times [0,1]$ we get the annuli $A_i=a_i\times [0,1]$, for $i=1,2$. We cut along this annuli and we get $C=C^*\cup_J H$ where $H=P_1\cup_D P_1$ and $J=A_1\cup A_2$.
\end{itemize}
Note that in this case $H$ contains an essential compressing disk that is separating in both $H$ and $C$ (and no essential non-separating, compressing disk in either $H$ or $C$).\\
\end{proof} 

Let $S$ be a surface embedded in $H_2=Q\cup_D Q$ as in the previous sections. Let $E$ be the starting disk of the construction of the embedding of $S$ in $H_2$. Then, by construction, $S$ is in a regular neighborhood   $E\times [0,1]\cup \partial H_2\times [0,1]$ of $E\cup \partial H_2$. We know that the embedding of $S$ is constructed from a finite collection of bands parallel to $\partial H_2$ attached to $E$. So, we can take a collection of points $\{x_1, x_2\}\subset \partial H_2-\partial E$ such that $S\subset E\times[0,1] \cup \partial H_2\times [0,1]-x_i\times [0,1]$, for $i=1,2$. Therefore, in the same way we constructed the $\pi_1$-injective embedding of $S$ in $H_2$ we construct a $\pi_1$-injective embedding of $S$ in $H=P_{s_1}\cup_D Q$ and also in $H=P_{s_1}\cup_D P_{s_2}$, for $s_i\in\{0,1\}$, $i=1,2$. We will prove the $\pi_1$-injectivity in Lemma \ref{pi1 injective compression body}. Note that if $H$ is as in Lemma \ref{compression body decomposition} (a) or (b) we can construct  the  embedding of $S$ in $H$ following any of the previous sections as $H$ has both an essential separating disk and an non-separating disk; if $H$ is as in Lemma \ref{compression body decomposition} (c) we can only construct $S$ following section \ref{separating case} as $H$ has an essential separating disk but no non-separating disks.\\

Let $i:S\rightarrow H$ be the embedding discussed above and $l:H\rightarrow H_2$ the inclusion map. By construction we have the commutativity of the following diagram:
$$\xymatrix{&H_2\\
S\ar[ur]^{i}\ar[r]_{j}&H\ar[u]_{l}}\\$$

Let $C$ be a compression body as referred above. As in Lemma \ref{compression body decomposition} we have the decomposition $C=C^*\cup_J H$, where $C^*\cap H=J$ and $J$ is a collection of disks and annuli where at most two components are annuli. Therefore, $\pi_1(C)=\pi_1(C^*)*_{\pi_1(J)}\pi_1(H)$ is a free product with amalgamation along the group $\pi_1(J)$, which is the trivial group, $\Z$ or $\Z*\Z$.\\

Consider the inclusion $h:H\rightarrow C^*\cup_J H$. Then we have a proper embedding of $S$ in $C$, induced by $j$, given by $g=h\circ j$ where the boundary of $S$ lies in $\partial_+C$.\\ 

\begin{prop}\label{pi1 injective compression body}
The induced homomorphism $g_*:\pi_1(S)\rightarrow \pi_1(C)$ is injective.
\end{prop}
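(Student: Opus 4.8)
The plan is to factor the map $g_* \colon \pi_1(S) \to \pi_1(C)$ through the maps already analyzed and to exploit the structure of $\pi_1(C)$ as a free product with amalgamation. Concretely, I would first recall that $g = h \circ j$, so $g_* = h_* \circ j_*$, and that by the commutativity of the displayed diagram $l_* \circ j_* = i_*$, where $i_*$ is injective by Propositions \ref{orientable non-separating}, \ref{non-orientable non-separating} and \ref{orientable separating} (using the appropriate one depending on whether $S$ is orientable/non-orientable and whether we chose the separating or non-separating construction, and recalling that in case (c) of Lemma \ref{compression body decomposition} only the separating construction is available). Since $l_* \circ j_* = i_*$ is injective, $j_* \colon \pi_1(S) \to \pi_1(H)$ is injective. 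So the whole problem reduces to showing that $h_* \colon \pi_1(H) \to \pi_1(C) = \pi_1(C^*) *_{\pi_1(J)} \pi_1(H)$ is injective, or at least that its restriction to $j_*(\pi_1(S))$ is injective.

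The key step is then a standard fact about amalgamated free products: if $G = G_1 *_{A} G_2$ is the amalgam of $G_1$ and $G_2$ over a common subgroup $A$, then each factor $G_i$ embeds into $G$. Here $\pi_1(J)$ is the trivial group, $\Z$, or $\Z * \Z$ — but in all cases it genuinely sits inside $\pi_1(H)$ as $\pi_1$ of the embedded disks/annuli $J \subset H$ (the annuli are incompressible annuli in the handlebody-like piece $H$, and the disks contribute trivial fundamental group), and likewise inside $\pi_1(C^*)$ by the corresponding embedding $J \subset C^*$. Applying the embedding theorem for amalgamated free products (or, more carefully, the normal form theorem for elements of such products), the inclusion-induced map $\pi_1(H) \to \pi_1(C)$ is injective. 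Composing with the injective map $j_*$, we conclude that $g_* = h_* \circ j_*$ is injective, as desired.

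I would phrase the argument so as to sidestep any subtlety about whether the inclusions of $J$ are $\pi_1$-injective on the nose: the disk components contribute nothing, and for the annulus components one uses that $J$ was constructed (in Lemma \ref{compression body decomposition}) as $a \times [0,1]$ for $a$ an essential curve on a surface $F_i$, so its core curve is nontrivial and primitive in both adjacent pieces; this is exactly what is needed for the amalgamation to be along an honestly embedded subgroup. With that in hand, the embedding theorem for $G_1 *_A G_2$ applies verbatim.

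The main obstacle I anticipate is precisely this bookkeeping at the amalgamating subgroup: one must be sure that $\pi_1(J) \hookrightarrow \pi_1(H)$ and $\pi_1(J) \hookrightarrow \pi_1(C^*)$ are injective so that $\pi_1(C)$ really is a free product \emph{amalgamated} along $\pi_1(J)$ (rather than a quotient thereof), since the embedding-of-factors theorem requires this. Everything else — the factorization $g_* = h_* \circ j_*$, the injectivity of $j_*$ from that of $i_*$, and the invocation of the amalgamated-product embedding theorem — is formal once that point is settled. I would therefore devote most of the written proof to verifying the incompressibility of the annular components of $J$ in $H$ and in $C^*$, and then invoke the standard theorem to finish.
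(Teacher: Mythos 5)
Your argument is correct, but it is genuinely different from the one in the paper. You reduce the statement to the injectivity of $h_*\colon\pi_1(H)\to\pi_1(C)$ and invoke the embedding theorem for amalgamated free products (really, for the graph of groups with vertex groups $\pi_1(H)$ and the components of $\pi_1(C^*)$ and edge groups the components of $\pi_1(J)$, since $J$ may be disconnected and $C^*$ may have two components). This forces you to verify that each annular component $a\times[0,1]$ of $J$ is $\pi_1$-injective in both adjacent pieces, and you correctly identify this as the crux: it holds because Lemma \ref{compression body decomposition} only produces annular components of $J$ when the relevant component of $F$ has genus at least two, so $a$ is essential in the punctured torus on one side and in the genus $\ge 1$ complementary subsurface on the other (primitivity is not needed, only that the core has infinite order in each factor); one should also note that $\pi_1(C^*)$ is a free product of the surface groups of the components of $F$ minus the punctured tori with a free group, so injectivity into the subsurface group propagates to $\pi_1(C^*)$. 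The paper instead sidesteps all of this: it fills in the annular components of $J$ with $2$-handles to turn the decomposition into a genuine free product $\pi_1(\overline{C})=\pi_1(\overline{C^*})*\pi_1(\overline{H})$, and builds a homomorphism $f_*\colon\pi_1(C)\to\pi_1(H_2)$ (killing the $\overline{C^*}$ factor) with $f_*\circ g_*=i_*$; injectivity of $i_*$ then gives injectivity of $g_*$ with no normal form theorem and no incompressibility check on $J$. Your route proves the stronger statement that $H$ is $\pi_1$-injective in $C$, at the cost of the extra verification; the paper's route is shorter and needs less about $J$, but only yields injectivity of $g_*$ on the subgroup $j_*(\pi_1(S))$. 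Both are valid proofs of the proposition.
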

\begin{proof}
We know that $i=l\circ j$. Therefore, for the induced homomorphisms we have commutativity of the following diagram:
$$\xymatrix{&\pi_1(H_2)\\
\pi_1(S)\ar[ur]^{i_*}\ar[r]_{j_*}&\pi_1(H)\ar[u]_{l_*}}\\$$

Let $\overline{C}$ be obtained from $C$ by adding a 2-handle along each core of the annulus components of $J$. Then $\overline{C}=\overline{C^*}\cup_{\overline{J}}\overline{H}$, where $\overline{C^*}$ and $\overline{H}$ are obtained from $C^*$ and $H$ by the addition of the 2-handles along annulus components of $J$, and $\overline{J}$ is one or two separating disks in $\overline{C}$. Consider $k:C\rightarrow \overline{C}$  the inclusion map. The induced homomorphism $k_*:\pi_1(C)\rightarrow \pi_1(\overline{C})$ is onto. Let $\overline{l}:\overline{H}\rightarrow H_2$ also be the inclusion map. So the inclusion map $l$ of $H$ in $H_2$ can be written as the composition of $\overline{l}$ and the inclusion of $H$ in $\overline{H}$. Then, as $\pi_1(\overline{C})=\pi_1(\overline{C^*})*\pi_1(\overline{H})$ the homomorphism $\overline{f}_*:\pi_1(\overline{C})\rightarrow \pi_1(H_2)$ defined by 

\begin{equation*}
\overline{f}_*(a)=\left\{
\begin{array}{ll}
\overline{l}_*(a)&\text{ if } a\in \pi_1(\overline{H})\\
e &\text{ if } a\in \pi_1(\overline{C^*})
\end{array}\right.
\end{equation*}
where $e$ is the identity element of $\pi_1(H_2)$, is onto. Let $f_*=\overline{f}_*\circ k_*$. Consider also the inclusion maps $h:H\rightarrow C$ and $\overline{h}=k\circ h$. Therefore, the following diagram is commutative: 
$$\xymatrix{\pi_1(H_2)&\pi_1(\overline{C})\ar[l]_{\overline{f}_*}\\
\pi_1(H)\ar[u]^{l_*}\ar[r]_{h_*}\ar[ur]^{\overline{h}_*}&\pi_1(C)\ar[u]_{k_*} }$$

From the commutativity of these diagrams we have the commutativity of the following diagram:
$$\xymatrix{&\pi_1(H_2)&\\
\pi_1(S)\ar[ru]^{i_*}\ar[r]_{j*}\ar@/_1.5pc/[rr]_{g_*}&\pi_1(H)\ar[u]_{l_*}\ar[r]_{h_*}&\pi_1(C)\ar[lu]_{f_*}}$$

As $f_*$ is surjective and $i_*$ is injective we have that $g_*=h_*\circ j_*$ is injective. This gives us the conclusion of the proposition.
\end{proof}

Consider the inclusion map $q:C\rightarrow M$ and also the embedding $p: S\rightarrow M$, where $p=q\circ g$.

\begin{lemma}\label{comp body injects into M}
The induced homomorphism $q_*:\pi_1(C)\rightarrow\pi_1(M)$ is injective.
\end{lemma}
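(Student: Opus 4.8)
The plan is to show that the maximal compression body $C$ is $\pi_1$-injective in $M$ via a standard incompressibility argument: since $C$ was built from $B \times [0,1]$ by attaching the $2$-handles of a maximal collection of compressing disks for $B\times\{0\}$, the surface $\partial_-C$ bounding $C$ on the side of $N$ is incompressible in $N$ (this is exactly the maximality, already observed in the construction of $C$ in the excerpt). First I would recall $M = C \cup_{\partial_-C} N$ and note that $\partial_-C$ is incompressible in both $C$ and $N$. If some component of $\partial_-C$ were also $\partial$-parallel or compressible on one side we would have a contradiction with maximality, so we genuinely have a two-sided incompressible surface splitting $M$.

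Next I would appeal to the fact that an embedded two-sided incompressible surface gives a $\pi_1$-injective splitting of the ambient manifold's fundamental group. Concretely, by van Kampen $\pi_1(M) = \pi_1(C) *_{\pi_1(\partial_-C)} \pi_1(N)$ (with a graph-of-groups version if $\partial_-C$ is disconnected), and because $\partial_-C \hookrightarrow C$ and $\partial_-C \hookrightarrow N$ are both $\pi_1$-injective, the vertex groups $\pi_1(C)$ and $\pi_1(N)$ inject into the amalgam. Hence $q_*:\pi_1(C)\to\pi_1(M)$ is injective. The cleanest way to invoke this is to cite the standard fact (e.g. via the loop theorem / Dehn's lemma, or Hempel's book) that if $F$ is a two-sided incompressible surface properly embedded in $M$, then $\pi_1$ of each complementary piece injects into $\pi_1(M)$; alternatively one can give the direct combinatorial-group-theory argument that in an amalgamated product over a subgroup that embeds in both factors, each factor embeds.

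The main obstacle I anticipate is being careful about the hypotheses of the "incompressible surface $\Rightarrow$ injective splitting" principle: one needs $\partial_-C$ to be incompressible \emph{and} to have no sphere or disk components that would spoil van Kampen (a sphere component would split off an $S^2 \times I$ or ball and is harmless, but should be addressed), and one needs two-sidedness, which holds here since $\partial_-C$ is a boundary component of the compression body $C$. I would also want to confirm that $\partial_-C$ is not just incompressible but that the inclusion-induced maps on $\pi_1$ of each component are injective, which for an incompressible surface in an irreducible-enough setting is automatic, but in general follows from the loop theorem directly: any essential loop in $\partial_-C$ that died in $\pi_1(C)$ or $\pi_1(N)$ would, by the loop theorem, produce an embedded compressing disk, contradicting incompressibility. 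Once that is pinned down, the conclusion that $q_*$ is injective is immediate from the structure of the amalgamated free product (or graph of groups).
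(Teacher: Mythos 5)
Your proposal is correct and follows essentially the same route as the paper: the authors also decompose $M$ as $C$ glued to the components of $N$ along the incompressible components of $\partial_-C$, and then apply van Kampen's theorem (phrased as an iterated sequence of amalgamated products and HNN extensions rather than a graph of groups, to handle a component of $N$ meeting $C$ along several surface components) together with the injectivity of the edge groups to conclude that $\pi_1(C)$ injects. Your additional remarks on the loop theorem and on ruling out problematic components are reasonable care but do not change the argument.
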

\begin{proof}
Assume $M$ is connected and let $N$ be defined by $M=C\cup_S N$. $C$ is connected but $N$ may not be, and we let $N_1, N_2, \ldots, N_n$ be the connected components of $N$. The components $S_1, S_2, \ldots, S_m$ of $S$ are incompressible boundary components of both $C$ and the $N_i$. We build $M$ from $C$ and the $N_i$ by gluing along the $S_j$'s. At each gluing, we apply either van Kampen's theorem (if this is the first time we join a particular $N_i$ to $C$) or perform an HNN extension (if this is not the first time we join a particular $N_i$ to $C$). In both cases the fundamental group of the component containing $C$ injects into the fundamental group of the resulting manifold after the gluing along $S_j$ (this uses the injectivity of $\pi_1(S_j)$ into both $\pi_1(C)$ and $\pi_1(N_i)$). Thus by induction, $\pi_1(C)$ injects into $\pi_1(M)$.
\end{proof}

\begin{prop}\label{separating}
The induced homomorphism $p_*:\pi_1(S)\rightarrow \pi_1(M)$ is injective.     
\end{prop}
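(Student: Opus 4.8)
The plan is to obtain $p_*$ as a composition of two injections already established in the excerpt. Recall that $p = q \circ g$, where $g : S \to C$ is the proper embedding of $S$ into the maximal compression body $C$ of $B$ in $M$, and $q : C \to M$ is the inclusion. On the level of fundamental groups this gives $p_* = q_* \circ g_*$. By Proposition \ref{pi1 injective compression body} the homomorphism $g_* : \pi_1(S) \to \pi_1(C)$ is injective, and by Lemma \ref{comp body injects into M} the homomorphism $q_* : \pi_1(C) \to \pi_1(M)$ is injective. A composition of injective homomorphisms is injective, so $p_* = q_* \circ g_*$ is injective, which is the claim.

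The only thing to be careful about is that the hypotheses of the two cited results are genuinely in force. First I would note that $C$ is, by construction (the paragraph defining the maximal compression body of $B$ in $M$), a connected compression body with $\partial_+ C = B$ of genus $\geq 2$, and $\partial_+ C$ is compressible in $C$ since $B$ is compressible in $M$ and a maximal collection of compressing disks is nonempty; hence $C$ is exactly the kind of compression body to which Lemma \ref{compression body decomposition} and Proposition \ref{pi1 injective compression body} apply. Second, $\partial_- C$ is incompressible in both $C$ and in $N$ (shown when the maximal compression body was introduced), so that the van Kampen / HNN argument of Lemma \ref{comp body injects into M} applies along the components of $S = \partial_- C$... — more precisely, the components of $S$ pushed off into $\partial_- C$; one should remark that $S$ is properly embedded in $C$ with $\partial S \subset \partial_+ C$, and $C \cup_{\partial_- C} N = M$ is the decomposition used there.

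There is essentially no obstacle here; the proposition is a bookkeeping corollary assembling Proposition \ref{pi1 injective compression body} and Lemma \ref{comp body injects into M}. If anything, the only mild subtlety is matching notation: in Lemma \ref{comp body injects into M} the gluing surface is called $S$ and the manifold is split as $M = C \cup_S N$, while here $S$ denotes the embedded surface of interest; I would phrase the proof so that it is clear the incompressible surface along which $M$ is cut is $\partial_- C$, and that the surface $S$ of the present proposition lies in the interior of $C$ (in the copy of $H$ inside the decomposition $C = C^* \cup_J H$), so the two roles do not conflict. With those remarks in place, the proof is the one-line composition above.

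\begin{proof}
Recall that $p = q \circ g$, so the induced homomorphism satisfies $p_* = q_* \circ g_*$. By Proposition \ref{pi1 injective compression body}, $g_* : \pi_1(S) \to \pi_1(C)$ is injective, where $C$ is the maximal compression body of $B$ in $M$; this applies since $C$ is a connected compression body with $\partial_+ C = B$ of genus greater than or equal to two and $\partial_+ C$ compressible in $C$. By Lemma \ref{comp body injects into M}, $q_* : \pi_1(C) \to \pi_1(M)$ is injective, using that $\partial_- C$ is incompressible in both $C$ and $N$, where $M = C \cup_{\partial_- C} N$. Since a composition of injective homomorphisms is injective, $p_* = q_* \circ g_*$ is injective.
\end{proof}
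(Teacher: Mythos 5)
Your proof is correct and is essentially identical to the paper's: both simply observe that $p_* = q_* \circ g_*$ and compose the injectivity statements of Proposition \ref{pi1 injective compression body} and Lemma \ref{comp body injects into M}. Your extra remarks verifying the hypotheses and disambiguating the two uses of the letter $S$ are sound but not needed beyond what the paper records.
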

\begin{proof}
We have from Proposition \ref{pi1 injective compression body} that $g_*$ is injective and from Lemma \ref{comp body injects into M} that $q_*$ is injective. These imply that $p_*$ is injective.
\end{proof}

\begin{proof}[Proof of Theorem \ref{all surfaces in M}]
Let $M$ be a 3-manifold and $B$ a compressible boundary component of genus greater than or equal to two, of type \emph{ns}. Consider a maximal compression body, $C$, in $M$ obtained from $B$. As in the proof of Lemma \ref{compression body decomposition}, we consider a graph $G$ where the vertices correspond to the components of $\partial_- C$, and the edges correspond to the 1-handles. Each compressing disk of $B$ in $M$ can be seen as the co-core of a 1-handle and (by possibly changing $G$) corresponds to a point in the corresponding edge of $G$.\\ 

Assume $B$ is of type \emph{ns}. So, it has a compressing disk $D$ in $M$ where $\partial D$ is non-separating in $B$, and so $D$ is necessarily non-separating in $M$. This implies that the point in $G$ (possibly changed by considering a different 1-handle decomposition of $C$) corresponding to $D$ is non-separating. Therefore, $G$ has at least one cycle. Then, from the proof of Lemma \ref{compression body decomposition}, $C$ has a decomposition as in Lemma \ref{compression body decomposition} (a) or (b). In both cases, $H$ has an essential disk that is separating, and an essential disk that is non-separating, in both $H$ and $M$. In fact, from Lemma \ref{compression body decomposition} (a) or (b) the disk $D$ (as denoted there) is also separating in $M$ and a non-separating disk of $Q$ (as denoted in the Lemma) is also non-separating in $M$. Let $D_1$ be a essential non-separating disk in $H$ and $M$.  Then we can construct embeddings of surfaces in $H$, and hence in $C$ and $M$, following section \ref{non-separating case} and the discussion in this section. Note that we start this construction in $H$ with a non-separating essential disk in $H$, with the same property in $C$ and $M$. So, we construct a non-separating embedding in $M$ for each orientable, and non-orientable, compact surface with boundary. Now, let $D_2$ be a essential separating disk in $H$ and $M$. Then we can construct embeddings of surfaces in $H$, and hence in $C$ and $M$, following section \ref{separating case} and the discussion in this section. As the starting disk is separating in $M$ we construct a separating embedding for all orientable, compact surfaces with boundary in $M$.  By Lemma \ref{separating} these embeddings are $\pi_1$-injective. This completes the proof of Theorem \ref{all surfaces in M}.\\
\end{proof}

\begin{proof}[Proof of Theorem \ref{sep orientable in M}]
Consider the compressible boundary components of $M$ with genus greater than or equal to two. If one of these components is of type \emph{ns} then we have Theorem \ref{all surfaces in M} and our result follows. Otherwise no boundary component is of type \emph{ns}. So all compressing disks $D$ of a boundary component $B$ in $M$ are such that $\partial D$ separates $B$. Fix a choice of compressing disk $D$ of a compressible boundary component $B$ of $M$. As before, let $C$ denote the maximal compression body defined by $B$ in $M$. As $B$ is not of type \emph{ns}, the compression body $C$ has the decomposition given by  Lemma \ref{compression body decomposition} (c). The disk $D$ is dual to a 1-handle in a 1-handle decomposition of $C$ and $D$ is a compressing disk of $\partial_+C$ so we can adjust the decomposition such that $D$ is the separating disk in $H$ from Lemma \ref{compression body decomposition} (c). We follow section \ref{separating case} and the discussion in this section to construct the embeddings of orientable surfaces in $H$. Although the surfaces constructed in this way are separating in $H$ and $C$ they are not necessarily separating in $M$. We have two cases. If $D$ is (non-) separating in $M$ then the surfaces are (resp., non-) separating in $M$. By Proposition \ref{separating} these embeddings are $\pi_1$-injective. Since the choice of $D$ was arbitrary, this completes the proof of Theorem \ref{sep orientable in M}.
\end{proof}

\vspace{1cm}
Jo\~ao Miguel Nogueira\\
Department of Mathematics, University of Texas at Austin, USA\\
CMUC, Department of Mathematics, University of Coimbra, Portugal\\ 
Email: jnogueira@math.utexas.edu, nogueira@mat.uc.pt\vspace{0.5cm}\\
Henry Segerman\\
Department of Mathematics and Statistics, The University of Melbourne, VIC 3010, Australia\\
Email: segerman@unimelb.edu.au\\


\begin{thebibliography}{99}
\bibitem{Eudave}
M. Eudave-Mu\~noz, Incompressible surfaces in tunnel number one knot complements, Topology and its Applications 98 (1999), 167-189.
\bibitem{Howards}
H. N. Howards, Generating disjoint incompressible surfaces,
preprint, 1-18.
\bibitem{Jaco}
W. Jaco, Lectures on three-manifold topology, CMBS 43 by Amer. Math. Soc., Providence, Rhode Island, 1997 reprint.  
\bibitem{Qiu 1}
R. Qiu, Incompressible surfaces in handlebodies and closed 3-manifolds of Heegaard genus 2,
Proc. Amer. Math. Soc. Vol. 128 N. 10 (2000), 3091-3097.
\bibitem{Qiu 2}
R. Qiu, Incompressible surfaces in $\partial$-reducible 3-manifolds,
unpublished, 1-13.
\end{thebibliography}
\end{document}